\DeclareMathOperator{\divg}{div}
\DeclareMathOperator{\Rc}{Rc}
\newcommand{\ddt}[1]{\frac{\partial #1}{\partial t}}
\newcommand{\ov}[1]{\overline{#1}}
\begin{document}

\theoremstyle{definition}
\newtheorem{claim}{Claim}
\theoremstyle{plain}
\newtheorem{proposition}{Proposition}[section]
\newtheorem{theorem}[proposition]{Theorem}
\newtheorem{lemma}[proposition]{Lemma}
\newtheorem{corollary}[proposition]{Corollary}
\theoremstyle{definition}
\newtheorem{defn}[proposition]{Definition}
\theoremstyle{remark}
\newtheorem{remark}[proposition]{Remark}
\theoremstyle{definition}
\newtheorem{example}[proposition]{Example}
\theoremstyle{definition}
\newtheorem*{Motivation}{Motivation}

\newcommand{\Addresses}{{
  \footnotesize
 
  \par\nopagebreak
  \textsc{Department of Mathematics, McGill University, Montreal, Canada }\par\nopagebreak
  \textit{E-mail address}: \texttt{kuan-hui.lee@mcgill.ca}

}}

\title{Stability of Hyperkähler Flow}

\author{KUAN-HUI LEE}
\date{}

\begin{abstract}
In this work, we discuss the stability of Donaldson's flow of surfaces in a hyperkähler 4-manifold. In \cite{WT2}, Wang and Tsai proved a uniqueness theorem and $C^1$ dynamic stability theorem of the mean curvature flow for minimal surface. We extend their results and obtain a similar dynamic stability thoerem of the hyperkähler flow.
\end{abstract}

\maketitle
\Addresses
\section{Introduction}

Mean curvature flow is a natural evolution equation in extrinsic geometry and shares many features with Hamilton’s Ricci flow \cite{CK,10.4310/jdg/1214436922} from intrinsic geometry. Mean curvature flow and its variants have striking applications in geometry, topology, and general relativity. In the codimension-one case, Huisken obtained several convergence results \cite{H1,H2}. For higher codimension, the analysis becomes more difficult. Although some convergence results have been established \cite{W1,W2,W3}, they often require additional geometric structures to proceed. One particularly interesting setting is the Lagrangian condition. In \cite{S}, it was shown that the Lagrangian condition is preserved under the mean curvature flow, and several convergence results were also obtained \cite{10.1007/978-3-642-22842-1_9}. Motivated by the mean curvature flow, in this paper we study a different type of geometric flow, which we introduce below.

 In 1999, Donaldson \cite{D} used the moment map and diffeomorphism to construct some geometric evolution equations. In particular, one of the geometric evolution equation in hyperk\"{a}hler 4-manifolds case is similar to the mean curvature flow. Let $S$ be a Riemann surface with volume form $\rho$ and $(M,\overline{g},I,J,K)$ be a hyperk\"{a}hler 4-manifold with three K\"{a}hler forms $\overline{\omega}_1,\overline{\omega_2},\overline{\omega}_3$. The Donaldson's flow in hyperk\"{a}hler 4-manifolds (called $H$-flow) is given by
\begin{align*}           
    \frac{\partial f}{\partial t}=If_\star(\xi_1)+Jf_\star(\xi_2)+Kf_\star(\xi_3), \quad \textrm{ $f:S\to M$ is an immersion,}
\end{align*} 
 where $\xi_i$ is the Hamiltonian vector field on $S$ with respect to $\frac{f^\star(\overline{\omega}_i)}{\rho}$, $i=1,2,3$.
 
 Donaldson \cite{D}, Song and Weinkove \cite{SW} found that if we define $\lambda=\frac{d\mu}{\rho}$, where $d\mu$ is the induced volume form. Then $H$-flow can be written as 
 \begin{align*}
     \frac{\partial f}{\partial t}=\lambda\nabla\lambda+\lambda^2 H,
 \end{align*}
 where $H$ is the mean curvature vector. Besides, the $H-$flow can be viewed as the gradient flow of the hyperk\"{a}ler energy
 \begin{align}
     E(f)=\int_S \lambda^2 \rho. \label{E}
 \end{align}
 
Although the H-flow resembles the mean curvature flow, it does not generally satisfy the same long-time existence conditions. However, we do have the following regularity result:
 \begin{theorem}
Let $f_t$ be a solution of the $H$-flow on $[0,T)$, for $0\leq T\leq \infty$. Suppose that $\lambda$ and $|\nabla^k\lambda|$ are uniformly bounded on $f_t(S)$ for any positive integer $k$.
If there exists a constant  $\alpha_0$ such that 
\begin{align*}
    \sup_{f_t(S)}|A|^2\leq \alpha_0 \quad \textrm{for $t\in[0,T)$ }.
\end{align*}
Then there exists $\alpha_k$ such that
\begin{align*}
    \sup_{f_t(S)}|\overline{\nabla}^kA|^2\leq \alpha_k \quad \textrm{for $t\in[0,T)$ }.
\end{align*} Also, if $T<\infty$, then
\begin{align*}
    \lim_{t\to T}\sup_{f_t(S)}|A|^2=\infty.
\end{align*}

 \end{theorem}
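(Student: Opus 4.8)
The plan is to follow Huisken's higher‑order smoothing estimates for mean curvature flow, adapted to the $\lambda^{2}$‑weighted parabolic structure of the $H$‑flow. Writing the equation as $\partial_t f=\lambda^{2}H+\lambda\nabla\lambda$, the term $\lambda^{2}H$ furnishes a second‑order operator whose principal part on tensors is $\lambda^{2}\Delta$, while the drift $\lambda\nabla\lambda$ is a first‑order transport term with coefficients controlled by the assumed bounds on $\lambda$ and $\nabla\lambda$. First I would record the evolution equations: using the standard variation formulas for the induced metric and for $A$, together with the commutation identities for interchanging $\partial_t$ and $\overline{\nabla}$ (which produce ambient curvature terms, bounded since $M$ is hyperkähler), I would establish by induction a Simons‑type identity
\[
\left(\partial_t-\lambda^{2}\Delta\right)\overline{\nabla}^{m}A=P_m,
\]
where $P_m$ is a universal contraction of products $\overline{\nabla}^{i_1}A*\cdots*\overline{\nabla}^{i_r}A$ with $i_1+\cdots+i_r\le m$, multiplied by derivatives of $\lambda$ up to order $m$ and by ambient curvature. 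The essential feature to keep track of is the power of $\lambda$ carried by each summand: the cubic reaction inherited from the mean‑curvature part carries a factor $\lambda^{2}$, whereas terms produced by differentiating the coefficient $\lambda^{2}$ or by the drift carry fewer powers of $\lambda$ but come weighted by (bounded) derivatives of $\lambda$.

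From this identity I obtain the evolution inequality for $|\overline{\nabla}^{m}A|^{2}$, whose crucial ingredient is the good gradient term $-2\lambda^{2}|\overline{\nabla}^{m+1}A|^{2}$. I then argue by induction on $m$: the base case $|A|^{2}\le\alpha_0$ is the hypothesis, and, assuming $|\overline{\nabla}^{j}A|^{2}\le\alpha_j$ for $j<m$, I consider the test function $\Phi=|\overline{\nabla}^{m}A|^{2}+\Lambda\,|\overline{\nabla}^{m-1}A|^{2}$ for a large constant $\Lambda$. Computing $(\partial_t-\lambda^{2}\Delta)\Phi$, the lower‑order term contributes $-2\Lambda\lambda^{2}|\overline{\nabla}^{m}A|^{2}$, and I use Young's inequality to absorb into the good gradient terms every contribution involving $\overline{\nabla}^{m+1}A$ (or $\overline{\nabla}^{m}A$ with a $\lambda^{2}$ coefficient), leaving a reaction bounded by $C_1\lambda^{2}|\overline{\nabla}^{m}A|^{2}+C_2+C_3|\overline{\nabla}^{m}A|^{2}$, with the last term coming from the drift and the $\lambda$‑derivative factors. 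Choosing $\Lambda$ large relative to $C_1$ turns the $\lambda^{2}$‑weighted reaction into $-\lambda^{2}|\overline{\nabla}^{m}A|^{2}$, and, using that $\lambda$ stays bounded below by a positive constant so that $\lambda^{2}\Delta$ is uniformly parabolic, I reach $(\partial_t-\lambda^{2}\Delta)\Phi\le -c\,\Phi+C$. The maximum principle then yields the uniform bound $\Phi\le\max\{\sup_S\Phi(\cdot,0),\,C/c\}$, hence $|\overline{\nabla}^{m}A|^{2}\le\alpha_m$ on $[0,T)$, completing the induction.

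For the blow‑up criterion I argue by contradiction. If $T<\infty$ but $\sup_{f_t(S)}|A|^{2}$ does not tend to infinity, then it is bounded on $[0,T)$, so the hypotheses of the first part hold and all $|\overline{\nabla}^{m}A|^{2}$ are uniformly bounded up to $T$; combined with the bounds on $\lambda$ and its derivatives, this gives uniform bounds on $f_t$ and all of its spatial derivatives, so $f_t$ converges in $C^{\infty}$ to a smooth immersion $f_T$ as $t\to T$. Short‑time existence for the $H$‑flow (a quasilinear parabolic system) applied with initial datum $f_T$ extends the solution past $T$, contradicting the maximality of $[0,T)$. Therefore $\sup_{f_t(S)}|A|^{2}\to\infty$ as $t\to T$.

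The step I expect to be the main obstacle is the bookkeeping of the powers of $\lambda$ in $P_m$ and the resulting estimate for $\Phi$. In contrast to pure mean curvature flow, the dissipative term $-\lambda^{2}|\overline{\nabla}^{m+1}A|^{2}$ is itself weighted by $\lambda^{2}$, so it can only absorb reaction terms that also carry at least $\lambda^{2}$; the contributions generated by the drift $\lambda\nabla\lambda$ and by derivatives of the coefficient $\lambda^{2}$ carry fewer powers of $\lambda$ and must be dominated separately, using the assumed bounds on $\lambda,\,\nabla^{k}\lambda$ together with a uniform positive lower bound on $\lambda$. Verifying that these terms can indeed be controlled—and, just as importantly, that the constants $\alpha_m$ remain uniform in $t$ rather than degenerating as $t\to T$—is the technical heart of the argument.
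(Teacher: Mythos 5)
Your proposal follows essentially the same route as the paper: the paper derives the differential inequality $\partial_t|\nabla^kA|^2\le\lambda^2\Delta|\nabla^kA|^2+B(k)\sum_{a+b+c=k}|\nabla^aA||\nabla^bA||\nabla^cA||\nabla^kA|+C(k)$ and then simply defers to Huisken's induction (which is exactly your test function $|\overline{\nabla}^mA|^2+\Lambda|\overline{\nabla}^{m-1}A|^2$ plus the maximum principle), and its proof of the blow-up criterion is the identical contradiction argument via $C^\infty$-convergence to $f_T$ and short-time existence. You supply more detail than the paper does — in particular you correctly flag the bookkeeping of powers of $\lambda$ and the need for a positive lower bound on $\lambda$ to absorb the unweighted reaction terms, a point the paper glosses over entirely.
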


In general, we have to control the growth of $|\nabla^k\lambda|$ to get a long time existence. As a special case, Song and Weinkove \cite{SW} consider the differential form $\theta=\overline{\omega}_2+i\overline{\omega}_3$ and the space 
 \begin{align*}
     \mathcal{N}=\{f:S\to M| \textrm{ $f$ is an immersion and $f^\star(\theta)=\rho$}\}.
 \end{align*}
 They pointed out that 
 \begin{theorem}
 Let $f_t$ be a solution of the $H$-flow on $[0,T)$. If $f_0\in\mathcal{N}$, then $f_t\in\mathcal{N}$.
 \end{theorem}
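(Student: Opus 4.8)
The plan is to track the complex-valued function $H := f_t^\star\theta/\rho = h_2 + ih_3$, where $h_j := f_t^\star\overline{\omega}_j/\rho$, and to show that it is pinned at the constant value $1$ for all $t$; since $\theta = \overline{\omega}_2 + i\overline{\omega}_3$, this is exactly the assertion $f_t^\star\theta = \rho$, i.e. $f_t\in\mathcal N$. The starting point is the first-variation formula for pullbacks under a moving map: writing $v := \partial_t f_t$ for the velocity, one has $\partial_t(f_t^\star\theta) = f_t^\star(\iota_v d\theta) + d(f_t^\star\iota_v\theta)$. Because $(M,\overline g,I,J,K)$ is hyperkähler, each $\overline\omega_j$ is closed, so $d\theta = 0$ and the first term drops out, leaving $\partial_t(f_t^\star\theta) = d(f_t^\star\iota_v\theta)$.

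Next I would compute the $1$-form $f_t^\star\iota_v\theta$ explicitly. Using $\overline\omega_j(\cdot,\cdot) = \overline g(J_j\cdot,\cdot)$ (with $J_1=I,J_2=J,J_3=K$), the velocity $v = If_\star\xi_1 + Jf_\star\xi_2 + Kf_\star\xi_3$, the quaternionic relations among $I,J,K$, and the defining property $\iota_{\xi_j}\rho = dh_j$ of the Hamiltonian fields, a direct calculation collapses everything into
$$ f_t^\star\iota_v\theta = iH\,dh_1 - ih_1\,dH - g(\xi_2 + i\xi_3,\cdot), $$
where $g := f_t^\star\overline g$ is the induced metric and $\xi_2 + i\xi_3$ is the Hamiltonian field of $H$ (so $\iota_{\xi_2+i\xi_3}\rho = dH$). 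Taking $d$ and using that $\rho$ is $t$-independent gives the evolution equation
$$ (\partial_t H)\,\rho = 2i\,dH\wedge dh_1 - d\big(g(\xi_2+i\xi_3,\cdot)\big). $$

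The crucial structural observation is that every term on the right carries at least one derivative of $H$: the first is manifestly first order in $H$, and the second, after expressing $\xi_2+i\xi_3$ through $dH$ via $\rho$ and then $g$, is a second-order operator whose principal part is $-d(g(\xi_H,\cdot))/\rho$. Hence the constant $H\equiv 1$ is a fixed point, and setting $G := H-1 = (h_2-1)+ih_3$ produces a linear, homogeneous, second-order evolution equation $\partial_t G = \mathcal L_t G$ with no zeroth-order term and with coefficients built from the given smooth flow $f_t$. Equivalently, on $\mathcal N$ one has $dH=0$ and $\xi_2=\xi_3=0$, so the H-flow velocity is tangent to $\mathcal N$.

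To finish I would verify that $\mathcal L_t$ is forward parabolic: in a local frame with $\rho = dx\wedge dy$ its leading part is $g_{22}\partial_{xx}-2g_{12}\partial_{xy}+g_{11}\partial_{yy} = (\det g)\,\Delta_g$ to top order, which is elliptic of the sign of the induced-metric Laplacian — consistent with the H-flow being the gradient flow of $E(f)=\int_S\lambda^2\rho$. Since $f_0\in\mathcal N$ gives $G(0)=0$, uniqueness for the linear parabolic Cauchy problem on the closed surface $S$ forces $G\equiv 0$; alternatively a Grönwall argument on $\int_S(|h_2-1|^2+|h_3|^2)\rho$, after integrating the principal term by parts, shows this energy stays $0$. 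Either way $H\equiv 1$, so $f_t^\star\theta=\rho$ and $f_t\in\mathcal N$. I expect the main obstacle to be the middle step: correctly deploying the quaternionic identities and Hamiltonian sign conventions to confirm the stated cancellations, and above all pinning down the sign of the principal symbol so that the induced constraint evolution is genuinely forward parabolic rather than backward.
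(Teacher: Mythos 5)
Your argument is correct, but it takes a genuinely different route from the paper. The paper (following Song--Weinkove) works with $\eta_a=f^\star\overline{\omega}_a/d\mu$, derives a pointwise evolution equation for the pullback of a general $2$-form via the extrinsic machinery --- second fundamental form, ambient curvature, and the Laplacian of a restricted $2$-form (Proposition 2.11) --- and then applies the maximum principle to $\eta_3$ and $\eta_2-\tfrac{1}{\lambda}$, which vanish at $t=0$ and satisfy homogeneous linear parabolic equations. You instead track $N_2+iN_3=f^\star\theta/\rho$ directly, and Cartan's formula together with $d\theta=0$ and the quaternionic relations does the work that the curvature computation does in the paper: with the conventions $\overline{\omega}_a(\cdot,\cdot)=\overline{g}(J_a\cdot,\cdot)$ and $\iota_{\xi_a}\rho=dN_a$ your identity $f_t^\star\iota_v\theta=iH\,dh_1-ih_1\,dH-g(\xi_2+i\xi_3,\cdot)$ checks out, and the principal part of $-d(g(\xi_H,\cdot))/\rho$ is $\det(g)\,g^{ij}\partial_i\partial_j=\lambda^2\Delta_g$, which matches the $\lambda^2\Delta\eta$ leading term of Proposition 2.11 and confirms forward parabolicity (the sign issue you flag is resolved precisely by the convention that makes $v=\lambda\nabla\lambda+\lambda^2H$ the descending gradient of $E$). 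Both proofs then close with the same analytic fact --- uniqueness for a homogeneous linear parabolic Cauchy problem with zero initial data, equivalently the maximum principle --- so you should say explicitly that $S$ is closed and that $\lambda$ has a positive lower bound on each compact subinterval of $[0,T)$ (it does, since $f_t$ is a smooth family of immersions of a compact surface), which gives the uniform ellipticity your uniqueness or Gr\"onwall step needs. What your route buys is a much lighter computation: the curvature and second-fundamental-form terms never appear because they cancel inside $d(f^\star\iota_v\theta)$, and the argument makes transparent why $\mathcal{N}$ is preserved (the velocity is built from the moment maps $N_a$, so the constraint evolves by an operator with no zeroth-order term). What the paper's route buys is the explicit evolution formula for a general $2$-form, which is reused later in the stability estimates for $*\Omega$; your softer argument would not supply that byproduct.
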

 They call $H$-flow with initial data in $\mathcal{N}$ the special $H$-flow and it is similar to the Lagrangian mean curvature flow. In this special case, one can see that if the norm of the second fundamental form is uniformly bounded, $|\nabla^k\lambda|$ are uniformly bounded too.   

 Recall that the mean curvature flow is the gradient flow of the volume functional. It is thus natural to ask whether a local minimizer of the volume functional is stable under the mean curvature flow. In a series of paper from Tsai and Wang \cite{WT2,WT3,WT1}, they identified a strong stability condition on minimal submanifolds that implies uniqueness and dynamical stability properties. More precisely, they pointed out that the Jacobi operator of the second variation of the volume functional is $(\nabla^\perp)^*\nabla^\perp+\mathcal{R}-\mathcal{A}$, where $(\nabla^\perp)^*\nabla^\perp$ is the Bochner Laplacian of the normal bundle, $\mathcal{R}$ is an operator constructed from the restriction of the ambient Riemann curvature, and $\mathcal{A}$ is constructed from the second fundamental form. A minimal submanifold is said to be strongly stable if $\mathcal{R-A}$ is a positive operator. Suppose that the submanifold $\Gamma$ is $C^1$ close to the minimal, strongly stable surface $\Sigma$ then the mean curvature flow of $\Gamma$ will exist for all time and converge to $\Sigma$ smoothly. Moreover, they found that the zero section of the Atiyah-Hitchin manifold \cite{AH2} \cite{AH1} is strongly stable, allowing them to apply their stability result.

 In our case, we prove the dynamically stable results of the special $H$-flow. First, we show that strongly stable minimal surfaces with constant $\lambda$ are critical points of the hyperk\"{a}ler energy (\ref{E}) with negative second variation. Next, we observe that any complex Lagrangian surface in a hyperkähler 4-manifold is special Lagrangian and also minimal. This leads us to our main theorem:

 \begin{theorem}
Let $(M,\overline{g},I,J,K)$ be a hyperk\"{a}ler 4-manifold, $\Sigma\subset M$ be a compact, oriented, strongly stable complex Lagrangian surface with respect to $I,K$ ,i.e., $\omega_I+i\omega_K\equiv 0$ on $\Sigma$ and $\Gamma\subset M$ be a Lagranigan surface with respect to $K$ which is $C^1$ close to $\Sigma$. Consider 
\begin{align*}
    \mathcal{N}=\{f:\Gamma\to M| f^\star(\omega_J)=\rho, f^\star(\omega_K)=0 \textrm{ where $\rho$ is a given volume form on $\Gamma$}\},
\end{align*}
the special $H$-flow $f_t(\Gamma)=\Gamma^t\in \mathcal{N}$ with $f_0(\Gamma)=\Gamma$ exists for all time and converge to $\Sigma$ smoothly. 
\end{theorem}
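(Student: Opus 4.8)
The plan is to adapt the Tsai--Wang stability scheme to the special $H$-flow by reducing the geometric evolution to a parabolic equation over the fixed background $\Sigma$ and using strong stability as a spectral gap. First I would record the structure of $\Sigma$: since $\omega_I+i\omega_K\equiv 0$ on $\Sigma$, the surface is Lagrangian for both $I$ and $K$, hence $J$-holomorphic, and therefore calibrated by $\omega_J$, minimal, and --- after normalizing $\rho=\omega_J|_\Sigma$ --- of constant $\lambda\equiv 1$. By a Weinstein-type tubular neighborhood for the symplectic form $\omega_K$, surfaces $C^1$-close to $\Sigma$ that are Lagrangian for $K$ are graphs of closed $1$-forms over $\Sigma$ and hence, as in Lagrangian mean curvature flow, are parametrized by a scalar potential $u$; the additional condition $f^\star\omega_J=\rho$ in $\mathcal{N}$ is compatible with this parametrization, holding automatically to leading order because $\Sigma$ is $\omega_J$-calibrated. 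Thus the special $H$-flow reduces near $\Sigma$ to a quasilinear parabolic equation $\partial_t u=Q(u,\nabla u,\nabla^2 u)$ with $Q(0)=0$, since $\Sigma$ is stationary.

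Next I would linearize at $u=0$ and fix the sign of the second variation. The decisive elementary point is the Wirtinger inequality on $\mathcal{N}$: for every $f\in\mathcal{N}$ one has $\rho=f^\star\omega_J\le d\mu$ pointwise, hence $\lambda\ge 1$ and $E(f)=\int_S\lambda^2\rho\ge\int_S\rho$, with equality iff the image is again $J$-holomorphic. Since $\Sigma$ realizes equality, it is a strict local minimizer of $E$ inside $\mathcal{N}$, in sharp contrast to the unconstrained problem. Writing the linearization as $\partial_t u=-Lu+O(|u|^2)$, the operator $L$ agrees, after the scalar reduction, with the Jacobi operator of the second variation restricted to $\mathcal{N}$, and the strong stability hypothesis $\mathcal{R}-\mathcal{A}>0$ upgrades the minimization to a uniform spectral gap $L\ge\delta>0$ on the admissible variations (in particular removing the Jacobi-field degeneracy). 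This coercivity is the engine of the argument.

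With coercivity in hand I would run a continuity/energy argument. Combining $L\ge\delta$ with the monotonicity identity $\frac{d}{dt}E(f_t)=-2\int_S\bigl(\lambda^2|\nabla\lambda|^2+\lambda^4|H|^2\bigr)\rho\le 0$, I would derive exponential decay of $E(f_t)-E(\Sigma)$ and of a suitable norm of $u$, so that an initially $C^1$-small solution stays $C^1$-small and contracts toward $\Sigma$. Closeness in $C^1$ keeps $|A|^2$ bounded; feeding this into the regularity result (Theorem 1), together with the special-case fact that a uniform bound on $|A|$ controls every $|\nabla^k\lambda|$, yields uniform bounds on $|\overline{\nabla}^kA|$ for all $k$. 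Consequently $\sup|A|^2$ cannot blow up, so by the blow-up criterion in Theorem 1 the flow exists for all $t\in[0,\infty)$. The uniform derivative bounds give smooth subconvergence along some $t_i\to\infty$, and the exponential decay of $u$ promotes this to smooth convergence of the entire flow to $\Sigma$.

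The step I expect to be the main obstacle is the nonlinear a priori estimate that preserves $C^1$-closeness and keeps $|A|^2$ bounded for all time. Unlike pure mean curvature flow, the $H$-flow velocity $\lambda\nabla\lambda+\lambda^2H$ carries the extra factor $\lambda$ and its derivatives, so the quasilinear error terms in $Q$ and in the evolution of $|A|^2$ are genuinely more involved, and controlling $\lambda,\nabla\lambda$ and the higher $\nabla^k\lambda$ along the flow is delicate. This is exactly where remaining in $\mathcal{N}$ is essential: the constraint ties $\lambda$ to the second fundamental form, which both closes the bootstrap and makes Theorem 1 applicable. The heart of the proof is to balance these error terms against the spectral gap $\delta$ from strong stability, so that the coercive exponential decay dominates the nonlinearity uniformly in time.
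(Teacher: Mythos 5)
Your proposal correctly identifies the right structural facts (a complex Lagrangian $\Sigma$ with $\omega_I+i\omega_K\equiv 0$ is $J$-holomorphic, hence $\omega_J$-calibrated, minimal, with $\lambda\equiv 1$; the Wirtinger bound $E(f)\ge\int_S\rho$ on $\mathcal{N}$ is a nice observation the paper does not even use), but the argument has a genuine gap at its load-bearing step. You write that ``closeness in $C^1$ keeps $|A|^2$ bounded.'' This is false: $|A|$ involves second derivatives of the immersion, and no $C^1$ bound controls it. Establishing that $|A|^2$ stays bounded along the flow is precisely the content of the paper's long-time-existence section, and it is obtained not from a spectral gap but from a pointwise maximum-principle argument: one introduces the parallel-transported volume form $\Omega$ of $\Sigma$, sets $\eta=(*\Omega)^p-K'\psi$ with $\psi$ the squared distance to $\Sigma$, shows $\partial_t\eta\ge\lambda^2[\Delta\eta+\tfrac{p}{3}\eta|\mathrm{II}^t-\mathrm{II}^\Sigma|^2]$, and then applies the maximum principle to $\eta^{-1}|\mathrm{II}^t|^2$ so that the reaction term $\tfrac{p}{3}|\mathrm{II}^t-\mathrm{II}^\Sigma|^2|\mathrm{II}^t|^2$ absorbs the $|\mathrm{II}^t|^4$ term in the evolution of $|\mathrm{II}^t|^2$. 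Nothing in your outline supplies a substitute for this mechanism; without it, the appeal to the regularity theorem and the blow-up criterion is circular.

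The same issue undermines the spectral-gap engine. A coercive linearization yields exponential decay only while the solution remains in a neighborhood (at least $C^2$, in practice $C^{2,\alpha}$) where the quadratic error in $Q(u,\nabla u,\nabla^2 u)$ is dominated by $\delta\|u\|^2$; with only $C^1$-small initial data you cannot enter that regime without first proving the $|A|$ bound you are trying to deduce. Likewise, energy monotonicity $\tfrac{d}{dt}E=-2\int(\lambda^2|\nabla\lambda|^2+\lambda^4|H|^2)\rho$ together with $E\ge\int\rho$ only gives space-time integrability of $|H|^2$, which identifies no limit and gives no rate; pointwise $C^0$ and $C^1$ decay in the paper come instead from maximum-principle inequalities for $\psi$ and $1-(*\Omega)+K\psi$, where strong stability enters through the lower bound $\mathrm{tr}_L\mathrm{Hess}(\psi)\ge c(s^2+\psi)$ rather than through a spectral gap of a Jacobi operator. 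Finally, the smooth convergence in the paper is extracted from integral estimates ($\int_0^\infty\int_{\Gamma_t}|\mathrm{II}^t-\mathrm{II}^\Sigma|^2\,d\mu_t\,dt<\infty$ plus a bounded time derivative), not from exponential decay of a potential $u$. In short, your plan defers exactly the steps that constitute the proof, and the one bridge you assert between them does not hold.
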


As an example, we note that when $M$ is the total space of the cotangent bundle of a sphere, $T^* S^n$ (for $n > 1$), with the Stenzel metric \cite{St}, the zero section is strongly stable \cite{WT2}. Thus, we focus on the 4-dimensional case where the Stenzal metric is called the Eguchi-Hanson metric \cite{EH}. By computing the curvature carefully, we observe that the zero section of Eguchi-Hanson metric is not only strongly stable but also a complex Lagrangian surface. 

\begin{corollary}
Let $\Gamma$ be a Lagrangian surface with respect to $K$ in the Eguchi-Hanson space, which is $C^1$ close to the zero section $S^2$. Then the special H-flow $\Gamma^t$ with $\Gamma^0=\Gamma$ exists for all time and converges to $S^2$ smoothly.
\end{corollary}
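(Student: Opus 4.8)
The plan is to derive the corollary as a direct specialization of the main Theorem, taking $M$ to be the Eguchi--Hanson space $T^*S^2$ with its hyperk\"ahler metric and $\Sigma$ to be the zero section $S^2$. Since $\Gamma$ is assumed Lagrangian with respect to $K$ and $C^1$ close to the zero section, all that remains is to verify the three standing hypotheses of the theorem for the zero section: that it is a compact, oriented surface, that it is strongly stable, and that it is complex Lagrangian with respect to $I,K$, i.e. $\omega_I+i\omega_K\equiv 0$ on it. Compactness and orientability of $S^2\subset T^*S^2$ are immediate, so the substance is in the remaining two conditions.

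First I would write the Eguchi--Hanson metric in the standard radial form using the left-invariant coframe $\sigma_1,\sigma_2,\sigma_3$ on $SU(2)$, namely $g=f(r)^{-1}dr^2+\tfrac{r^2}{4}(\sigma_1^2+\sigma_2^2)+\tfrac{r^2}{4}f(r)\sigma_3^2$ with $f(r)=1-(a/r)^4$, and record the associated orthonormal coframe $e^0=f^{-1/2}dr$, $e^1=\tfrac r2\sigma_1$, $e^2=\tfrac r2\sigma_2$, $e^3=\tfrac r2 f^{1/2}\sigma_3$. The hyperk\"ahler triple is then $\omega_1=e^0\wedge e^1+e^2\wedge e^3$, $\omega_2=e^0\wedge e^2+e^3\wedge e^1$, $\omega_3=e^0\wedge e^3+e^1\wedge e^2$, and I would fix the labelling of $I,J,K$ so that the two forms degenerating on the zero section are precisely $\omega_I$ and $\omega_K$.

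The core computation is to restrict this triple to $\Sigma=\{r=a\}$, the locus where $f(a)=0$ and the $\sigma_3$-circle collapses to the base of the Hopf fibration. The clean term is $\omega_3=\tfrac r2\,dr\wedge\sigma_3+\tfrac{r^2}{4}\sigma_1\wedge\sigma_2$, which is manifestly smooth and restricts on $\{r=a\}$ (where $dr$ annihilates tangent vectors) to $\tfrac{a^2}{4}\sigma_1\wedge\sigma_2$, the area form of $S^2$. For $\omega_1$ and $\omega_2$ the naive expressions carry factors $f^{-1/2}$ that blow up as $r\to a$, so the restriction is an indeterminate $\infty\cdot 0$; the resolution is that the Eguchi--Hanson triple is globally smooth on $T^*S^2$, and in a smooth radial coordinate centred on the zero section both $dr$ and the collapsing $\sigma_3$-direction are normal to $\Sigma$, so $\omega_1$ and $\omega_2$ genuinely pull back to zero. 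This yields $\omega_I+i\omega_K\equiv 0$ on $\Sigma$ with $\omega_J$ restricting to the volume form, i.e. $\Sigma$ is complex Lagrangian, hence (as remarked before the main theorem) special Lagrangian and minimal. The main obstacle is exactly this degeneration: the coframe $e^0,e^3$ is singular precisely along $\Sigma$, so I must pass to manifestly smooth coordinates to justify the vanishing rigorously, and I must pin down the sign and ordering conventions relating $(I,J,K)$ to $(\omega_1,\omega_2,\omega_3)$ so that the two vanishing forms are $\omega_I$ and $\omega_K$ rather than another pair.

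Finally, strong stability of the zero section is exactly the $n=2$ case of the Tsai--Wang result for $T^*S^n$ with the Stenzel metric \cite{WT2,St}, which in dimension four is the Eguchi--Hanson metric \cite{EH}; it can also be re-derived directly from the curvature of the coframe above. With compactness, orientability, strong stability, and the complex Lagrangian condition all verified, the main Theorem applies verbatim with $\Sigma=S^2$, giving long-time existence of the special $H$-flow $\Gamma^t$ with $\Gamma^0=\Gamma$ and smooth convergence to the zero section, which is precisely the claim.
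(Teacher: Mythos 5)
Your proposal is correct and follows essentially the same route as the paper: the corollary is obtained by specializing the main theorem to $\Sigma = S^2 \subset T^*S^2$, with the complex Lagrangian property and strong stability of the zero section verified exactly as in the paper's Section 3.2 (Cartan moving frames for the Eguchi--Hanson metric, restriction of the K\"ahler triple across the coordinate degeneration at $r^4=c$, and the curvature computation $R_{0110}=R_{0220}=R_{1331}=R_{2332}=-2/\sqrt{c}$ together with total geodesy feeding into the strong stability criterion). Your explicit attention to the $(I,J,K)$ labeling --- in the paper's conventions it is $\omega_J$ and $\omega_K$ that vanish on the zero section while $\omega_I$ restricts to the area form, so a permutation is needed to match the theorem's statement --- addresses a point the paper passes over silently.
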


This paper is organized as follows. In section 2, we discuss some basic properties of the $H-$flow. In section 3, we discuss strongly stable surface and the Eguchi-Hanson metric. In the last section, we prove the main result.

\textbf{Acknowledgements:} This work is written when the author is a math master student at National Taiwan university. I am grateful to my advisor Mao-Pei Tsui, professor Chung-Jun Tsai, professor Mu-Tao Wang. Their suggestions play an important role in this work.  

\section{Hyperkähler Flow}

\subsection{Introduction}

Let $S$ be a Riemann surface with volume form $\rho$ (symplectic form) and $(M, \ov{g}, I, J,K)$
is a hyperk\"{a}hler 4-manifold. 

\begin{defn}\label{D2.1}
Consider an immersion $f:S \rightarrow M$ and for $a=1,2,3$, we define 
\begin{itemize}
    \item $N_a = \frac{f^*(\ov{\omega}_a)}{\rho},$ where $\overline{w}_a$ is the K\"{a}hler forms with respect to $I,J,K$.
    \item $\xi_a$ is the Hamiltonian vector field  w.r.t $N_a$ ,i.e, $dN_a(\cdot)=\rho(\xi_a,\cdot)$.
\end{itemize}

The hyperk\"{a}hler mean curvature flow of Donaldson flow (in short $H$-flow) is given by
\begin{align}           
    \frac{\partial f}{\partial t}=If_\star(\xi_1)+Jf_\star(\xi_2)+Kf_\star(\xi_3).
\end{align}
\end{defn}

In the following, we use $i,j,k$ to denote 1 and 2, $\alpha$,$\beta$,$\gamma$ to denote 3 and 4, $A,B,C$ to denote 1,2,3,4. For any fixed point $p\in S$, let $\overline\nabla$ and $\nabla$ denote the Levi-Civita connections for $\overline{g}$ and $g=f^\star(\overline{g})$ respectively. We choose a normal coordinate system $\{x^1,x^2\}$ at $p$ with $\rho(\frac{\partial}{\partial x^1},\frac{\partial}{\partial x^2})=\rho(\partial_1,\partial_2)>0$
. Also, we select a normal coordinate system $\{y^1,y^2,y^3,y^4\}$ at $f(p)$ so that $\frac{\partial}{\partial y^i}=f_\star \partial_i$ i.e $\frac{\partial f^A}{\partial x^i}=\delta^A_i$ at $p$. Later, we also denote $\frac{\partial}{\partial y^i}$ by $\partial_i$. Then we can extend our normal coordinate to an local orthonormal basis $\{e_1,e_2\}$ for $Tf(S)$ and $\{e_3,e_4\}$ for $Nf(S)$. With respect to this orthonormal basis, we denote the second fundamatal form $A(\partial_i,\partial_j)=h_{\alpha ij} e_{\alpha}$ with 
\begin{align*}
    h_{\alpha ij}=\overline{g}(e_\alpha,\overline{\nabla}_{\partial_i}\partial_j),
\end{align*}
and the mean curvature vector at $p$ is given by $H=H_\alpha e_\alpha$ where $H_\alpha=g^{ij}h_{\alpha ij}$. Based on this coordinate system, Song and Weinkove rewrite the $H-$flow and their results are as follows. 

\begin{proposition}[\cite{SW}, Proposition 2.1]
Define a function $\lambda$ on S by $\lambda=\frac{d\mu}{\rho}$, where $d\mu$ is the induced volume form on $S$. 
The $H$-flow can be written as
\begin{align} \label{HF}
     \frac{\partial f}{\partial t}=\lambda \nabla\lambda+\lambda^2 H,
\end{align}
where $H$ is the mean curvature vector of $f(S)$ in $M$ and $\nabla$ is the connection of induced metric $g=f^\star(\overline{g})$ on S. Hence, the critical point of $f$ happens when $f(S)$ is a minimal surface and $\lambda$ is a constant. 
\end{proposition}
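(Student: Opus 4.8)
The plan is to verify the identity pointwise at an arbitrary $p\in S$ in the adapted orthonormal frame $\{e_1,e_2,e_3,e_4\}$ fixed above, splitting the evolution vector $V:=If_\star(\xi_1)+Jf_\star(\xi_2)+Kf_\star(\xi_3)$ into its tangential and normal parts and matching them to $\lambda\nabla\lambda$ and $\lambda^2 H$ respectively. The algebraic backbone is the pointwise identity $\lambda^2=N_1^2+N_2^2+N_3^2$. To get it, I would use that for any unit $u\in T_{f(p)}M$ the quadruple $\{u,Iu,Ju,Ku\}$ is an orthonormal basis; taking $u=e_1$ and writing $e_2=bIe_1+cJe_1+dKe_1$ (the $e_1$-component vanishes since $e_2\perp e_1$, and $|e_2|=1$ forces $b^2+c^2+d^2=1$), one finds $N_a=\lambda\,\overline{\omega}_a(e_1,e_2)$ at $p$ with $\overline{\omega}_1(e_1,e_2)=b$, $\overline{\omega}_2(e_1,e_2)=c$, $\overline{\omega}_3(e_1,e_2)=d$, so that $\sum_a N_a^2=\lambda^2(b^2+c^2+d^2)=\lambda^2$.

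For the tangential part I would compute $\overline{g}(V,f_\star\partial_j)$ directly. Since each $I_a$ is $\overline{g}$-compatible, $\overline{g}(I_af_\star\xi_a,f_\star\partial_j)=\overline{\omega}_a(f_\star\xi_a,f_\star\partial_j)$, and because both arguments are tangent to $S$ this equals $(f^\star\overline{\omega}_a)(\xi_a,\partial_j)=N_a\,\rho(\xi_a,\partial_j)=N_a\,\partial_j N_a$, using $f^\star\overline{\omega}_a=N_a\rho$ together with the defining relation $dN_a=\rho(\xi_a,\cdot)$ of the Hamiltonian field. Summing over $a$ and invoking the backbone identity gives $\overline{g}(V,f_\star\partial_j)=\tfrac12\,\partial_j\!\big(\sum_a N_a^2\big)=\lambda\,\partial_j\lambda$, so after raising the index with $g$ the tangential component is exactly $\lambda\nabla\lambda$.

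For the normal part I would test $V$ against a normal vector $e_\alpha$. In the chosen coordinates at $p$ the Hamiltonian relation yields $f_\star\xi_a=\lambda\big[(\partial_2 N_a)e_1-(\partial_1 N_a)e_2\big]$. The crucial structural input is that the Kähler forms are parallel, so differentiating $\overline{\omega}_a(f_\star\partial_1,f_\star\partial_2)$ turns $\partial_iN_a$ into contractions of $\overline{\omega}_a$ against the second fundamental form $h_{\beta ij}$ (the tangential Christoffel terms drop out at $p$). The pieces proportional to $\partial_i\lambda$ vanish because $\sum_a\overline{\omega}_a(e_1,e_2)\overline{\omega}_a$ is the self-dual projection of $e^1\wedge e^2$, which has no component pairing a tangent with a normal direction. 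The remainder is handled by the quaternionic contraction identity $\sum_a(\overline{\omega}_a)_{AB}(\overline{\omega}_a)_{CD}=\delta_{AC}\delta_{BD}-\delta_{AD}\delta_{BC}\pm\epsilon_{ABCD}$, valid since $\{\overline{\omega}_a/\sqrt2\}$ is an orthonormal basis of the (anti-)self-dual two-forms. After substituting, the two $\epsilon$-type terms cancel against one another, leaving $\overline{g}(V,e_\alpha)=\lambda^2(h_{\alpha 11}+h_{\alpha 22})=\lambda^2 H_\alpha$, i.e. $V^{\perp}=\lambda^2 H$.

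I expect this last, normal-component bookkeeping to be the main obstacle: one must set up the contraction identity with the correct index conventions and check that the orientation-dependent $\epsilon$-terms really cancel — a pleasant consequence being that the self-dual/anti-self-dual ambiguity becomes irrelevant. Assembling the two parts gives $\partial f/\partial t=V=\lambda\nabla\lambda+\lambda^2 H$. Finally, since $\lambda=d\mu/\rho>0$ and the tangential and normal pieces are orthogonal, $\partial f/\partial t=0$ forces $\nabla\lambda=0$ (so $\lambda$ is constant) and $H=0$ (so $f(S)$ is minimal), which is the stated characterization of critical points.
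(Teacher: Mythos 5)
The paper does not actually prove this proposition --- it is quoted verbatim from Song--Weinkove \cite{SW} with no argument supplied --- so there is nothing internal to compare against. Your derivation is correct and is essentially the standard one: the identity $\lambda^2=\sum_a N_a^2$ via the orthonormal quadruple $\{e_1,Ie_1,Je_1,Ke_1\}$, the tangential pairing $\overline{g}(V,f_\star\partial_j)=\sum_a N_a\,\partial_j N_a=\lambda\,\partial_j\lambda$, and the normal pairing via parallelism of the $\overline{\omega}_a$ together with $\sum_a(\overline{\omega}_a)_{AB}(\overline{\omega}_a)_{CD}=\delta_{AC}\delta_{BD}-\delta_{AD}\delta_{BC}\pm\epsilon_{ABCD}$ all check out; in particular the $\epsilon$-terms contribute $(h_{\beta 12}-h_{\beta 21})$-type combinations and cancel by symmetry of the second fundamental form, as you predicted. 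Two small points worth making explicit in a written-up version: when differentiating $N_a=\overline{\omega}_a(f_\star\partial_1,f_\star\partial_2)/\rho_{12}$ one must keep the $\partial_i\rho_{12}$ term, which is exactly the source of the $\overline{\omega}_a(e_1,e_2)\,\partial_i\lambda$ pieces you then discard via $\epsilon_{12i\alpha}=0$; and the overall sign of the final answer depends on the convention $\overline{\omega}_a(\cdot,\cdot)=\overline{g}(I_a\cdot,\cdot)$ versus $\overline{g}(\cdot,I_a\cdot)$, which should be fixed once at the outset. The closing remark on critical points (orthogonality of the tangential and normal parts forces $\nabla\lambda=0$ and $H=0$ separately, using $\lambda>0$) is also correct.
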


From (\ref{HF}), we see that $If_\star(\xi_1)+Jf_\star(\xi_2)+Kf_\star(\xi_3)$ can be decomposed to the normal part $\lambda^2 H$ and the tangential part $\lambda\nabla\lambda$ of $f(S)$ in $M$. Since this flow is similar to the mean curvature flow, we also derive a short-time existence. (See \cite{SW} and \cite{De} for more details.)

\begin{proposition}[\cite{SW}, Proposition 2.2]
Given any smooth initial map $f_0 \in \mathcal{M}$, there exists $T>0$ such that $ \frac{\partial f}{\partial t}=\lambda \nabla\lambda+\lambda^2 H$ admits a unique smooth solution $f_t \in \mathcal{M}$ for $t\in [0,T)$.
\end{proposition}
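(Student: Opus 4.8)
The plan is to read \eqref{HF} as a quasilinear second-order system for $f$ and to obtain short-time existence and uniqueness by DeTurck's trick together with the standard theory of strictly parabolic quasilinear systems. First I would record the classical identity $\lambda^2 H = \lambda^2\Delta_g f$, where $\Delta_g$ is the Laplace--Beltrami operator of the induced metric $g = f^\star(\ov{g})$ acting on the $M$-valued map $f$ (the tension field of an isometric immersion is its mean curvature vector, matching $H = H_\alpha e_\alpha$ with $H_\alpha = g^{ij}h_{\alpha ij}$). Thus the right-hand side
\begin{align*}
P(f) = \lambda\nabla\lambda + \lambda^2\Delta_g f
\end{align*}
is a second-order operator whose coefficients depend on $f$ and its first derivatives. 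Working near a fixed immersion $f_0$, I would write nearby maps as sections of $f_0^\star TM$ via the exponential map of $\ov{g}$ and set up the problem in parabolic Hölder spaces.

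The second step is to linearize $P$ at $f_0$ and compute its principal symbol $\sigma(\xi)$ acting on a variation $\phi\in\Gamma(f_0^\star TM)$. The term $\lambda^2\Delta_g f$ contributes the usual mean-curvature symbol $\lambda^2|\xi|_g^2\,\pi^\perp$, with $\pi^\perp$ the normal projection, which is degenerate on tangential directions; a direct computation using $\lambda^2 = \det g / w^2$ (writing $\rho = w\,dx^1\wedge dx^2$) shows that the tangential term $\lambda\nabla\lambda = \tfrac12\nabla(\lambda^2)$ contributes $\lambda^2\langle\phi,\xi^\sharp\rangle\,\xi^\sharp = \lambda^2|\xi|_g^2\,\pi_{\xi^\sharp}$, the projection onto the tangent line $\mathbb{R}\xi^\sharp$. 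Hence
\begin{align*}
\sigma(\xi) = \lambda^2|\xi|_g^2\,(\pi^\perp + \pi_{\xi^\sharp}),
\end{align*}
which is positive on the normal bundle and on $\mathbb{R}\xi^\sharp$ but vanishes on the tangent direction orthogonal to $\xi$. The flow is therefore only weakly parabolic, the single residual degeneracy being exactly the (volume-form-preserving) reparametrization invariance, and cannot be solved by parabolic theory directly.

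To remove this degeneracy I would apply DeTurck's trick. Fixing a background connection on $S$, add a tangential gauge term $f_\star W(f)$, with $W$ a reparametrization vector field of harmonic-map-tension type (the difference of the induced and background Christoffel symbols), chosen so that the modified operator $\tilde P(f) = P(f) + f_\star W(f)$ has positive-definite principal symbol and is therefore strictly parabolic. The modified flow $\partial_t\tilde f = \tilde P(\tilde f)$ with $\tilde f_0 = f_0$ then admits a unique smooth short-time solution by linear Schauder estimates combined with a contraction-mapping (or implicit-function) argument for quasilinear systems. I would recover a solution of \eqref{HF} by composing with the family of diffeomorphisms $\psi_t$ of $S$ generated by $-W(\tilde f_t)$ with $\psi_0 = \mathrm{id}$, and check that $f_t = \tilde f_t\circ\psi_t$ is an immersion solving the original equation; uniqueness for \eqref{HF} then follows by gauging any two solutions into the strictly parabolic modified flow and invoking its uniqueness.

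The main obstacle is the degeneracy of $\sigma(\xi)$ in the tangential direction together with the bookkeeping in the DeTurck reconstruction: one must verify that, after composing with $\psi_t$, the tangential part of the velocity is restored to exactly $\lambda\nabla\lambda$ rather than merely preserving the normal part $\lambda^2 H$, which is where the interaction between the fixed volume form $\rho$ and the reparametrizations $\psi_t$ requires care. Everything else is an application of the standard quasilinear parabolic package, as carried out in \cite{SW} and \cite{De}.
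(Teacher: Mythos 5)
The paper itself offers no proof of this proposition; it simply cites \cite{SW} and \cite{De}, so there is nothing internal to compare against beyond the indicated route (DeTurck's trick). Your reduction is sound as far as it goes: $\lambda^2 H$ has principal part $\lambda^2\Delta_g f$, the tangential term $\frac12\nabla(\lambda^2)$ contributes $\lambda^2|\xi|_g^2\,\pi_{\xi^\sharp}$ to the symbol, and the total symbol $\lambda^2|\xi|_g^2(\pi^\perp+\pi_{\xi^\sharp})$ degenerates only on the tangential direction $g$-orthogonal to $\xi^\sharp$ --- a one-dimensional kernel matching the invariance of $E(f)=\int_S\lambda^2\rho$ under $\rho$-preserving reparametrizations. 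All of that is correct and is surely the skeleton of the argument in \cite{SW}.

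The gap is in the gauge-fixing step, and it is precisely the point you flag as "the main obstacle" and then defer to the standard package: the standard package does not apply here. The DeTurck reconstruction $f_t=\tilde f_t\circ\psi_t$ requires the equivariance $P(f\circ\psi)=P(f)\circ\psi$, but for $P(f)=\lambda\nabla\lambda+\lambda^2H$ one has $\lambda(f\circ\psi)=(\lambda(f)\circ\psi)\cdot\frac{\psi^*\rho}{\rho}$, so equivariance holds only when $\psi^*\rho=\rho$. A harmonic-map-tension-type $W$ is not divergence-free with respect to $\rho$, the diffeomorphisms $\psi_t$ it generates do not preserve $\rho$, and the composed map fails to solve (\ref{HF}) --- not merely in its tangential part: even the normal component acquires the wrong factor $(\psi_t^*\rho/\rho)^2$ in front of $H$. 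The repair is to choose the gauge field inside the Lie algebra of the actual symmetry group, i.e.\ $W$ $\rho$-divergence-free (locally Hamiltonian, $\iota_W\rho=dh$ with $h$ first order in $f$); such a field has principal symbol pointing along the $\rho$-rotation of $\xi$, which is transverse to $\xi^\sharp$ and hence can fill exactly the one-dimensional kernel, but one must then re-verify that the resulting total symbol has spectrum in the right half-plane. Carrying out that choice and verification is the actual content of the short-time existence proof for this flow, and it is the one step your proposal leaves open.
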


\subsection{Hyperkähler Energy}

\begin{defn}
Adopt the same notation in the Definition \ref{D2.1}, we define the hyperk\"{a}hler energy by
\begin{align*}
    E(f)= \sum_{a=1}^3 \| N_a \|^2_{L^2(S, \rho)}=\int_S \lambda^2\rho,
\end{align*}    
where $\lambda =\frac{d\mu}{\rho}$.
\end{defn}

In \cite{D}, Donaldson shows that $H$-flow is the gradient flow of the hyperk\"{a}hler energy. In the following, we are going to compute the first and second variation formulas of the hyperkähler energy to justify this result.

\begin{proposition}
Let $T=df(\frac{\partial}{\partial t})$ be the variational field which is compactly support then 
\begin{align}
    \frac{d}{dt}\Big|_{t=0}E=\int_{S}-2\overline{g}(\lambda\nabla\lambda,T)-2\lambda^2 \overline{g}(T,H) \rho.
\end{align}
Thus, the gradient flow of hyperk\"{a}hler energy is $H$-flow.
\end{proposition}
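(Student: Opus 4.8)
The plan is to reduce everything to the first variation of the induced volume form. Since the reference form $\rho$ on $S$ does not depend on $t$, differentiating $E=\int_S \lambda^2\rho$ gives immediately $\frac{d}{dt}\Big|_{t=0}E=\int_S 2\lambda\,\partial_t\lambda\,\rho$, so the whole computation hinges on $\partial_t\lambda$. Because $\lambda\rho=d\mu$ with $\rho$ fixed, we have $\partial_t\lambda\,\rho=\partial_t(d\mu)$, and the problem becomes the classical first variation of area adapted to our normalization.

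To compute $\partial_t(d\mu)$ I would first differentiate the induced metric $g_{ij}=\overline{g}(f_\star\partial_i,f_\star\partial_j)$. Using $[\partial_t,\partial_i]=0$ and torsion-freeness of $\overline{\nabla}$ to exchange $\overline{\nabla}_T f_\star\partial_i=\overline{\nabla}_{\partial_i}T$, this yields $\partial_t g_{ij}=\overline{g}(\overline{\nabla}_{\partial_i}T,\partial_j)+\overline{g}(\partial_i,\overline{\nabla}_{\partial_j}T)$, hence $\partial_t(d\mu)=g^{ij}\overline{g}(\overline{\nabla}_{\partial_i}T,\partial_j)\,d\mu$. Then I would split $T=T^\top+T^\perp$: the tangential part contributes $\divg_S T^\top$, since the tangential projection of $\overline{\nabla}_{\partial_i}T^\top$ is the induced connection; for the normal part, differentiating $\overline{g}(T^\perp,\partial_j)=0$ along $\partial_i$ converts $g^{ij}\overline{g}(\overline{\nabla}_{\partial_i}T^\perp,\partial_j)$ into $-g^{ij}\overline{g}(T^\perp,\overline{\nabla}_{\partial_i}\partial_j)=-\overline{g}(T,H)$, using $h_{\alpha ij}=\overline{g}(e_\alpha,\overline{\nabla}_{\partial_i}\partial_j)$, $H_\alpha=g^{ij}h_{\alpha ij}$, and that $H$ is normal. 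This gives $\partial_t(d\mu)=(\divg_S T^\top-\overline{g}(T,H))\,d\mu$ and so $\partial_t\lambda=(\divg_S T^\top-\overline{g}(T,H))\lambda$.

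Substituting and writing $2\lambda^2\rho=2\lambda\,d\mu$, I would obtain $\frac{d}{dt}\big|_{t=0}E=\int_S 2\lambda(\divg_S T^\top-\overline{g}(T,H))\,d\mu$. The second term is already $-\int_S 2\lambda^2\overline{g}(T,H)\,\rho$, matching the stated formula. For the first term I would integrate by parts on $(S,g)$ via $\divg_S(2\lambda T^\top)=2\lambda\divg_S T^\top+2\overline{g}(\nabla\lambda,T^\top)$, where $\nabla\lambda$ is the gradient for the induced metric; compact support of $T$ kills the boundary term, leaving $\int_S 2\lambda\divg_S T^\top\,d\mu=-\int_S 2\overline{g}(\nabla\lambda,T)\,d\mu=-\int_S 2\overline{g}(\lambda\nabla\lambda,T)\,\rho$, using $\overline{g}(\nabla\lambda,T^\top)=\overline{g}(\nabla\lambda,T)$ since $\nabla\lambda$ is tangential and again $d\mu=\lambda\rho$. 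Combining the two pieces gives exactly the claimed variation, and reading off the $L^2(\rho)$-pairing shows the negative gradient flow is $\partial_t f=2(\lambda\nabla\lambda+\lambda^2 H)$, i.e. the $H$-flow (\ref{HF}) up to the harmless constant factor.

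The computation is essentially routine; the part requiring the most care will be the tangential–normal bookkeeping in the middle step—in particular matching the sign of the $\overline{g}(T,H)$ term to the paper's convention $h_{\alpha ij}=\overline{g}(e_\alpha,\overline{\nabla}_{\partial_i}\partial_j)$—and keeping the two meanings of ``gradient'' ($\nabla\lambda$ on $S$ versus the variational gradient of $E$) clearly distinct throughout.
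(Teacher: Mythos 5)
Your proposal is correct and follows essentially the same route as the paper: both reduce the problem to the first variation of the induced volume (the paper via $\partial_t\log\det g$, you via $\partial_t(d\mu)$, which is the same identity), split into the tangential divergence term and the $-\overline{g}(T,H)$ term, and integrate by parts using compact support. Your explicit remark about the overall factor of $2$ in identifying the gradient flow is a point the paper silently absorbs, but it does not change the argument.
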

\begin{proof}
In local coordinate, the hyperk\"{a}hler energy is given by
\begin{align*}
    E=\int_{f_t(S)}\frac{\det\big(g(x,t)\big)}{\rho_{12}(x)}dx_1\wedge dx_2, \quad \textrm{since  } \lambda=\frac{\sqrt{\det(g(x,t))}}{\rho_{12}(x)}.
\end{align*}
Acting on its variational field $T=df(\frac{\partial}{\partial t})$ at $t=0$, we derive
\begin{align*}
    \overline{\nabla}_T\big(\frac{\det(g)}{\rho_{12} }\big)=\big[2\divg(T^t)-2\overline{g}(T,H)\big]\frac{\det(g)}{\rho_{12} }
\end{align*}
and
\begin{align*}
    \frac{\partial}{\partial t}\big|_{t=0}\log(\det g)=2\divg(T^t)-2\overline{g}(T,H).
\end{align*}
Therefore, 
\begin{align*}
      \frac{d}{dt}\Big|_{t=0}E&=\int_{f_0(S)}\lambda \big[2\divg(T^t)-2\overline{g}(T,H)\big]dA_0
      \\&=\int_{f_0(S)} 2\divg(\lambda T^t)-2\overline{g}(\nabla\lambda,T^t)-2\lambda \overline{g}(T,H)dA_0
      \\&=\int_{f_0(S)}-2\overline{g}(\nabla\lambda,T^t)-2\lambda \overline{g}(T,H)dA_0
      \\&=\int_{S}-2\overline{g}(\lambda\nabla\lambda,T)-2\lambda^2 \overline{g}(T,H) \rho.
\end{align*}
    
\end{proof}

\begin{proposition}
Let $T=df(\frac{\partial}{\partial t})$ is a variational field normal to $f_0(S)$ and compactly support
\begin{align*}
     &\frac{d^2}{dt^2}\Big|_{t=0}E
     \\&\kern1em\nonumber=\int_{S}\lambda^2\big[-2\sum_{i,j}\overline{g}(T,A_{ij})^2+2\overline{g}(\overline{R}(T,e_i)T,e_i)-2\overline{g}(\overline{\nabla}_TT,H)+2\sum_{i=1}^2\sum_{\alpha=3}^4\overline{g}(\overline{\nabla}_{e_i}T,e_\alpha)^2+4\overline{g}(T,H)^2\big]\rho
     \\&\kern3em\nonumber -\int_{S}2\lambda\overline{g}(\nabla\lambda,\overline{\nabla}_TT) \rho,
\end{align*}
where $\overline{R}$ denote the curvature of ambient manifold $M$.
In particular, if $f_0(S)$ is at the critical point of energy ,i.e., $H\equiv 0$ and $\lambda$ is a constant. Then,
\begin{align}
     \frac{d^2}{dt^2}\Big|_{t=0}E&=\int_{S}\lambda^2\Big[-2\sum_{i,j}\overline{g}(T,A_{ij})^2+2\overline{g}(\overline{R}(T,e_i)T,e_i)+2\sum_{i=1}^2\sum_{\alpha=3}^4\overline{g}(\overline{\nabla}_{e_i}T,e_\alpha)^2\Big]\rho.
\end{align}
\end{proposition}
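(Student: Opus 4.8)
The plan is to keep $E$ in the local form $E=\int_S \frac{\det g(x,t)}{\rho_{12}(x)}\,dx^1\wedge dx^2$ used in the first variation and to differentiate twice in $t$, thereby reducing everything to the first two $t$-derivatives of the induced metric $g_{ij}=\overline{g}(f_\star\partial_i,f_\star\partial_j)$. Writing $w:=\partial_t\log\lambda=\tfrac12 g^{ij}\dot g_{ij}$ (a dot denoting $\partial_t$), we have $\lambda'=\lambda w$ and $\lambda''=\lambda w^2+\lambda w'$, so that
\[
\frac{d^2}{dt^2}E=\int_S\big(2(\lambda')^2+2\lambda\lambda''\big)\rho=\int_S\big(4\lambda^2 w^2+2\lambda^2 w'\big)\rho .
\]
Since $T$ is normal at $t=0$ the tangential part of $T$ vanishes there, so $w|_{t=0}=-\overline{g}(T,H)$ and the $4\lambda^2 w^2$ term contributes exactly $4\,\overline{g}(T,H)^2$. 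It then remains to compute $w'=-\tfrac12|\dot g|^2+\tfrac12 g^{ij}\ddot g_{ij}$ at $t=0$.

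Next I would compute $\dot g_{ij}$ and $\ddot g_{ij}$ at the base point in the chosen orthonormal frame. Using $\overline{\nabla}_t(f_\star\partial_i)=\overline{\nabla}_{\partial_i}T$ one finds $\dot g_{ij}=\overline{g}(\overline{\nabla}_{e_i}T,e_j)+\overline{g}(\overline{\nabla}_{e_j}T,e_i)$, and since $T$ is normal, $\overline{g}(\overline{\nabla}_{e_i}T,e_j)=-\overline{g}(T,A_{ij})$, giving $\dot g_{ij}=-2\,\overline{g}(T,A_{ij})$ and hence $-\tfrac12|\dot g|^2=-2\sum_{i,j}\overline{g}(T,A_{ij})^2$, the $\mathcal A$-term. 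For $\ddot g_{ij}$ the key input is the curvature commutation identity $\overline{\nabla}_t\overline{\nabla}_{\partial_i}T=\overline{\nabla}_{\partial_i}(\overline{\nabla}_T T)+\overline{R}(T,f_\star\partial_i)T$, together with $\overline{\nabla}_t T=\overline{\nabla}_T T$. This produces the ambient curvature term, an acceleration term built from $\overline{\nabla}_T T$, and the cross term $2\,\overline{g}(\overline{\nabla}_{e_i}T,\overline{\nabla}_{e_j}T)$.

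Tracing, $\tfrac12 g^{ij}\ddot g_{ij}$ yields $\sum_i\overline{g}(\overline{R}(T,e_i)T,e_i)$, the full squared derivative $\sum_i|\overline{\nabla}_{e_i}T|^2$, and a divergence $\sum_i\overline{g}(\overline{\nabla}_{e_i}(\overline{\nabla}_T T),e_i)$. I would split $\sum_i|\overline{\nabla}_{e_i}T|^2$ into its normal and tangential parts, the former being $\sum_{i,\alpha}\overline{g}(\overline{\nabla}_{e_i}T,e_\alpha)^2$ and the latter $\sum_{i,j}\overline{g}(T,A_{ij})^2$, which combines with $-\tfrac12|\dot g|^2$ to leave $-\sum_{i,j}\overline{g}(T,A_{ij})^2$. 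Applying the identity $\sum_i\overline{g}(\overline{\nabla}_{e_i}V,e_i)=\divg(V^t)-\overline{g}(V,H)$ with $V=\overline{\nabla}_T T$ isolates the $-\overline{g}(\overline{\nabla}_T T,H)$ term and a pure divergence. Finally, I would integrate the divergence term $\int_S 2\lambda^2\,\divg\big((\overline{\nabla}_T T)^t\big)\rho$ by parts: rewriting $\lambda^2\rho=\lambda\,d\mu$ and integrating against $d\mu$ (legitimate since $T$ is compactly supported) produces precisely $-\int_S 2\lambda\,\overline{g}(\nabla\lambda,\overline{\nabla}_T T)\rho$. Collecting all contributions gives the stated general formula, and setting $H\equiv0$ with $\lambda$ constant (so $\nabla\lambda=0$) removes the last three summands, yielding the critical-point formula.

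The main obstacle I anticipate is the second and third steps taken together: correctly carrying out the $\ddot g_{ij}$ computation through the curvature commutator, and above all the bookkeeping in the integration by parts. The single (rather than squared) power of $\lambda$ in the final term $-2\lambda\,\overline{g}(\nabla\lambda,\overline{\nabla}_T T)$ arises exactly from converting $\lambda^2\rho$ to $\lambda\,d\mu$ before integrating against the induced measure; keeping careful track of which measure one integrates against, and that the divergence is taken with respect to the induced metric $g$, is the delicate point on which closing the formula depends.
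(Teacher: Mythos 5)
Your proposal is correct and follows essentially the same route as the paper: both compute the second $t$-derivative of $\det g$ (you package it as $4\lambda^2w^2+2\lambda^2 w'$ with $w=\partial_t\log\lambda$, the paper expands $\partial_s\partial_t\det g$ for two variations and then sets $S=T$), both obtain the curvature term from the commutator $\overline{\nabla}_t\overline{\nabla}_{\partial_i}T=\overline{\nabla}_{\partial_i}\overline{\nabla}_TT+\overline{R}(T,f_\star\partial_i)T$, split $\sum_i|\overline{\nabla}_{e_i}T|^2$ into normal and tangential parts, and convert $\sum_i\overline{g}(\overline{\nabla}_{e_i}\overline{\nabla}_TT,e_i)$ into $\divg((\overline{\nabla}_TT)^t)-\overline{g}(\overline{\nabla}_TT,H)$ before integrating the divergence by parts against $d\mu=\lambda\rho$ to produce the $-2\lambda\overline{g}(\nabla\lambda,\overline{\nabla}_TT)$ term. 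The delicate points you flag (the measure bookkeeping and the single power of $\lambda$) are handled exactly as in the paper's proof.
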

\begin{proof}
 First, we consider two variational vectors $T=df(\frac{\partial}{\partial t})$ and $S=df(\frac{\partial}{\partial s})$
\begin{align*}
    \frac{\partial^2}{\partial s\partial t}\det\big(g(p,t,s)\big)&=\frac{\partial}{\partial s}\big(\det g g^{ij}\frac{\partial}{\partial t}g_{ij}\big)
    \\&=(\frac{\partial}{\partial s}g^{ij})(\frac{\partial}{\partial t}g_{ij})\det g+g^{ij}\det g (\frac{\partial^2}{\partial s\partial t}g_{ij})+\det g(g^{ij}\frac{\partial}{\partial s}g_{ij})(g^{kl}\frac{\partial}{\partial t}g_{kl}).
\end{align*}   
At $(p,0,0)$, $g_{ij}=\delta_{ij}$. So
\begin{align*}
    \frac{\partial^2}{\partial s\partial t}\Big|_{(0,0))}\det(g(p,t,s))=(\frac{\partial}{\partial s}g^{ij})(\frac{\partial}{\partial t}g_{ij})\det +\det g (\frac{\partial^2}{\partial s\partial t}g_{ii})+\det g(\frac{\partial}{\partial s}g_{ij})(\frac{\partial}{\partial t}g_{kl}).
\end{align*}   
We get 
\begin{align*}
    Sg^{ij}&=-g^{ik}Sg_{kl}g^{jl}=-g^{ik}(\overline{g}(\overline{\nabla}_{e_k}S,e_l)+\overline{g}(e_k,\overline{\nabla}_{e_l}S))g^{jl}
    \\&=-\overline{g}(\overline{\nabla}_{e_i}S,e_j)-\overline{g}(e_i,\overline{\nabla}_{e_j}S),
\end{align*}
\begin{align*}
    S\big(\overline{g}(\overline{\nabla}_{e_i}T,e_i)\big)&=\overline{g}(\overline{\nabla}_S\overline{\nabla}_{e_i}T,e_i)+\overline{g}(\overline{\nabla}_{e_i}T,\overline{\nabla}_{e_i}S)
    \\&=\overline{g}\big(R^N(S,e_i)T,e_i\big)+\overline{g}(\overline{\nabla}_{e_i}\overline{\nabla}_ST,e_i)+\overline{g}(\overline{\nabla}_{e_i}T,\overline{\nabla}_{e_i}S),
\end{align*}
and
\begin{align*}
    \frac{1}{4}Sg_{ii}Tg_{kk}=\overline{g}(\overline{\nabla}_{e_i}S,e_i)\overline{g}(\overline{\nabla}_{e_k}T,e_k).
\end{align*}
In conclusion, 
\begin{align*}
     \frac{\partial^2}{\partial s\partial t}\Big|_{(0,0)}\det\big(g(p,t,s)\big)&=-2(\overline{g}(\overline{\nabla}_{e_i}S,e_j)+\overline{g}(e_i,\overline{\nabla}_{e_j}S))\overline{g}(\overline{\nabla}_{e_i}T,e_j)+2\overline{g}(\overline{R}(S,e_i)T,e_i)
    \\&\kern1em+2\overline{g}(\overline{\nabla}_{e_i}\overline{\nabla}_ST,e_i)+2\overline{g}(\overline{\nabla}_{e_i}T,\overline{\nabla}_{e_i}S)+4\overline{g}(\overline{\nabla}_{e_i}S,e_i)\overline{g}(\overline{\nabla}_{e_k}T,e_k).
\end{align*}
Take $S=T$ and $T$ is normal to $f_0(S)$,
\begin{align*}
    \frac{\partial^2}{\partial t^2}\Big|_{t=0}\det(g(p,t))&=-2(\overline{g}(\overline{\nabla}_{e_i}T,e_j)+\overline{g}(e_i,\overline{\nabla}_{e_j}T))\overline{g}(\overline{\nabla}_{e_i}T,e_j)+2\overline{g}(\overline{R}(T,e_i)T,e_i)
    \\&\kern1em+2\overline{g}(\overline{\nabla}_{e_i}\overline{\nabla}_TT,e_i)+2\overline{g}(\overline{\nabla}_{e_i}T,\overline{\nabla}_{e_i}T)+4\overline{g}(\overline{\nabla}_{e_i}T,e_i)\overline{g}(\overline{\nabla}_{e_k}T,e_k)
    \\&=-4\sum_{i,j}\overline{g}(T,A_{ij})^2+2\overline{g}(\overline{R}(T,e_i)T,e_i)+2\divg(\overline{\nabla}_TT)^t-2\overline{g}(\overline{\nabla}_TT,H)
    \\&\kern1em+2\sum_{i=1}^2|\overline{\nabla}_{e_i}T|^2+4\overline{g}(T,H)^2.
\end{align*}
Note that $e_{3},e_4$ are normal vectors, we can compute that
\begin{align*}
    \sum_{i=1}^2|\overline{\nabla}_{e_i}T|^2=\sum_{i=1}^2[\sum_{j=1}^2\overline{g}(\overline{\nabla}_{e_i}T,e_j)^2+\sum_{\alpha=3}^4\overline{g}(\overline{\nabla}_{e_i}T,e_\alpha)^2]=\overline{g}(T,A_{ij})^2+  \sum_{i=1}^2\sum_{\alpha=3}^4\overline{g}(\overline{\nabla}_{e_i}T,e_\alpha)^2.
\end{align*}
Our results are followed by Stoke's theorem.
\end{proof}

\subsection{Evolution Formulas}

To discuss some properties of $H$-flow, we first derive some evolution formulas. Recall that $S$ is a Riemann surface with volume form $\rho$ and $(M, \ov{g}, I, J,K)$ is a hyperk\"{a}hler 4-manifold. The hyperk\"{a}hler flow is an immersion $f:S\to M$ evolved by 
\begin{align*}
    \frac{\partial}{\partial t}f=\lambda\nabla\lambda+\lambda^2 H,
\end{align*}
where $H$ is the mean curvature vector of $f(S)$ in $M$ and $\nabla$ is the connection of the induced metric $g=f^\star(\overline{g})$ on $S$. 
\begin{proposition}[\cite{SW}, Proposition 2.3] \label{P2.7} Along the $H$-flow,
\begin{align*}  
\frac{\partial}{\partial t}g_{kl}&=\nabla_k\nabla_l(\lambda^2)-2\lambda^2H_\alpha h_{\alpha kl},
\\ \ddt{} d\mu &= \left( \Delta ( \frac{\lambda^2}{2}
) - \lambda^2 |H|^2 \right) d\mu,
\\\ddt{} (\lambda^2) &= \lambda^2\left( \Delta ( \lambda^2
) - 2\lambda^2 |H|^2 \right).
\end{align*}
\end{proposition}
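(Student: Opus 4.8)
The plan is to regard the flow velocity as a single vector field along $f$,
\[
V=\frac{\partial f}{\partial t}=\nabla\Big(\tfrac{\lambda^2}{2}\Big)+\lambda^2 H,
\]
splitting it into the tangential part $V^\top=\nabla(\tfrac{\lambda^2}{2})$ (which is a genuine gradient, since $\lambda\nabla\lambda=\nabla(\tfrac{\lambda^2}{2})$) and the normal part $V^\perp=\lambda^2H$. I would then establish the three formulas in the order listed, each one feeding the next: first the metric, then its determinant $d\mu$, then $\lambda^2$.

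First I would compute $\partial_t g_{kl}$. Because the coordinate chart $\{x^1,x^2\}$ is held fixed in time, $\partial_t$ commutes with $\partial_k=\partial_{x^k}$ when applied to $f$, and torsion-freeness of $\overline\nabla$ gives $\overline\nabla_{\partial_k}V=\overline\nabla_{\partial_t}(f_\star\partial_k)$. Differentiating $g_{kl}=\overline g(f_\star\partial_k,f_\star\partial_l)$ therefore yields
\[
\partial_t g_{kl}=\overline g(\overline\nabla_{\partial_k}V,\partial_l)+\overline g(\partial_k,\overline\nabla_{\partial_l}V).
\]
For the tangential part, projecting $\overline\nabla_{\partial_k}\nabla(\tfrac{\lambda^2}{2})$ onto $\partial_l$ kills the normal (second fundamental form) contribution and leaves the Hessian $\nabla_k\nabla_l(\tfrac{\lambda^2}{2})$; symmetrizing in $k,l$ produces $\nabla_k\nabla_l(\lambda^2)$. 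For the normal part, since $H\perp\partial_l$ the factor $\partial_k(\lambda^2)H$ drops out, and the Weingarten relation $\overline g(\overline\nabla_{\partial_k}H,\partial_l)=-\overline g(H,A_{kl})=-H_\alpha h_{\alpha kl}$ gives, after symmetrizing via the symmetry of $h$, the term $-2\lambda^2H_\alpha h_{\alpha kl}$. Summing the two contributions is exactly the first formula.

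Next, writing $d\mu=\sqrt{\det g}\,dx$ and using $\partial_t\log\sqrt{\det g}=\tfrac{1}{2}g^{kl}\partial_t g_{kl}$, I get $\partial_t d\mu=\tfrac{1}{2}g^{kl}\,\partial_t g_{kl}\,d\mu$. Tracing the first formula with $g^{kl}$ converts $\nabla_k\nabla_l(\lambda^2)$ into $\Delta(\lambda^2)$ and converts $g^{kl}h_{\alpha kl}=H_\alpha$ into $|H|^2$, producing $\partial_t d\mu=\big(\Delta(\tfrac{\lambda^2}{2})-\lambda^2|H|^2\big)d\mu$. Finally, since $\rho$ does not depend on $t$ and $d\mu=\lambda\rho$, the identity $\partial_t d\mu=(\partial_t\lambda)\rho$ combined with the previous line gives $\partial_t\lambda=\lambda\big(\Delta(\tfrac{\lambda^2}{2})-\lambda^2|H|^2\big)$, and hence $\partial_t(\lambda^2)=2\lambda\,\partial_t\lambda=\lambda^2\big(\Delta(\lambda^2)-2\lambda^2|H|^2\big)$, which is the third formula.

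I expect the main delicacy to be the tangential term. In ordinary mean curvature flow the velocity is purely normal and the tangential part, when present, is usually divergence-free and invisible to the intrinsic quantities; here $V^\top$ is a non-divergence-free gradient and contributes the Hessian term $\nabla_k\nabla_l(\lambda^2)$ and the Laplacian term $\Delta(\tfrac{\lambda^2}{2})$ directly. The care needed is therefore twofold: to justify the first-variation formula for $g_{kl}$ for an arbitrary (not purely normal) velocity, and to trace the Hessian of $\lambda^2$ correctly into the Laplacian without discarding the gradient contribution. By contrast, the Weingarten computation for $V^\perp$ and the passage from $d\mu$ to $\lambda^2$ via $d\mu=\lambda\rho$ are routine bookkeeping.
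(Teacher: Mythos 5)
Your proof is correct: the first-variation identity $\partial_t g_{kl}=\overline g(\overline\nabla_{\partial_k}V,\partial_l)+\overline g(\partial_k,\overline\nabla_{\partial_l}V)$, the splitting of $V$ into $\nabla(\tfrac{\lambda^2}{2})$ (giving the Hessian) and $\lambda^2H$ (giving $-2\lambda^2H_\alpha h_{\alpha kl}$ via Weingarten), the trace to get $\partial_t d\mu$, and the passage to $\partial_t(\lambda^2)$ through $d\mu=\lambda\rho$ with $\rho$ time-independent are all sound. The paper itself gives no proof of this proposition --- it is quoted from Song--Weinkove \cite{SW} --- and your argument is exactly the standard computation that citation refers to.
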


Due to the \Cref{P2.7}, one can see that $\lambda$ is non-increasing along the flow. Next, we derive the evolution formula of the second fundamental form.

\begin{theorem}
The second fundamental form is evolved by
\begin{align}\label{EVh}
    \frac{\partial}{\partial t}h_{\alpha ij}\nonumber&=\lambda^2\{\Delta h_{\alpha ij}-H_{\beta}h_{\beta il}h_{\alpha jl}+(\overline{\nabla}_k\overline{R})_{jki\alpha}+(\overline{\nabla}_j\overline{R})_{ikk\alpha}
    \\&\kern1em\nonumber-h_{\alpha il}\overline{R}_{jkkl}-h_{\alpha jl}\overline{R}_{ikkl}+h_{\beta ji}\overline{R}_{\beta kk\alpha}+2h_{\beta jk}\overline{R}_{ik\beta \alpha}+2h_{\beta ik}\overline{R}_{jk\beta\alpha}-2h_{\alpha kl}\overline{R}_{jkil}
    \\&\kern1em\nonumber+h_{\beta ik}(h_{\beta lk}h_{\alpha lj}-h_{\beta lj}h_{\alpha lk})+h_{\alpha mk}(h_{\gamma ji}h_{\gamma km}-h_{\gamma jm}h_{\gamma ik})+h_{\alpha im}(h_{\gamma jk}h_{\gamma km}-h_{\gamma jm}h_{\gamma kk})\}
    \\&\kern1em\nonumber+\lambda\lambda_k\overline{g}(\overline{\nabla}_{e_k}e_\alpha,e_\beta)h_{\beta ij}+\lambda^2  \overline{g}(\overline{\nabla}_{H}e_\alpha,\overline{\nabla}_{e_j}e_i)+(\lambda_j\lambda_k+\lambda\lambda_{jk})h_{\alpha ik}+(\lambda_i\lambda_k+\lambda\lambda_{ik})h_{\alpha jk}+\lambda\lambda_k h_{\alpha ij,k}
    \\&\kern1em+2(\lambda_i\lambda_j+\lambda\lambda_{ij})H_\alpha+2\lambda\lambda_iH_{\alpha,j}+2\lambda\lambda_jH_{\alpha,i},
\end{align}
where $\lambda_k=e_k(\lambda)$, $\lambda_{ik}=e_ie_k(\lambda)$ and $\overline{R}$ denote the curvature of $M$. 
\end{theorem}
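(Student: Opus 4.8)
The plan is to compute $\partial_t h_{\alpha ij}$ directly from its definition $h_{\alpha ij}=\overline{g}(e_\alpha,\overline{\nabla}_{\partial_i}\partial_j)$, following the strategy Huisken used for the second fundamental form under mean curvature flow but adapted to higher codimension, nontrivial ambient curvature $\overline{R}$, and the modified velocity $V:=\partial_t f=\lambda\nabla\lambda+\lambda^2H$. I regard $f$ as a map $S\times[0,T)\to M$; since the coordinate fields satisfy $[\partial_t,\partial_{x^i}]=0$ and $\overline{\nabla}$ is torsion free, pushing forward gives the basic commutation $\overline{\nabla}_{\partial_t}\partial_i=\overline{\nabla}_{\partial_i}V$, where $\partial_i=f_\star(\partial_{x^i})$ and $\partial_t$ is identified with $V$. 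Everything is reduced to computing $\overline{\nabla}V$ and $\overline{\nabla}^2V$ and projecting onto the moving frame.

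From the commutation relation I first recover the evolution of the metric, $\partial_t g_{ij}=\overline{g}(\overline{\nabla}_{\partial_i}V,\partial_j)+\overline{g}(\partial_i,\overline{\nabla}_{\partial_j}V)$, which reproduces \Cref{P2.7} after splitting $V$ into its tangential part $\lambda\nabla\lambda=\tfrac12\nabla(\lambda^2)$ and normal part $\lambda^2H$ (a useful consistency check). I then need the evolution of the orthonormal normal frame: differentiating $\overline{g}(e_\alpha,\partial_k)=0$ in time forces the tangential component of $\overline{\nabla}_{\partial_t}e_\alpha$ to equal $-\overline{g}(e_\alpha,\overline{\nabla}_{\partial_k}V)g^{kl}\partial_l$, while its normal component is a gauge that can be fixed; this is the source of the frame terms such as $\lambda\lambda_k\,\overline{g}(\overline{\nabla}_{e_k}e_\alpha,e_\beta)h_{\beta ij}$ and $\lambda^2\,\overline{g}(\overline{\nabla}_He_\alpha,\overline{\nabla}_{e_j}e_i)$ appearing in \eqref{EVh}.

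Next I differentiate $h_{\alpha ij}=\overline{g}(e_\alpha,\overline{\nabla}_{\partial_i}\partial_j)$, obtaining $\overline{g}(\overline{\nabla}_{\partial_t}e_\alpha,\overline{\nabla}_{\partial_i}\partial_j)+\overline{g}(e_\alpha,\overline{\nabla}_{\partial_t}\overline{\nabla}_{\partial_i}\partial_j)$. The first term is handled by the frame evolution above. For the second I commute time and space derivatives, $\overline{\nabla}_{\partial_t}\overline{\nabla}_{\partial_i}\partial_j=\overline{\nabla}_{\partial_i}\overline{\nabla}_{\partial_j}V+\overline{R}(V,\partial_i)\partial_j$, which produces the ambient curvature contributions and leaves $\overline{\nabla}_{\partial_i}\overline{\nabla}_{\partial_j}V$ as the main object. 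I expand this by Leibniz on the two summands of $V$. For the normal part $\lambda^2H$, the leading term is $\lambda^2\nabla_i\nabla_jH_\alpha$ (the rest carrying derivatives of $\lambda^2$ paired with $\overline{\nabla}H$); applying the Simons-type identity together with the Codazzi and Gauss equations converts $\nabla_i\nabla_jH_\alpha$ into $\lambda^2\Delta h_{\alpha ij}$ plus exactly the $(\overline{\nabla}\,\overline{R})$, $h\cdot\overline{R}$, and cubic $h\cdot h\cdot h$ terms of \eqref{EVh}. For the tangential part $\lambda\nabla\lambda$, the advection it generates yields $\lambda\lambda_k h_{\alpha ij,k}$, and its second derivatives produce the remaining terms of the form $(\lambda_j\lambda_k+\lambda\lambda_{jk})h_{\alpha ik}$ and $(\lambda_i\lambda_j+\lambda\lambda_{ij})H_\alpha$.

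I expect the main obstacle to be bookkeeping rather than a single conceptual step: distributing the Leibniz derivatives of the scalar factor $\lambda^2$ correctly, keeping the tangential/normal splitting and the normal-bundle connection terms consistent throughout, and applying the Simons identity with sign conventions and ambient curvature terms matching those of this paper so that all of the $\overline{R}$, $\overline{\nabla}\,\overline{R}$, cubic-$h$, and $\lambda$-derivative terms land precisely as written. To control this I would organize the expansion by order in $h$ and in derivatives of $\lambda$, verify the top-order coefficient $\lambda^2\Delta h_{\alpha ij}$ first, and then match the remaining groups term by term.
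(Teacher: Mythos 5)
Your proposal is correct and follows essentially the same route as the paper: differentiate $h_{\alpha ij}=\overline{g}(e_\alpha,\overline{\nabla}_{e_j}e_i)$ directly, use the frame evolution of $e_\alpha$ for the terms $\lambda\lambda_k\overline{g}(\overline{\nabla}_{e_k}e_\alpha,e_\beta)h_{\beta ij}$ and $\lambda^2\overline{g}(\overline{\nabla}_He_\alpha,\overline{\nabla}_{e_j}e_i)$, commute the time and space covariant derivatives to pick up $\overline{R}(V,e_j)e_i$, expand $\overline{\nabla}_{e_j}\overline{\nabla}_{e_i}V$ by Leibniz on the tangential and normal parts of $V$, and then trade $H_{\alpha,ij}$ for $\Delta h_{\alpha ij}$ via the Simons identity and finish with Codazzi. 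The only slip is cosmetic (the Simons identity converts $H_{\alpha,ij}$ into $\Delta h_{\alpha ij}$ plus lower-order terms; the $\lambda^2$ prefactor is already outside), so the remaining work is exactly the bookkeeping you anticipate.
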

\begin{proof}
We compute
\begin{align*}
    \frac{\partial}{\partial t}h_{\alpha ij}&=\overline{g}(\overline{\nabla}_{\nabla(\frac{\lambda^2}{2})+\lambda^2 H}e_\alpha,\overline{\nabla}_{e_j}e_i)+\overline{g}(e_\alpha,\overline{\nabla}_{\nabla(\frac{\lambda^2}{2})+\lambda^2 H}\overline{\nabla}_{e_j}e_i).
\end{align*}  
The first term is 
\begin{align*}
    \overline{g}(\overline{\nabla}_{\nabla(\frac{\lambda^2}{2})+\lambda^2 H}e_\alpha,\overline{\nabla}_{e_j}e_i)&=\overline{g}(\overline{\nabla}_{\nabla(\frac{\lambda^2}{2})}e_\alpha,\overline{\nabla}_{e_j}e_i)+\overline{g}(\overline{\nabla}_{\lambda^2 H}e_\alpha,\overline{\nabla}_{e_j}e_i)
    \\&=\lambda\lambda_k\overline{g}(\overline{\nabla}_{e_k}e_\alpha,e_\beta)h_{\beta ij}+\lambda^2 \overline{g}(\overline{\nabla}_{H}e_\alpha,\overline{\nabla}_{e_j}e_i),
\end{align*}
and the second term is 
\begin{align*}
\overline{g}(e_\alpha,\overline{\nabla}_{\nabla(\frac{\lambda^2}{2})+\lambda^2 H}\overline{\nabla}_{e_j}e_i)&=\overline{g}(e_\alpha,\overline{\nabla}_{e_j}\overline{\nabla}_{\nabla(\frac{\lambda^2}{2})+\lambda^2 H}e_i)+\overline{g}\big(\overline{R}(\nabla(\frac{\lambda^2}{2})+\lambda^2 H,e_j)e_i,e_\alpha\big)
\\&=\overline{g}\big(e_\alpha,\overline{\nabla}_{e_j}\overline{\nabla}_{e_i}[\nabla(\frac{\lambda^2}{2})+\lambda^2 H]\big)+\lambda\lambda_k\overline{R}_{kji \alpha}+\lambda^2 H_{\beta}\overline{R}_{\beta ji  \alpha}.
\end{align*}
Here 
\begin{align*}
    \overline{g}(e_\alpha,\overline{\nabla}_{e_j}\overline{\nabla}_{e_i}\nabla(\frac{\lambda^2}{2}))&=(\lambda_j\lambda_k+\lambda\lambda_{jk})h_{\alpha ik}+(\lambda_i\lambda_k+\lambda\lambda_{ik})h_{\alpha jk}+\lambda\lambda_k\overline{g}(e_\alpha,\overline{\nabla}_{e_j}\overline{\nabla}_{e_i}e_k)
    \\&=(\lambda_j\lambda_k+\lambda\lambda_{jk})h_{\alpha ik}+(\lambda_i\lambda_k+\lambda\lambda_{ik})h_{\alpha jk}+\lambda\lambda_k h_{\alpha ik,j},
\end{align*}
and
\begin{align*}
    \overline{g}(e_\alpha,\overline{\nabla}_{e_j}\overline{\nabla}_{e_i} (\lambda^2 H))&=\overline{g}(e_\alpha,e_je_i(\lambda^2 )H+e_i(\lambda^2 )\overline{\nabla}_{e_j}H+e_j(\lambda^2 )\overline{\nabla}_{e_i}H+\lambda^2 \overline{\nabla}_{e_j}\overline{\nabla}_{e_i}H)
    \\&=2(\lambda_i\lambda_j+\lambda\lambda_{ij})H_\alpha+2\lambda\lambda_iH_{\alpha,j}+2\lambda\lambda_jH_{\alpha,i}+\lambda^2\overline{g}(e_\alpha,\overline{\nabla}_{e_j}\overline{\nabla}_{e_i}H).
\end{align*}
Since
\begin{align*}
\overline{g}(e_\alpha,\overline{\nabla}_{e_j}\overline{\nabla}_{e_i}H)&=\overline{g}(e_\alpha,\overline{\nabla}_{e_j}\overline{\nabla}^T_{e_i}H)+\overline{g}(e_\alpha,\overline{\nabla}_{e_j}\overline{\nabla}^N_{e_i}H)
\\&=-\overline{g}(\overline{\nabla}_{e_j}e_\alpha,\overline{\nabla}^T_{e_i}H)+H_{\alpha,ij}
\\&=h_{\alpha jl}\overline{g}(\overline{\nabla}_{e_i}H,e_l)+H_{\alpha,ij}
\\&=-h_{\alpha jl}\overline{g}(H,\overline{\nabla}_{e_i}e_l)+H_{\alpha,ij}
\\&=-H_{\beta}h_{\beta il}h_{\alpha jl}+H_{\alpha,ij},
\end{align*}
we conclude that
\begin{align*}
     \frac{\partial}{\partial t}h_{\alpha ij}&=\lambda\lambda_k\overline{g}(\overline{\nabla}_{e_k}e_\alpha,e_\beta)h_{\beta ij}+\lambda^2 \overline{g}(\overline{\nabla}_{H}e_\alpha,\overline{\nabla}_{e_j}e_i)+\lambda\lambda_k\overline{R}_{kji \alpha}+\lambda^2 H_{\beta}\overline{R}_{\beta ji  \alpha}
     \\&\kern1em+(\lambda_j\lambda_k+\lambda\lambda_{jk})h_{\alpha ik}+(\lambda_i\lambda_k+\lambda\lambda_{ik})h_{\alpha jk}+\lambda\lambda_k h_{\alpha ik,j}
     \\&\kern1em+2(\lambda_i\lambda_j+\lambda\lambda_{ij})H_\alpha+2\lambda\lambda_iH_{\alpha,j}+2\lambda\lambda_jH_{\alpha,i}+\lambda^2(H_{\alpha,ij}-H_{\beta}h_{\beta il}h_{\alpha jl}).
\end{align*}
Using the Laplacian of the second fundamental form \cite{H1,H2}, \begin{align*}
     \Delta h_{\alpha ij}\nonumber&=H_{\alpha,ij}-(\overline{\nabla}_k\overline{R})_{jki\alpha}-(\overline{\nabla}_j\overline{R})_{ikk\alpha}
     \\&\kern1em\nonumber+h_{\alpha il}\overline{R}_{jkkl}+h_{\alpha jl}\overline{R}_{ikkl}-h_{\beta kk}\overline{R}_{j\beta i\alpha}-h_{\beta ji}\overline{R}_{\beta kk\alpha}-2h_{\beta jk}\overline{R}_{ik\beta \alpha}-2h_{\beta ik}\overline{R}_{jk\beta\alpha}+2h_{\alpha kl}\overline{R}_{jkil}
     \\&\kern1em-h_{\beta ik}(h_{\beta lk}h_{\alpha lj}-h_{\beta lj}h_{\alpha lk})-h_{\alpha mk}(h_{\gamma ji}h_{\gamma km}-h_{\gamma jm}h_{\gamma ik})
    -h_{\alpha im}(h_{\gamma jk}h_{\gamma km}-h_{\gamma jm}h_{\gamma kk})
\end{align*}
we replace $H_{\alpha ij}$ by $\Delta h_{\alpha ij}$. Then, the proof is completed by using the Codazzi equation $h_{\alpha ik,j}+\overline{R}_{kji\alpha}=h_{\alpha ij,k}$.
\end{proof}

\begin{corollary}
The norm of the second fundamental form $|A|^2=g^{ik}g^{jl}h_{\alpha ij}h_{\alpha kl}$ is evolved by
\begin{align*}
   \frac{\partial}{\partial t}|A|^2&\nonumber=\lambda^2\Big\{\Delta|A|^2-2|\nabla A|^2+2h_{\alpha ij}[(\overline{\nabla}_k\overline{R})_{jki\alpha}+(\overline{\nabla}_j\overline{R})_{ikk\alpha}]
 \\&\kern1em\nonumber+h_{\alpha ij}\big[-4h_{\alpha il}\overline{R}_{jkkl}+8h_{\beta jk}\overline{R}_{ik\beta \alpha}+2h_{\beta ji}\overline{R}_{\beta kk\alpha}-4h_{\alpha kl}\overline{R}_{jkil}\big]
 \\&\kern1em\nonumber+2\sum_{\alpha,\beta,i,l}(\sum_{k}h_{\alpha ik}h_{\beta kl}-h_{\alpha kl}h_{\beta ik})+2\sum_{i,j,k,l}(\sum_\alpha h_{\alpha ij}h_{\alpha kl})^2\Big\}
 \\&\kern1em+2h_{\alpha ij}\big[\lambda\lambda_k h_{\alpha ij,k}+2(\lambda_i\lambda_j+\lambda\lambda_{ij})H_\alpha+4\lambda\lambda_iH_{\alpha,j}\big].
\end{align*}
\end{corollary}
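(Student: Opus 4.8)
The plan is to differentiate $|A|^2 = g^{ik}g^{jl}h_{\alpha ij}h_{\alpha kl}$ directly, feeding in the evolution of the metric from \Cref{P2.7} and the evolution of $h_{\alpha ij}$ from \eqref{EVh}, and then to reorganize the resulting terms. Since the expression is symmetric under interchanging the index pairs $(ij)\leftrightarrow(kl)$ and carries two inverse metrics, the product rule gives
\begin{align*}
\frac{\partial}{\partial t}|A|^2 = 2\big(\frac{\partial}{\partial t}g^{ik}\big)g^{jl}h_{\alpha ij}h_{\alpha kl} + 2g^{ik}g^{jl}\big(\frac{\partial}{\partial t}h_{\alpha ij}\big)h_{\alpha kl}.
\end{align*}
I would carry out the whole computation at the fixed point $p$ in the chosen normal coordinates, where $g^{ik}=\delta^{ik}$, where $\nabla_i\nabla_k(\lambda^2) = 2\lambda_i\lambda_k + 2\lambda\lambda_{ik}$, and where $\frac{\partial}{\partial t}g^{ik} = -\frac{\partial}{\partial t}g_{ik} = -\nabla_i\nabla_k(\lambda^2) + 2\lambda^2 H_\beta h_{\beta ik}$ by \Cref{P2.7}.

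For the second, main, term I substitute \eqref{EVh} and contract against $2h_{\alpha ij}$. The leading piece $2\lambda^2 h_{\alpha ij}\Delta h_{\alpha ij}$ is rewritten using the Bochner identity $\Delta|A|^2 = 2h_{\alpha ij}\Delta h_{\alpha ij} + 2|\nabla A|^2$, producing $\lambda^2(\Delta|A|^2 - 2|\nabla A|^2)$. The curvature terms of \eqref{EVh} contract and, after symmetrizing in $i\leftrightarrow j$, collapse into the stated combinations $2h_{\alpha ij}[(\overline{\nabla}_k\overline{R})_{jki\alpha} + (\overline{\nabla}_j\overline{R})_{ikk\alpha}]$ and $h_{\alpha ij}[-4h_{\alpha il}\overline{R}_{jkkl} + 8h_{\beta jk}\overline{R}_{ik\beta\alpha} + 2h_{\beta ji}\overline{R}_{\beta kk\alpha} - 4h_{\alpha kl}\overline{R}_{jkil}]$.

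Two cancellations are the conceptual core of the bookkeeping. First, the term $2\lambda\lambda_k\overline{g}(\overline{\nabla}_{e_k}e_\alpha,e_\beta)h_{\beta ij}h_{\alpha ij}$, together with the $\overline{g}(\overline{\nabla}_H e_\alpha, e_\gamma)h_{\gamma ij}h_{\alpha ij}$ piece of $\lambda^2\overline{g}(\overline{\nabla}_H e_\alpha, \overline{\nabla}_{e_j}e_i)$, vanishes, because $\overline{g}(\overline{\nabla}_{(\cdot)} e_\alpha, e_\beta)$ is antisymmetric in $\alpha,\beta$ while $h_{\alpha ij}h_{\beta ij}$ is symmetric; the remaining tangential part of $\overline{\nabla}_{e_j}e_i$ vanishes at $p$ in normal coordinates. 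Second, the terms $(\lambda_i\lambda_k + \lambda\lambda_{ik})h_{\alpha ij}h_{\alpha jk}$ arising from \eqref{EVh} exactly cancel the Hessian contribution $-\nabla_i\nabla_k(\lambda^2)h_{\alpha ij}h_{\alpha kj}$ coming from $\frac{\partial}{\partial t}g^{ik}$. The surviving $\lambda$-derivative terms are precisely $2h_{\alpha ij}[\lambda\lambda_k h_{\alpha ij,k} + 2(\lambda_i\lambda_j + \lambda\lambda_{ij})H_\alpha + 4\lambda\lambda_i H_{\alpha,j}]$.

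The main obstacle is assembling the purely polynomial terms in $h$. These come from three sources: the two cubic brackets in the last polynomial line of \eqref{EVh}, the $-H_\beta h_{\beta il}h_{\alpha jl}$ term of \eqref{EVh}, and the term $4\lambda^2 H_\beta h_{\beta ik}h_{\alpha ij}h_{\alpha kj}$ produced by the mean-curvature part $2\lambda^2 H_\beta h_{\beta ik}$ of $\frac{\partial}{\partial t}g^{ik}$ (recall $H_\beta$ is itself a trace of $h$, so all of these are genuinely quartic). The task is to contract each against $2h_{\alpha ij}$ and regroup into the two manifestly structured expressions $2\sum_{\alpha,\beta,i,l}(\sum_k h_{\alpha ik}h_{\beta kl} - h_{\alpha kl}h_{\beta ik})$ and $2\sum_{i,j,k,l}(\sum_\alpha h_{\alpha ij}h_{\alpha kl})^2$. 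This is the Simons-type identity for the codimension-two second fundamental form; it requires careful index tracking but no idea beyond relabeling and the symmetry $h_{\alpha ij} = h_{\alpha ji}$, after which Stokes' theorem is not needed since the statement is pointwise.
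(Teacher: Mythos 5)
Your proposal follows the paper's proof essentially step for step: the product rule on $|A|^2$, the formula for $\frac{\partial}{\partial t}g^{ik}$ from \Cref{P2.7}, substitution of \eqref{EVh}, the identity $\Delta|A|^2=2h_{\alpha ij}\Delta h_{\alpha ij}+2|\nabla A|^2$, the cancellation of the Hessian-of-$\lambda^2$ terms against the metric-evolution contribution, and the same regrouping of the quartic terms (your explicit antisymmetry argument for discarding the $\overline{g}(\overline{\nabla}_{(\cdot)}e_\alpha,e_\beta)$ terms is merely left implicit in the paper). The only quibble is that the last polynomial line of \eqref{EVh} contains three cubic brackets rather than two, but this is a miscount in your description, not a gap in the argument.
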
  
\begin{proof}
From the definition, 
\begin{align*}
    \frac{\partial}{\partial t}|A|^2=2(\frac{\partial}{\partial t}g^{ik})h_{\alpha ij}h_{\alpha kj}+2(\frac{\partial}{\partial t}h_{\alpha ij})h_{\alpha ij}.
\end{align*}
Note that
\begin{align*}
    \frac{\partial}{\partial t}g^{ik}&=-g^{ij}(\frac{\partial}{\partial t}g_{jl})g^{kl}=-g^{ij}(\nabla_j\nabla_l(\lambda^2)-2\lambda^2H_\alpha h_{\alpha jl})g^{kl}
    \\&=-2(\lambda_i\lambda_k+\lambda\lambda_{ik})+2\lambda^2H_\alpha h_{\alpha ik}.
\end{align*}    
The first part is 
\begin{align*}
    2(\frac{\partial}{\partial t}g^{ik})h_{\alpha ij}h_{\alpha kj}=-4(\lambda_i\lambda_k+\lambda\lambda_{ik})h_{\alpha ij}h_{\alpha kj}+4\lambda^2H_\alpha h_{\alpha ik}h_{\alpha ij}h_{\alpha kj},
\end{align*}
and the second part is 
\begin{align*}
    &2(\frac{\partial}{\partial t}h_{\alpha ij})h_{\alpha ij}
    \\&\kern1em=2\lambda^2\Big\{ h_{\alpha ij}\Delta h_{\alpha ij}-H_{\beta}h_{\beta il}h_{\alpha jl}h_{\alpha ij}+h_{\alpha ij}\big[(\overline{\nabla}_k\overline{R})_{jki\alpha}+(\overline{\nabla}_j\overline{R})_{ikk\alpha}\big] 
    \\&\kern1em-h_{\alpha ij}\big[2h_{\alpha il}\overline{R}_{jkkl}+h_{\beta ji}\overline{R}_{\beta kk\alpha}+4h_{\beta jk}\overline{R}_{ik\beta \alpha}-2h_{\alpha kl}\overline{R}_{jkil}\big]
    \\&\kern1em+h_{\alpha ij}\big[h_{\beta ik}(h_{\beta lk}h_{\alpha lj}-h_{\beta lj}h_{\alpha lk})+h_{\alpha mk}(h_{\gamma ji}h_{\gamma km}-h_{\gamma jm}h_{\gamma ik})+h_{\alpha im}(h_{\gamma jk}h_{\gamma km}-h_{\gamma jm}h_{\gamma kk})\big]\Big\}
    \\&\kern1em+4h_{\alpha ij}(\lambda_j\lambda_k+\lambda\lambda_{jk})h_{\alpha ik}+2h_{\alpha ij}\big[\lambda\lambda_kh_{\alpha ij,k}+2(\lambda_i\lambda_j+\lambda\lambda_{ij})H_\alpha+4\lambda\lambda_iH_{\alpha,j}\big],
\end{align*}
where we used (\ref{EVh}). Since
\begin{align*}
    \Delta |A|^2=2|\nabla A|^2+2h_{\alpha ij}\Delta h_{\alpha ij},
\end{align*}
we derive that  
\begin{align*}
 \frac{\partial}{\partial t}|A|^2&=\lambda^2\Big\{\Delta|A|^2-2|\nabla A|^2+2h_{\alpha ij}\big[(\overline{\nabla}_k\overline{R})_{jki\alpha}+(\overline{\nabla}_j\overline{R})_{ikk\alpha}\big]
 \\&\kern1em+h_{\alpha ij}\big[-4h_{\alpha il}\overline{R}_{jkkl}+8h_{\beta jk}\overline{R}_{ik\beta \alpha}+2h_{\beta ji}\overline{R}_{\beta kk\alpha}-4h_{\alpha kl}\overline{R}_{jkil}\big]
 \\&\kern1em+2h_{\alpha ij}\big[h_{\beta ik}(h_{\beta lk}h_{\alpha lj}-h_{\beta lj}h_{\alpha lk})+h_{\alpha mk}(h_{\gamma ji}h_{\gamma km}-h_{\gamma jm}h_{\gamma ik})
 \\&\kern1em+h_{\alpha im}h_{\gamma jk}h_{\gamma km}\big]\Big\}+2h_{\alpha ij}\big[\lambda\lambda_k h_{\alpha ij,k}+2(\lambda_i\lambda_j+\lambda\lambda_{ij})H_\alpha+4\lambda\lambda_iH_{\alpha,j}\big].
\end{align*} 
Note that
\begin{align*}
    \nonumber h&_{\alpha ij}\big[h_{\beta ik}(h_{\beta lk}h_{\alpha lj}-h_{\beta lj}h_{\alpha lk})+h_{\alpha mk}(h_{\gamma ji}h_{\gamma km}-h_{\gamma jm}h_{\gamma ik})
+h_{\alpha im}h_{\gamma jk}h_{\gamma km}\big]
\\&\nonumber=2h_{\alpha ij}h_{\alpha il}h_{\beta jk}h_{\beta kl}-2h_{\alpha ij}h_{\alpha kl}h_{\beta jl}h_{\beta ik}+h_{\alpha ij}h_{\alpha kl}h_{\beta ij}h_{\beta kl}
\\&=\sum_{\alpha,\beta,i,l}(\sum_{k}h_{\alpha ik}h_{\beta kl}-h_{\alpha kl}h_{\beta ik})+\sum_{i,j,k,l}(\sum_\alpha h_{\alpha ij}h_{\alpha kl})^2,
\end{align*}
we then complete the proof.
\end{proof}

\subsection{Special Hyperkähler Flow}

Consider the $I-$holomorphic (2,0)-form $\theta=\overline{\omega}_2+i\overline{\omega}_3$ and the space

$$\mathcal{N} = \{ f : S \rightarrow M \  | \ f \textrm{ is an immersion
and} \ f^*(\theta) = \rho \}.$$
This condition indicates that the immersion surface is Lagrangian with respect to $\overline{\omega}_3$ and is symplectic with respect to $\overline{\omega}_2$. In \cite{SW}, Song and Weinkove proved the following.

\begin{theorem}[\cite{SW}, Theoerm 1]\label{T2.10}
Let $f_t$ be a solution of the $H$-flow for $0\le
t \le T$.   If $f_0\in \mathcal{N}$, then $f_t \in \mathcal{N}$ for $0\le t \le T$. Thus, we call the $H-$flow with initial point in $\mathcal{N}$ by the special hyperkähler flow (special $H-$flow).
\end{theorem}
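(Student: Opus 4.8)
The plan is to track the single complex-valued function $\tilde{N}=N_2+iN_3$, which records $f_t^\star(\theta)=\tilde{N}\rho$, and to show it satisfies a homogeneous linear parabolic equation for which the constant $1$ is a solution; since $\tilde{N}|_{t=0}\equiv 1$, uniqueness then forces $\tilde{N}\equiv 1$, i.e. $f_t\in\mathcal{N}$. Because $\theta=\overline{\omega}_2+i\overline{\omega}_3$ is closed, Cartan's formula applied to the velocity field $V=\tfrac{\partial f}{\partial t}=If_\star(\xi_1)+Jf_\star(\xi_2)+Kf_\star(\xi_3)$ gives
\[
\Big(\frac{\partial \tilde{N}}{\partial t}\Big)\rho=\frac{\partial}{\partial t}f_t^\star(\theta)=f_t^\star\big(d\,\iota_V\theta\big)=d\beta,\qquad \beta:=f_t^\star(\iota_V\theta),
\]
so everything reduces to computing the $1$-form $\beta$ on $S$ and its exterior derivative.

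First I would evaluate $\beta$ explicitly. Writing $\overline{\omega}_a(X,Y)=\overline{g}(I_aX,Y)$ with $(I_1,I_2,I_3)=(I,J,K)$ and using the quaternion relations $IJ=K$, $JK=I$, $KI=J$, one expands $JV$ and $KV$ and pairs them against $f_\star W$ for $W\in TS$. Each resulting term has the form $\overline{\omega}_b(f_\star\xi_a,f_\star W)=N_b\,dN_a(W)$ or the induced metric $g(\xi_a,W)$, since $f^\star\overline{\omega}_b=N_b\rho$ and $\rho(\xi_a,\cdot)=dN_a$. Collecting terms, and writing $\tilde\xi=\xi_2+i\xi_3$ for the Hamiltonian field of $\tilde{N}$ (so $\iota_{\tilde\xi}\rho=d\tilde{N}$), I expect the compact expression
\[
\beta=i\tilde{N}\,dN_1-iN_1\,d\tilde{N}-\iota_{\tilde\xi}g,\qquad \iota_{\tilde\xi}g=g(\tilde\xi,\cdot).
\]
Taking $d$, the first two terms combine into the first-order expression $2i\,d\tilde{N}\wedge dN_1$, while the third supplies the principal part.

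The key structural point is that on the oriented surface $S$ the symplectic gradient with respect to $\rho=d\mu/\lambda$ is the metric gradient rotated by the Hodge star, which yields $\iota_{\tilde\xi}g=-\lambda\,\ast d\tilde{N}$ and hence $-d(\iota_{\tilde\xi}g)=\lambda^2(\Delta\tilde{N})\rho+d\lambda\wedge\ast d\tilde{N}$, with $\Delta$ the Laplacian of the induced metric $g$. Dividing the identity $(\partial_t\tilde{N})\rho=2i\,d\tilde{N}\wedge dN_1-d(\iota_{\tilde\xi}g)$ by $\rho$ therefore gives
\[
\frac{\partial \tilde{N}}{\partial t}=\lambda^2\,\Delta\tilde{N}+b\cdot\nabla\tilde{N},
\]
a genuinely forward-parabolic equation (the $+\lambda^2\Delta$ sign matches the scaling already seen in \Cref{P2.7}) whose coefficients $b$ are smooth and depend only on the given solution $f_t$, through $\nabla N_1$ and $\nabla\lambda$. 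Crucially, every term involves a derivative of $\tilde{N}$, so $u:=\tilde{N}-1$ satisfies the same homogeneous equation $\partial_t u=\lambda^2\Delta u+b\cdot\nabla u$ with $u|_{t=0}\equiv 0$.

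To finish, on the closed surface $S$ I would run a Gronwall/energy estimate rather than cite general parabolic uniqueness: since $\rho$ is time-independent,
\[
\frac{d}{dt}\int_S|u|^2\rho=2\,\mathrm{Re}\int_S\bar u\,(\lambda^2\Delta u+b\cdot\nabla u)\,\rho,
\]
and rewriting $\lambda^2\rho=\lambda\,d\mu$ and integrating the Laplacian term by parts produces the good term $-\int_S\lambda|\nabla u|^2\,d\mu\le 0$ together with terms bounded by $\epsilon\int_S|\nabla u|^2\,d\mu+C\int_S|u|^2\rho$; absorbing the gradient terms yields $\tfrac{d}{dt}\int_S|u|^2\rho\le C\int_S|u|^2\rho$, and Gronwall with zero initial data forces $u\equiv 0$, i.e. $\tilde{N}\equiv 1$ and $f_t\in\mathcal{N}$. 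I expect the main obstacle to be the middle step: carrying out the quaternionic bookkeeping to reach the clean formula for $\beta$, and above all correctly identifying $-d(\iota_{\tilde\xi}g)$ as the elliptic operator $\lambda^2\Delta$ with the forward-parabolic sign; once that is secured the remainder is routine linear PDE. Treating $\tilde{N}$ as a single complex unknown also neatly avoids having to decouple the Lagrangian condition $N_3=0$ from the calibration condition $N_2=1$ at intermediate stages.
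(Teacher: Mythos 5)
Your argument is correct, and it takes a genuinely different route from the paper. The paper's proof (following Song--Weinkove) goes through \Cref{P2.11}: one computes the evolution of $\eta=f^\star\overline{\omega}/d\mu$ for a general ambient $2$-form, which drags in the second fundamental form, the ambient curvature, and covariant derivatives of $\overline{\omega}$; one then specializes to $\eta_3$ and $\eta_2-\tfrac{1}{\lambda}$ and applies the maximum principle to the resulting coupled parabolic system. You instead exploit the closedness of $\theta=\overline{\omega}_2+i\overline{\omega}_3$ via the homotopy formula $\tfrac{d}{dt}f_t^\star\theta=d\big(f_t^\star(\iota_V\theta)\big)$, so that no curvature or second-fundamental-form terms ever appear; I checked the quaternionic bookkeeping and your formula $\beta=i\tilde{N}\,dN_1-iN_1\,d\tilde{N}-\iota_{\tilde\xi}g$ is right ($JV=-Kf_\star\xi_1-f_\star\xi_2+If_\star\xi_3$, $KV=Jf_\star\xi_1-If_\star\xi_2-f_\star\xi_3$, each term pairing to $N_b\,dN_a$ or $g(\xi_a,\cdot)$), as is the identification $\iota_{\tilde\xi}g=-\lambda\ast d\tilde{N}$ and hence the forward-parabolic sign $+\lambda^2\Delta\tilde{N}$. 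Since every term is at least first order in $\tilde{N}$, the constant $1$ solves the same equation, and your $L^2$/Gronwall argument (valid on a closed $S$, with $\lambda$ bounded below on the compact space-time) replaces the maximum principle. What each approach buys: the paper's \Cref{P2.11} is not wasted effort, since the same general evolution formula is reused in Section 4 for the form $\Omega$, which is neither closed nor parallel; your route is shorter and conceptually cleaner for this particular theorem, packages the two conditions $\eta_3=0$ and $\eta_2=\tfrac{1}{\lambda}$ (equivalently $N_2=1$, $N_3=0$) into the single complex unknown $\tilde{N}$, and makes transparent \emph{why} the statement is true (closedness of $\theta$ plus the quaternion relations), at the cost of not producing the pointwise evolution formulas needed later in the paper.
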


 Define the functions $\eta_i$ on $S$ by 
\begin{align*}
    \eta_i=\frac{f^\star\overline{\omega}_i}{d\mu}, 
\end{align*}
where $i=1,2,3$. In other words, $f\in \mathcal{N}\Longleftrightarrow \eta_2=\frac{1}{\lambda}, \eta_3=0$. To briefly explain the proof of \Cref{T2.10}, let me compute the evolution formula.

\begin{proposition}\label{P2.11}
 Let $\overline{\omega}$ be a 2-form and $\eta=\frac{f^*\overline{\omega}}{d\mu}$. Then, $\eta$ evolves by 
\begin{align}
      \frac{\partial}{\partial t} \eta&\nonumber=\lambda^2[\Delta\eta+\overline{\omega}((\overline{R}(e_1,e_k)e_k)^N,e_2)+\overline{\omega}(e_1,(\overline{R}(e_2,e_k)e_k)^N)+\eta|A|^2]
    \\&\kern1em\nonumber+2\lambda[e_1(\lambda)\overline{\omega}(H,e_2)+e_2(\lambda)\overline{\omega}(e_1,H)]
    \\&\kern1em\nonumber+\lambda e_i(\lambda)[\overline{\omega}(\overline{\nabla}_{e_1}e_i,e_2)+\overline{\omega}(e_1,\overline{\nabla}_{e_2}e_i)]-2\lambda^2\overline{\omega}_{34}(h_{3k1}h_{4k2}-h_{3k2}h_{4k1})
    \\&\kern1em+(\overline{\nabla}_{\lambda\nabla\lambda+\lambda^2H}\overline{\omega})(e_1,e_2)-\lambda^2\overline{\omega}_{12,kk}-2\lambda^2\overline{\omega}_{\alpha 2,k}h_{\alpha k1}-2\lambda^2\overline{\omega}_{1\alpha ,k}h_{\alpha k2},  
\end{align}
where $\{e_1,e_2\}$ are orthonormal frame for $Tf(S)$ and $\{e_3,e_4\}$ are orthonormal frame for $(Tf(S))^\perp$. 
\end{proposition}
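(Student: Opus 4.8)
The plan is to differentiate the defining ratio $\eta = f^*\overline{\omega}/d\mu$ and feed in the volume evolution from \Cref{P2.7}. Working at the fixed point $p$ in the chosen normal coordinates, where $\sqrt{\det g}=1$ and (after choosing $e_1,e_2$ geodesic and $e_3,e_4$ parallel at $p$) all first derivatives of the frame vanish, I would write $\eta = (f^*\overline{\omega})(\partial_1,\partial_2)/\sqrt{\det g}$ and use the quotient rule
\begin{align*}
    \frac{\partial \eta}{\partial t} = \frac{\partial}{\partial t}\big[(f^*\overline{\omega})(\partial_1,\partial_2)\big] - \eta\Big(\Delta\big(\tfrac{\lambda^2}{2}\big) - \lambda^2|H|^2\Big),
\end{align*}
where the parenthesis is exactly $\frac{\partial}{\partial t}\log d\mu$ supplied by \Cref{P2.7}. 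The role of this term is to cancel the ``scalar'' pieces produced by the numerator, leaving a clean $\lambda^2$-multiple.

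For the numerator I would use the covariant form of the first variation of a pullback: with $V = \partial f/\partial t = \lambda\nabla\lambda + \lambda^2 H$,
\begin{align*}
    \frac{\partial}{\partial t}\big[(f^*\overline{\omega})(\partial_1,\partial_2)\big] = (\overline{\nabla}_V\overline{\omega})(e_1,e_2) + \overline{\omega}(\overline{\nabla}_{e_1}V, e_2) + \overline{\omega}(e_1, \overline{\nabla}_{e_2}V),
\end{align*}
obtained by differentiating $\overline{\omega}_{AB}\tfrac{\partial f^A}{\partial x^1}\tfrac{\partial f^B}{\partial x^2}$ and converting ordinary derivatives into covariant ones (the Christoffel corrections cancel between the two kinds of terms). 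The first summand is the stated term $(\overline{\nabla}_{\lambda\nabla\lambda+\lambda^2H}\overline{\omega})(e_1,e_2)$. The core of the argument is to expand $\overline{\nabla}_{e_i}V = \overline{\nabla}_{e_i}(\lambda\nabla\lambda)+\overline{\nabla}_{e_i}(\lambda^2H)$ using $\overline{\nabla}_{e_i}X = \nabla_{e_i}X + A(e_i,X)$ for the tangent field $X=\nabla\lambda$ and the Weingarten identity $\overline{\nabla}_{e_i}H = -H_\alpha h_{\alpha il}e_l + H_{\alpha,i}e_\alpha$, then pair each piece with $e_2$ or $e_1$.

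Collecting the lower order terms is bookkeeping. The factors $2\lambda\lambda_i H$ from $\overline{\nabla}_{e_i}(\lambda^2H)$ give the stated $2\lambda[\lambda_1\overline{\omega}(H,e_2)+\lambda_2\overline{\omega}(e_1,H)]$; the terms $\lambda\lambda_k h_{\alpha ik}e_\alpha$ from $\overline{\nabla}_{e_i}(\lambda\nabla\lambda)$ reassemble into $\lambda\lambda_i[\overline{\omega}(\overline{\nabla}_{e_1}e_i,e_2)+\overline{\omega}(e_1,\overline{\nabla}_{e_2}e_i)]$, since $\overline{\nabla}_{e_1}e_i = h_{\alpha 1i}e_\alpha$ at $p$. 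The remaining tangential contribution of $\overline{\nabla}_{e_i}(\lambda\nabla\lambda)$ equals $\eta(|\nabla\lambda|^2 + \lambda\Delta\lambda) = \eta\,\Delta(\tfrac{\lambda^2}{2})$, which cancels the $-\eta\,\Delta(\tfrac{\lambda^2}{2})$ from the volume term; likewise the tangential part $-\lambda^2 H_\alpha h_{\alpha il}e_l$ of $\overline{\nabla}_{e_i}(\lambda^2H)$ produces $-\lambda^2|H|^2\eta$, cancelling $+\lambda^2|H|^2\eta$ from the volume term. After these cancellations the only surviving $\lambda^2$-piece from the numerator is $\lambda^2\big[H_{\alpha,1}\overline{\omega}_{\alpha 2} + H_{\alpha,2}\overline{\omega}_{1\alpha}\big]$.

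The main obstacle, and the real content, is the Bochner-type identity rewriting $H_{\alpha,1}\overline{\omega}_{\alpha 2}+H_{\alpha,2}\overline{\omega}_{1\alpha}$ in terms of $\Delta\eta$. For this I would compute the scalar Laplacian $\Delta\eta = \sum_k e_ke_k[\overline{\omega}(e_1,e_2)]$ directly at $p$, expanding twice through $e_k[\overline{\omega}(e_1,e_2)] = (\overline{\nabla}_{e_k}\overline{\omega})(e_1,e_2)+\overline{\omega}(\overline{\nabla}_{e_k}e_1,e_2)+\overline{\omega}(e_1,\overline{\nabla}_{e_k}e_2)$. This produces the Hessian coefficients $\overline{\omega}_{12,kk}$; the cross terms $2\overline{\omega}_{\alpha 2,k}h_{\alpha k1}+2\overline{\omega}_{1\alpha,k}h_{\alpha k2}$ from one frame derivative meeting one $\overline{\nabla}\overline{\omega}$; the normal-bundle term $2\overline{\omega}_{34}(h_{3k1}h_{4k2}-h_{3k2}h_{4k1})$ from $\overline{\omega}(\overline{\nabla}_{e_k}e_1,\overline{\nabla}_{e_k}e_2)$; and the double frame derivatives $\overline{\omega}(\overline{\nabla}_{e_k}\overline{\nabla}_{e_k}e_1,e_2)+\overline{\omega}(e_1,\overline{\nabla}_{e_k}\overline{\nabla}_{e_k}e_2)$. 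The crucial step is the last terms: the purely intrinsic part $\nabla_{e_k}\nabla_{e_k}e_1$ contributes nothing, since its $e_1$-component vanishes for an orthonormal frame with vanishing first derivatives at $p$ and its $e_2$-component is killed by $\overline{\omega}(\cdot,e_2)$; the Weingarten route yields the tangential part $-h_{\alpha k1}h_{\alpha km}e_m$, hence $-\eta|A|^2$; and the normal part $\nabla_k h_{\alpha k1}\,e_\alpha$ is converted by the Codazzi equation into $H_{\alpha,1}e_\alpha$ plus an ambient-curvature contraction pairing to $\overline{\omega}((\overline{R}(e_1,e_k)e_k)^N,e_2)$. Solving the resulting identity for $H_{\alpha,1}\overline{\omega}_{\alpha 2}+H_{\alpha,2}\overline{\omega}_{1\alpha}$ and substituting into the numerator finishes the proof; the delicate points are the sign of the curvature contraction (fixed by the convention for $\overline{R}$ and Codazzi) and keeping straight which contractions are tangential versus normal, everything else being routine once the frame is chosen geodesic at $p$.
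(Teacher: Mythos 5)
Your proposal is correct and follows essentially the same route as the paper: differentiate the ratio using the volume evolution from \Cref{P2.7}, expand $\overline{\omega}(\overline{\nabla}_{e_i}(\lambda\nabla\lambda+\lambda^2H),e_j)$ into tangential and normal pieces, and trade $H_{\alpha,1}\overline{\omega}_{\alpha 2}+H_{\alpha,2}\overline{\omega}_{1\alpha}$ for $\Delta\eta$ via the Laplacian-of-a-2-form identity. The only difference is that you derive that identity directly (frame computation plus Codazzi) where the paper simply quotes it from \cite{W3}; your derivation of it is sound.
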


\begin{proof}
\begin{align*}
    \frac{\partial}{\partial t} f^\star\overline{\omega}(e_1,e_2)&=\big(\overline{\nabla}_{\lambda\nabla\lambda+\lambda^2H}\overline{\omega}\big)(e_1,e_2)+\overline{\omega}(\overline{\nabla}_{\lambda\nabla\lambda+\lambda^2 H}e_1,e_2)+\overline{\omega}(e_1,\overline{\nabla}_{\lambda\nabla\lambda+\lambda^2 H}e_2)
    \\&=\big(\overline{\nabla}_{\lambda\nabla\lambda+\lambda^2H}\overline{\omega}\big)(e_1,e_2)+\overline{\omega}\big(\overline{\nabla}_{e_1}{(\lambda\nabla\lambda+\lambda^2 H)},e_2\big)+\overline{\omega}\big(e_1,\overline{\nabla}_{e_2}{(\lambda\nabla\lambda+\lambda^2 H})\big).
\end{align*}
First, we have
\begin{align*}
    \overline{\omega}\big(\overline{\nabla}_{e_1}(\lambda\nabla\lambda),e_2\big)&=\overline{\omega}\big(\overline{\nabla}_{e_1}\nabla(\frac{\lambda^2}{2}),e_2\big)=\overline{\omega}\big(\overline{\nabla}_{e_1}\nabla_{e_i}(\frac{\lambda^2}{2})e_i,e_2\big)+\overline{\omega}\big(\nabla_{e_i}(\frac{\lambda^2}{2})\overline{\nabla}_{e_1}e_i,e_2\big)
    \\&=\nabla_{e_1}\nabla_{e_1}(\frac{\lambda^2}{2})\eta+\lambda e_i(\lambda)\overline{\omega}(\overline{\nabla}_{e_1}e_i,e_2),
\end{align*}
and 
\begin{align*}
    \overline{\omega}\big(\overline{\nabla}_{e_1}(\lambda^2 H),e_2\big)=2\lambda\overline{\omega}\big(e_1(\lambda)H,e_2)+\lambda^2\overline{\omega}(\overline{\nabla}_{e_1}H,e_2\big)=2\lambda\overline{\omega}(e_1(\lambda)H,e_2)+ \lambda^2\overline{\omega}\big((\overline{\nabla}_{e_1}H)^N,e_2\big)-\lambda^2\eta \overline{g}\big(H,\overline{\nabla}_{e_1}e_1\big).
\end{align*}
So, 
\begin{align*}
    \overline{\omega}&(\overline{\nabla}_{e_1}(\lambda^2 H),e_2)+\overline{\omega} (e_1,\overline{\nabla}_{e_2}(\lambda^2 H))
    \\&=2\lambda\big[e_1(\lambda)\overline{\omega}(H,e_2)+e_2(\lambda)\overline{\omega}(e_1,H)\big]+\lambda^2\big[\overline{\omega}((\overline{\nabla}_{e_1}H)^N,e_2)+\overline{\omega}(e_1,(\overline{\nabla}_{e_2}H)^N)\big]-\lambda^2\eta|H|^2.
\end{align*}
Recall that the Laplacian of 2-form (\cite{W3}) is given by
\begin{align}
    \Delta \eta&\nonumber=\overline{\omega}_{12,kk}+2\overline{\omega}_{\alpha 2,k}h_{\alpha k1}+2\overline{\omega}_{1\alpha ,k}h_{\alpha k2}+2\overline{\omega}_{34}(h_{3k1}h_{4k2}-h_{3k2}h_{4k1})-\eta|A|^2
    \\&\kern1em+\overline{R}_{k1k\alpha}\overline{\omega}_{\alpha 2}+\overline{R}_{k2k\alpha}\overline{\omega}_{1\alpha }+H_{\alpha,1}\overline{\omega}_{\alpha 2}+H_{\alpha,2}\overline{\omega}_{1\alpha }.
\end{align}
We deduce that

\begin{align*}
   \overline{\omega}&((\overline{\nabla}_{e_1}H)^N,e_2)+\overline{\omega}(e_1,(\overline{\nabla}_{e_2}H)^N)&
   \\&=\Delta\eta-\overline{R}_{k1k\alpha}\overline{\omega}_{\alpha 2}-\overline{R}_{k2k\alpha}\overline{\omega}_{1\alpha }
  +\eta|A|^2-2\overline{\omega}_{34}(h_{3k1}h_{4k2}-h_{3k2}h_{4k1})-\overline{\omega}_{12,kk}-2\overline{\omega}_{\alpha 2,k}h_{\alpha k1}-2\overline{\omega}_{1\alpha ,k}h_{\alpha k2}.
\end{align*}
Thus,
\begin{align*}  
    \frac{\partial}{\partial t} f^\star\overline{\omega}(e_1,e_2)&=\lambda^2\big[\Delta\eta-\overline{R}_{k1k\alpha}\overline{\omega}_{\alpha 2}-\overline{R}_{k2k\alpha}\overline{\omega}_{1\alpha }-\eta|H|^2+\eta|A|^2\big]
    \\&\kern1em+2\lambda\big[e_1(\lambda)\overline{\omega}(H,e_2)+e_2(\lambda)\overline{\omega}(e_1,H)\big]+\Delta(\frac{\lambda^2}{2})\eta
    \\&\kern1em+\lambda e_i(\lambda)\big[\overline{\omega}(\overline{\nabla}_{e_1}e_i,e_2)+\overline{\omega}(e_1,\overline{\nabla}_{e_2}e_i)\big]-2\lambda^2\overline{\omega}_{34}(h_{3k1}h_{4k2}-h_{3k2}h_{4k1})
    \\&\kern1em+(\overline{\nabla}_{\lambda\nabla\lambda+\lambda^2H}\overline{\omega})(e_1,e_2)-\lambda^2\overline{\omega}_{12,kk}-2\lambda^2\overline{\omega}_{\alpha 2,k}h_{\alpha k1}-2\lambda^2\overline{\omega}_{1\alpha ,k}h_{\alpha k2}.
\end{align*}
Finally, we know that 
\begin{align*}
    \frac{\partial}{\partial t}\eta=\frac{\frac{\partial}{\partial t} f^\star\overline{\omega}(e_1,e_2)}{d\mu}-\frac{ f^\star\overline{\omega}}{(d\mu)^2}\frac{\partial}{\partial t}d\mu,
\end{align*}
and 
\begin{align*}
    \ddt{} d\mu = \left( \Delta ( \frac{\lambda^2}{2}
) - \lambda^2 |H|^2 \right) d\mu.
\end{align*}
We complete the proof.    
\end{proof}

By \Cref{P2.11}, we compute $\frac{\partial}{\partial t}\eta_3$ and $\frac{\partial}{\partial t}(\eta_2-\frac{1}{\lambda})$. Using the maximum principle, we conclude that these two terms vanish all time. Thus, the proof of \Cref{T2.10} follows. (See \cite{SW} for more details.)

For any $p\in S$, let $\{x_1,x_2\}$ be a normal coordinate system at $p$ as before, and we take $\nu_i=K\partial_i$, $i=1,2$. Due to \Cref{T2.10}, 
\begin{align*}
    \overline{g}(\nu_i, \partial_j)=\overline{g}(K\partial_i,\partial_j)=\eta_3(\partial_i,\partial_j)=0.
\end{align*}
Then, we have an orthonormal basis $\{\partial_1,\partial_2,\nu_1,\nu_2\}$ for $T_{f(p)}M$ and the second fundamental form is defined by $h_{ijk}=\overline{g}(\nu_i,\overline{\nabla}_{\partial_i}\partial_j)$. Moreover, we deduce the following. 
\begin{lemma} 
At $p$,
\begin{align}
   &\nonumber \overline{\omega}_1=\begin{pmatrix} 0 & \eta_1 & 0 & -\sqrt{1-\eta_1^2}\\
- \eta_1& 0 & \sqrt{1-\eta_1^2} & 0 \\
0 & -\sqrt{1-\eta_1^2} & 0 & - \eta_1 \\
\sqrt{1-\eta_1^2} & 0 &  \eta_1 & 0
\end{pmatrix},
\\&\nonumber
\overline{\omega}_2=\begin{pmatrix} 0 & \sqrt{1-\eta_1^2} & 0 & \eta_1\\
- \sqrt{1-\eta_1^2}& 0 & -\eta_1 & 0 \\
0 & \eta_1 & 0 & -\sqrt{1-\eta_1^2} \\
-\eta_1 & 0 &  \sqrt{1-\eta_1^2} & 0
\end{pmatrix},
\\&\overline{\omega}_3=\begin{pmatrix} 0 & 0 & 1 & 0 \\
0& 0 & 0 & 1\\   
- 1 & 0 & 0 & 0 \\
0 & -1 & 0 & 0
\end{pmatrix}.
\end{align}
\end{lemma}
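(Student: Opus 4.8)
The plan is to treat everything as pointwise linear algebra on $T_{f(p)}M$, using $\overline{\omega}_1(X,Y)=\overline{g}(IX,Y)$, $\overline{\omega}_2(X,Y)=\overline{g}(JX,Y)$, $\overline{\omega}_3(X,Y)=\overline{g}(KX,Y)$, together with the fact that $I,J,K$ are $\overline{g}$-skew isometries satisfying $I^2=J^2=K^2=-\mathrm{id}$ and the quaternion relations $IJ=K=-JI$, $JK=I=-KJ$, $KI=J=-IK$. As in the paragraph preceding the statement, the frame is already in place: since $f\in\mathcal{N}$ forces $\eta_3=0$, hence $f^\star\overline{\omega}_3=0$, the plane $T_{f(p)}(f(S))$ is Lagrangian with respect to $\overline{\omega}_3$, so $\overline{g}(\nu_i,\partial_j)=\overline{g}(K\partial_i,\partial_j)=\overline{\omega}_3(\partial_i,\partial_j)=0$ and each $\nu_i$ is normal; because $K$ is an isometry with $K^2=-\mathrm{id}$ one gets $\overline{g}(\nu_i,\nu_j)=\overline{g}(\partial_i,\partial_j)=\delta_{ij}$, so $\{\partial_1,\partial_2,\nu_1,\nu_2\}$ is an orthonormal basis.

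The one conceptual input is the normalization constraint $\eta_1^2+\eta_2^2+\eta_3^2=1$, which is exactly what produces the $\sqrt{1-\eta_1^2}$ entries. I would prove it by observing that $\{\partial_1,I\partial_1,J\partial_1,K\partial_1\}$ is itself orthonormal: each $L\partial_1$ with $L\in\{I,J,K\}$ is a unit vector orthogonal to $\partial_1$ by skewness, and for $L\ne L'$ the product $LL'=\pm(\text{the third operator})$ is skew, so $\overline{g}(L\partial_1,L'\partial_1)=-\overline{g}(\partial_1,LL'\partial_1)=0$. Expanding $\partial_2$ in this basis and using $\overline{g}(\partial_2,\partial_1)=0$ gives $1=|\partial_2|^2=\sum_{a}\overline{g}(L_a\partial_1,\partial_2)^2=\eta_1^2+\eta_2^2+\eta_3^2$. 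With $\eta_3=0$ this yields $\eta_2=\sqrt{1-\eta_1^2}$, the positive sign being fixed by the orientation and the $\mathcal{N}$-condition $\eta_2=1/\lambda>0$.

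With the frame and the constraint in hand, each matrix entry $\overline{\omega}_a(e_A,e_B)=\overline{g}(L_ae_A,e_B)$ collapses to $0,\pm\eta_1$, or $\pm\eta_2$ by a one-line manipulation: slide $L_a$ across the inner product by skewness, substitute $\nu_i=K\partial_i$, and reduce the resulting composite $L_aK$ or $KL_a$ to a single operator via the quaternion relations. For example $\overline{\omega}_3$ falls out immediately from $\overline{g}(K\partial_i,\nu_j)=\delta_{ij}$ and $\overline{g}(K\nu_1,\nu_2)=-\overline{g}(\partial_1,\nu_2)=0$; and a typical off-diagonal term is $\overline{\omega}_1(\partial_1,\nu_2)=\overline{g}(I\partial_1,K\partial_2)=-\overline{g}(KI\partial_1,\partial_2)=-\overline{g}(J\partial_1,\partial_2)=-\eta_2=-\sqrt{1-\eta_1^2}$. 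Running this over all pairs $(A,B)$, and using antisymmetry of each $\overline{\omega}_a$ to halve the work, reproduces the three displayed matrices.

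The main obstacle is purely the bookkeeping of signs: one must fix consistently which of $\pm I,\pm J,\pm K$ each composite $L_aL_b$ equals, and keep the orientation convention used to sign $\sqrt{1-\eta_1^2}$ coherent across all positions, since a single misidentified sign propagates into several entries. The only genuinely structural step is the constraint $\sum_a\eta_a^2=1$; everything else is the routine, if lengthy, evaluation just described.
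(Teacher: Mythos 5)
Your proposal is correct and follows essentially the same route as the paper: evaluate each entry $\overline{\omega}_a(e_A,e_B)=\overline{g}(L_ae_A,e_B)$ by sliding $L_a$ across the metric and reducing composites like $KI=J$ via the quaternion relations, then use $\eta_2=1/\lambda>0$ together with $\eta_1^2=1-1/\lambda^2$ to fix the sign of $\sqrt{1-\eta_1^2}$. The only difference is that you supply a proof of the normalization $\eta_1^2+\eta_2^2+\eta_3^2=1$ (via the orthonormal frame $\{\partial_1,I\partial_1,J\partial_1,K\partial_1\}$), which the paper simply asserts in the form $\eta_1^2=1-\tfrac{1}{\lambda^2}$ --- a welcome, but minor, addition.
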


\begin{proof}
We compute
\begin{align*}
    &\overline{\omega}_1(\partial_i,\nu_j)=\overline{g}(I\partial_i,K\partial_j)=-\overline{g}(J\partial_i,\partial_j)=-\overline{\omega}_2(\partial_i,\partial_j), \\&\overline{\omega}_1(\nu_i,\nu_j)=\overline{g}(IK\partial_i,K\partial_j)=\overline{g}(I\partial_j,\partial_i)=\overline{\omega}_1(\partial_j,\partial_i).
\end{align*}
Note that $\eta_1^2=1-\frac{1}{\lambda^2}\Rightarrow \frac{1}{\lambda}=\pm \sqrt{1-\eta_1^2}$ and $    \rho(\partial_1,\partial_2)=\eta_2(\partial_1,\partial_2)=\frac{1}{\lambda}>0$, we derive that $\frac{1}{\lambda}=\sqrt{1-\eta_1^2}$.

\end{proof}

In term of this coordinate system, we observe that

\begin{align}\label{Lam}
    \nonumber\partial_k(\lambda)&=-\lambda^2[\overline{\omega}_2(\overline{\nabla}_{\partial_k}\partial_1,\partial_2)+\overline{\omega}_2(\partial_1,\overline{\nabla}_{\partial_k}\partial_2)]=-\lambda^2\eta_1(h_{1k1}+h_{2k2})=-\lambda^2\eta_1H_k,
    \\\nonumber \partial_j\eta_1&=-\frac{1}{\lambda}H_j, 
    \\\partial_j\partial_k(\lambda)&=(2\lambda^3-\lambda)H_jH_k-\lambda^2\eta_1H_{k,j}.
\end{align}
We then have a simplified evolution formula of the second fundamental form.
\begin{corollary} Using the above coordinate, the evolution formula second fundamental form of special $\textrm{hyperk}\Ddot{a}\textrm{hler}$ flow is given by
\begin{align*}
    \frac{\partial}{\partial t}h_{ijk}&\nonumber=\lambda^2\Big\{ \Delta h_{ijk}+(\overline{\nabla}_l\overline{R})_{jlk\overline{i}}+(\overline{\nabla}_j\overline{R})_{kll\overline{i}}+h_{ikl}\overline{R}_{jmml}+h_{ijl}\overline{R}_{kmml}+h_{ljk}\overline{R}_{lmmi}
    \\&\nonumber\kern1em+2h_{l jm}\overline{R}_{kmli}+2h_{lkm}\overline{R}_{jmli}+2h_{iml}\overline{R}_{jmlk}+h_{mkr}(h_{mlr}h_{ilj}-h_{mlj}h_{ilr})+h_{iml}(h_{rjk}h_{rlm}-h_{rjm}h_{rkl})
    \\&\nonumber\kern1em +h_{ikm}(h_{rjl}h_{rlm}-h_{rjm}H_{r})-(H_mh_{mil}h_{jkl}+H_mh_{mkl}h_{ijl})
    \\&\nonumber\kern1em-\lambda\eta_1(H_lh_{ijk,l}+h_{jkl}H_{l,i}++h_{ijl}H_{l,k}+h_{ikl}H_{l,j}+2H_kH_{i,j}+2H_jH_{i,k}+2H_iH_{k,j})
    \\&\kern1em+(3\lambda^2-2)(H_iH_lh_{jkl}+H_kH_lh_{ijl}+H_jH_lh_{ikl}+2H_iH_jH_k)\Big\}.
\end{align*}
\end{corollary}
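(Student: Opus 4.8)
The plan is to specialize the general evolution equation~(\ref{EVh}) to the $K$-adapted frame $\{\partial_1,\partial_2,\nu_1,\nu_2\}$, $\nu_a=K\partial_a$, and then remove every derivative of $\lambda$ and $\eta_1$ by means of~(\ref{Lam}). Four facts organize the computation: (i) $\overline{\nabla}K=0$, so that $\overline{\nabla}_X\nu_a=K\overline{\nabla}_X\partial_a$ and $K$ acts as an isometry under $\overline{g}$; (ii) $\{x^1,x^2\}$ is normal for the induced metric at $p$, so the tangential Christoffel symbols vanish there and $\overline{\nabla}_{\partial_j}\partial_i=h_{cij}\nu_c$ at $p$; (iii) the Lagrangian condition $f^\star\overline{\omega}_3=0$ together with (i) makes the cubic form $h_{ijk}=\overline{g}(\nu_i,\overline{\nabla}_{\partial_j}\partial_k)$ totally symmetric in $i,j,k$; and (iv) the algebraic identity $\eta_1^2=1-\lambda^{-2}$ from the Lemma above. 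With these in hand the proof is a substitute-and-collect argument, the only genuine analysis being confined to the two connection terms on the fourth line of~(\ref{EVh}).

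First I would dispose of those two connection terms. For $\lambda\lambda_k\,\overline{g}(\overline{\nabla}_{\partial_k}\nu_\alpha,\nu_\beta)h_{\beta ij}$, fact (i) gives $\overline{g}(\overline{\nabla}_{\partial_k}\nu_\alpha,\nu_\beta)=\overline{g}(\overline{\nabla}_{\partial_k}\partial_\alpha,\partial_\beta)$, which is a tangential Christoffel symbol of the induced connection and hence vanishes at $p$; the term drops out entirely. For $\lambda^2\,\overline{g}(\overline{\nabla}_H\nu_\alpha,\overline{\nabla}_{\partial_j}\partial_i)$, I replace $\overline{\nabla}_{\partial_j}\partial_i$ by $h_{cij}\nu_c$ using (ii), then use (i) and the Weingarten equation to evaluate $\overline{g}(\overline{\nabla}_H\nu_\alpha,\nu_c)=\overline{g}(\overline{\nabla}_H\partial_\alpha,\partial_c)=-H_b h_{b\alpha c}$ at $p$, turning the term into a contraction of $H$ with two copies of $h$. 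Combining this with the $-\lambda^2 H_\beta h_{\beta il}h_{\alpha jl}$ already present inside the braces, and invoking the total symmetry (iii), produces the symmetric pair $-\lambda^2(H_m h_{mil}h_{jkl}+H_m h_{mkl}h_{ijl})$. Extracting the correct bracket structure and signs here, while keeping track of how the single index set $\{1,2\}$ plays both a normal and a tangent role through $\nu_a=K\partial_a$, is where I expect the real bookkeeping difficulty to sit.

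The remainder is pure substitution. Writing $\lambda_k=-\lambda^2\eta_1 H_k$ and $\lambda_{jk}=(2\lambda^3-\lambda)H_jH_k-\lambda^2\eta_1 H_{k,j}$ and using $\lambda^4\eta_1^2=\lambda^4-\lambda^2$, each coefficient on lines four and five of~(\ref{EVh}) collapses after pulling out a common $\lambda^2$: for instance $\lambda_j\lambda_k+\lambda\lambda_{jk}=\lambda^2[(3\lambda^2-2)H_jH_k-\lambda\eta_1 H_{k,j}]$, while $\lambda\lambda_k h_{\alpha ij,k}=\lambda^2(-\lambda\eta_1 H_k h_{\alpha ij,k})$, $2\lambda\lambda_i H_{\alpha,j}=\lambda^2(-2\lambda\eta_1 H_i H_{\alpha,j})$, and $2(\lambda_i\lambda_j+\lambda\lambda_{ij})H_\alpha=\lambda^2[2(3\lambda^2-2)H_iH_jH_\alpha-2\lambda\eta_1 H_\alpha H_{j,i}]$. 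After relabelling $(\alpha,i,j)\mapsto(i,j,k)$ in accordance with $\nu_a=K\partial_a$, these assemble precisely into the trailing two lines of the claimed formula, namely the $-\lambda\eta_1(\cdots)$ block and the $(3\lambda^2-2)(\cdots)$ block.

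Finally the curvature terms and the intrinsic cubic-$h$ terms inside the braces of~(\ref{EVh}) carry over essentially verbatim, the only change being notational: a normal slot in $\overline{R}$ or in $h$ is now written with the same label as the corresponding tangent direction through $\nu_i=K\partial_i$, which is recorded by the overbar (as in $\overline{R}_{jlk\overline{i}}$), and a handful of signs are normalized using the antisymmetries $\overline{R}_{ABCD}=-\overline{R}_{BACD}=-\overline{R}_{ABDC}$ to bring $\overline{R}_{jkkl}$-type contractions into the displayed $\overline{R}_{jmml}$-type form. Collecting all the braces then yields the stated evolution equation. The main obstacle throughout is not any single estimate but the sustained index bookkeeping, compounded by the dual normal/tangent role of the indices under $K$ and by the need to apply the total symmetry (iii) at exactly the right moments.
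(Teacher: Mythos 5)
Your overall strategy is the paper's own (implicit) route: the paper offers no separate argument for this corollary, and the intended proof is exactly the specialize-and-substitute computation you describe — restrict (\ref{EVh}) to the frame $\{\partial_1,\partial_2,K\partial_1,K\partial_2\}$, use $\overline{\nabla}K=0$ and the total symmetry of $h_{ijk}$ coming from the Lagrangian condition, and eliminate $\lambda_k,\lambda_{jk}$ via (\ref{Lam}) and $\eta_1^2=1-\lambda^{-2}$. Your coefficient computations are right: $\lambda_j\lambda_k+\lambda\lambda_{jk}=\lambda^2[(3\lambda^2-2)H_jH_k-\lambda\eta_1H_{k,j}]$ is exactly what produces the $(3\lambda^2-2)$ and $-\lambda\eta_1$ blocks.

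There is, however, one concrete step where your accounting would not land on the stated formula. Track which terms of the two trailing blocks you actually generate: from $(\lambda_j\lambda_k+\lambda\lambda_{jk})h_{\alpha ik}+(\lambda_i\lambda_k+\lambda\lambda_{ik})h_{\alpha jk}$ in (\ref{EVh}) you obtain (after relabelling) only the two contributions $[(3\lambda^2-2)H_kH_l-\lambda\eta_1H_{l,k}]h_{ijl}$ and $[(3\lambda^2-2)H_jH_l-\lambda\eta_1H_{l,j}]h_{ikl}$, whereas the corollary contains a \emph{third} symmetrized contribution $[(3\lambda^2-2)H_iH_l-\lambda\eta_1H_{l,i}]h_{jkl}$, i.e.\ $\lambda^{-2}(\lambda_i\lambda_l+\lambda\lambda_{il})h_{jkl}$ with $i$ the index that was normal. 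Under your treatment this term never appears: you declare the first connection term $\lambda\lambda_k\overline{g}(\overline{\nabla}_{e_k}e_\alpha,e_\beta)h_{\beta ij}$ to vanish at $p$ and reduce the second to $-\lambda^2H_bh_{b\alpha c}h_{cij}$, so the terms $h_{jkl}H_{l,i}$ and $H_iH_lh_{jkl}$ in the statement are left unaccounted for. The missing contribution comes from the fact that the normal frame $\nu_a=Kf_*\partial_a$ moves with the flow: the frame-derivative term in the derivation of (\ref{EVh}) must here be read as $\overline{g}(\overline{\nabla}_{\partial_t}\nu_\alpha,\overline{\nabla}_{e_j}e_i)$ with $\overline{\nabla}_{\partial_t}\nu_\alpha=K\overline{\nabla}_{\partial_\alpha}(\lambda\nabla\lambda+\lambda^2H)$, and the tangential part of $\overline{\nabla}_{\partial_\alpha}(\lambda\nabla\lambda)$ contributes $(\lambda_\alpha\lambda_c+\lambda\lambda_{\alpha c})h_{cij}$, which is precisely the third symmetrization term. (The $\lambda^2H$ part of the same expression reproduces your $-\lambda^2H_mh_{m\alpha c}h_{cij}$.) You should replace your interpretation of the two connection terms by this moving-frame computation; the rest of your argument then goes through, modulo the sign and index normalizations of the curvature contractions that you already flag.
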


\begin{corollary}
The norm of the second fundamental form evolves by  
\begin{align}\label{A}
  \frac{\partial}{\partial t}|A|^2&\nonumber=\lambda^2\Big\{\Delta|A|^2-2|\nabla A|^2+2h_{ijk}((\overline{\nabla}_l\overline{R})_{jlk\overline{i}}+(\overline{\nabla}_j\overline{R})_{kll\overline{i}})+6h_{ijk}h_{ikl}\overline{R}_{jmml}+12h_{ijk}h_{ljm}\overline{R}_{kmli}
    \\&\nonumber\kern1em +6h_{ijk}h_{mkr}h_{mlr}h_{ilj}-4h_{ijk}h_{mkr}h_{mlj}h_{ilr}-2\lambda\eta_1H_lh_{ijk,l}
    \\&\kern1em-12\lambda\eta_1h_{ijk}H_kH_{i,j}+4(3\lambda^2-2)h_{ijk}H_iH_jH_k\Big\}.
\end{align} 
\end{corollary}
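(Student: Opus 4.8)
The plan is to mirror the computation of $\frac{\partial}{\partial t}|A|^2$ carried out in the previous (general) corollary, but now substituting the simplified evolution formula for $h_{ijk}$ of the special $H$-flow together with the special-coordinate identities \eqref{Lam}. As in that corollary I would start from
\begin{align*}
\frac{\partial}{\partial t}|A|^2 = 2\Big(\frac{\partial}{\partial t}g^{kn}\Big)h_{ijk}h_{ijn} + 2h_{ijk}\frac{\partial}{\partial t}h_{ijk},
\end{align*}
insert the metric evolution $\frac{\partial}{\partial t}g^{kn} = -2(\lambda_k\lambda_n + \lambda\lambda_{kn}) + 2\lambda^2 H_m h_{mkn}$ (derived from Proposition \ref{P2.7} exactly as in the previous corollary) into the first term, and the simplified formula for $\frac{\partial}{\partial t}h_{ijk}$ into the second. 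The term $2h_{ijk}\Delta h_{ijk}$ is then traded for $\Delta|A|^2 - 2|\nabla A|^2$ via $\Delta|A|^2 = 2|\nabla A|^2 + 2h_{ijk}\Delta h_{ijk}$, producing the $\Delta|A|^2 - 2|\nabla A|^2$ at the head of \eqref{A}.

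The scalar coefficients $3\lambda^2 - 2$ and $\lambda\eta_1$ are produced by one clean substitution. Using \eqref{Lam}, namely $\lambda_k = -\lambda^2\eta_1 H_k$ and $\lambda_{kn} = (2\lambda^3 - \lambda)H_kH_n - \lambda^2\eta_1 H_{n,k}$, together with $\eta_1^2 = 1 - \lambda^{-2}$, I would first record the identity
\begin{align*}
\lambda_k\lambda_n + \lambda\lambda_{kn} = \lambda^2(3\lambda^2 - 2)H_kH_n - \lambda^3\eta_1 H_{n,k}.
\end{align*}
Feeding this into the first term and factoring out the overall $\lambda^2$ immediately yields the factor $4(3\lambda^2-2)$ in front of the cubic-$H$ term and the $\lambda\eta_1$-weighted derivative contributions. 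The surviving derivative terms $-2\lambda\eta_1 H_l h_{ijk,l}$ and $-12\lambda\eta_1 h_{ijk}H_kH_{i,j}$ then arise by combining this with the $H_{,\,}$-terms already present in $\frac{\partial}{\partial t}h_{ijk}$ (the line $-\lambda\eta_1(H_lh_{ijk,l} + h_{jkl}H_{l,i} + \cdots)$), contracted against $2h_{ijk}$ and simplified.

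For the curvature terms I would contract the $(\overline\nabla\overline R)$- and $\overline R$-terms of $\frac{\partial}{\partial t}h_{ijk}$ with $2h_{ijk}$ and collapse them using the pair and first-Bianchi symmetries of $\overline R$, the symmetry of $h_{ijk}$, and the identification $\nu_i = K\partial_i$; several of the separate $\overline R$-contractions coincide after relabelling and merge into the displayed coefficients $6h_{ijk}h_{ikl}\overline R_{jmml}$ and $12h_{ijk}h_{ljm}\overline R_{kmli}$.

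The genuinely laborious step, and the one I expect to be the main obstacle, is the reduction of the quartic-in-$h$ terms. The cubic terms of $\frac{\partial}{\partial t}h_{ijk}$, the $H$-weighted terms $-(H_mh_{mil}h_{jkl} + H_mh_{mkl}h_{ijl})$, and the two metric-evolution contributions $H_kH_n h_{ijk}h_{ijn}$ and $H_m h_{mkn}h_{ijk}h_{ijn}$ are all quartic once one remembers $H_i = h_{ijj}$. To show that they collapse to exactly the two $H$-free quartics $6h_{ijk}h_{mkr}h_{mlr}h_{ilj} - 4h_{ijk}h_{mkr}h_{mlj}h_{ilr}$ together with the single surviving $4(3\lambda^2-2)h_{ijk}H_iH_jH_k$, I would exploit the full symmetry of $h_{ijk}$ (the surface is Lagrangian with respect to $\omega_K$, so $h_{ijk} = \overline g(K\partial_i, \overline\nabla_{\partial_j}\partial_k)$ is symmetric in all three indices) along with $H_i = h_{ijj}$. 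Matching every sign and numerical coefficient will require careful tracking of the many index contractions, and this is where an error is most likely to creep in.
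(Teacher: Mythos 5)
Your proposal is correct and follows essentially the route the paper intends: the corollary is stated without its own proof, as the immediate consequence of contracting the preceding simplified evolution formula for $h_{ijk}$ with $2h_{ijk}$, adding the metric-evolution contribution $\frac{\partial}{\partial t}g^{kn}=-2(\lambda_k\lambda_n+\lambda\lambda_{kn})+2\lambda^2H_mh_{mkn}$, and trading $2h_{ijk}\Delta h_{ijk}$ for $\Delta|A|^2-2|\nabla A|^2$ --- exactly the manipulation carried out in the proof of the general corollary. Your key identity $\lambda_k\lambda_n+\lambda\lambda_{kn}=\lambda^2(3\lambda^2-2)H_kH_n-\lambda^3\eta_1H_{n,k}$ (from (\ref{Lam}) and $\eta_1^2=1-\lambda^{-2}$) and your appeal to the full symmetry of $h_{ijk}$, which holds because the Lagrangian condition with respect to $\omega_K$ is preserved and $K$ is parallel, are precisely the ingredients the paper uses implicitly to produce the coefficients $4(3\lambda^2-2)$, $-12\lambda\eta_1$, and the two $H$-free quartics.
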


Using the technique of Huisken \cite{H1},\cite{H2}, we deduce the argument about the long time existence of $H$-flow.

\begin{lemma}\label{L 2.14}
Let $f_t$ be a solution of the $H$-flow on $[0,T)$, for $0\leq T\leq \infty$. Suppose that $\lambda$ and $|\overline{\nabla}^k\lambda|$ are uniformly bounded on $f_t(S)$ for any positive integer $k$.
If there exists a constant  $\alpha_0$ such that 
\begin{align*}
    \sup_{f_t(S)}|A|^2\leq \alpha_0 \quad \textrm{for $t\in[0,T)$ }.
\end{align*}
Then there exists $\alpha_k$ such that
\begin{align*}
    \sup_{f_t(S)}|\nabla^kA|^2\leq \alpha_k \quad \textrm{for $t\in[0,T)$ }.
\end{align*}
\end{lemma}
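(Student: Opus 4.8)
The plan is to run the standard Huisken bootstrapping scheme, adapted to the degenerate-parabolic factor $\lambda^2$ and to the extra terms carrying covariant derivatives of $\lambda$. I would argue by induction on $k$, the base case $k=0$ being exactly the hypothesis $\sup|A|^2\le\alpha_0$. The order of the induction matters: I would bound $|\nabla A|$ first, then $|\nabla^2 A|$, and so on, so that at stage $m$ all lower-order quantities $|\nabla^j A|$ with $j\le m-1$ are already known to be bounded. Throughout I take $S$ closed, so the parabolic maximum principle applies on each time-slice without cut-offs, and I read the hypothesis on $\lambda$ as uniform parabolicity, i.e. $0<\lambda_0\le\lambda\le\Lambda$ together with $|\nabla^j\lambda|\le\Lambda_j$ for all $j$ (for an immersion $\lambda>0$ pointwise, and a uniform positive lower bound is what keeps the flow uniformly parabolic). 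The ambient curvature $\overline R$ and its covariant derivatives are bounded on the region of $M$ swept out by the flow.

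The first substantive step is to produce a general evolution inequality for $w_m:=|\nabla^m A|^2$. I would differentiate the evolution equation (\ref{EVh}) for $A$ a total of $m$ times, commuting $\nabla^m$ past $\partial_t$ and past $\lambda^2\Delta$; the undifferentiated model for the resulting structure is the $|A|^2$-equation (\ref{A}). Writing $\ast$ for a metric contraction, the outcome is schematically
\begin{align*}
\frac{\partial}{\partial t}w_m=\lambda^2\,\Delta w_m-2\lambda^2\,|\nabla^{m+1}A|^2+2\big\langle\nabla^m A,\,R_m\big\rangle,
\end{align*}
where $R_m$ collects three kinds of terms: (i) contractions of $\overline\nabla^{\,p}\overline R$ ($p\le m$) with the $\nabla^j A$ ($j\le m$); (ii) the cubic reaction terms $\sum_{i+j+k=m}\nabla^i A\ast\nabla^j A\ast\nabla^k A$, each carrying a factor $\lambda^2$; and (iii) the genuinely new terms from the tangential part of the flow, of the form $\lambda^{p}(\nabla^{a_1}\lambda)\cdots(\nabla^{a_r}\lambda)\ast\nabla^b A$ with $b\le m+1$. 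The crucial bookkeeping point — and the step I expect to be the main obstacle — is to check that the single highest-order contribution, which is linear in $\nabla^{m+1}A$, always appears with a coefficient of schematic form $\lambda\,\nabla\lambda$, so that commuting with the factor $\lambda^2$ produces matching powers of $\lambda$. Tracking these $\lambda$-powers (rather than the detailed tensorial shape) is what makes the subsequent absorption clean and is precisely the part of the computation that differs from pure mean curvature flow.

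Next I would estimate $\langle\nabla^m A,R_m\rangle$. Using the inductive bounds $|\nabla^j A|\le\alpha_j$ for $j\le m-1$, the bound on $|A|$, and the bounds on $\overline\nabla^{\,p}\overline R$ and on $\nabla^j\lambda$, every term in $R_m$ is of one of three types: bounded by a constant; bounded by $C\,w_m$ (this includes the top cubic term $A\ast A\ast\nabla^m A$, giving $|A|^2 w_m$); or of the form $(\text{bounded})\cdot|\nabla^{m+1}A|$ coming from type (iii). For the last type, Young's inequality gives, with matching $\lambda$-powers,
\begin{align*}
2\big\langle\nabla^m A,\ \lambda\,\nabla\lambda\ast\nabla^{m+1}A\big\rangle\le\lambda^2\,|\nabla^{m+1}A|^2+C\,w_m,
\end{align*}
so the bad top-order term is absorbed into the good negative term $-2\lambda^2|\nabla^{m+1}A|^2$. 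Collecting everything yields
\begin{align*}
\frac{\partial}{\partial t}w_m\le\lambda^2\,\Delta w_m-\lambda^2\,|\nabla^{m+1}A|^2+C\,(1+w_m).
\end{align*}

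Finally I would close the estimate with the maximum principle. If $T<\infty$, applying the maximum principle to $\max_S w_m(\cdot,t)$ together with Gronwall already gives $w_m\le(w_m(0)+1)e^{Ct}-1$ on $[0,T)$; this needs neither the lower bound on $\lambda$ nor the good $|\nabla^{m+1}A|^2$ term. For a genuinely time-independent bound (covering $T=\infty$) I would instead spend the previous level's negative term: from the stage-$(m-1)$ inequality and $w_{m-1}\le\alpha_{m-1}$ one has $\partial_t w_{m-1}\le\lambda^2\Delta w_{m-1}-2\lambda^2 w_m+C$, so for $\Phi=w_m+N\,w_{m-1}$ with $N$ large (chosen using $\lambda\ge\lambda_0$) and after converting $-c\,w_m$ into $-c\,\Phi+C'$ via $w_{m-1}\le\alpha_{m-1}$,
\begin{align*}
\frac{\partial}{\partial t}\Phi\le\lambda^2\,\Delta\Phi-c\,\Phi+C'.
\end{align*}
The maximum principle then bounds $\Phi$, hence $w_m$, by $\max\{\max_S\Phi(0),\,C'/c\}$ uniformly in $t$. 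This produces $\alpha_m$ and completes the induction.
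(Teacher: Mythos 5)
Your proposal is correct and follows essentially the same route as the paper: the paper's proof consists precisely of asserting the schematic evolution inequality $\partial_t|\nabla^k A|^2\le\lambda^2\Delta|\nabla^k A|^2+B(k)\sum_{a+b+c=k}|\nabla^aA||\nabla^bA||\nabla^cA||\nabla^kA|+C(k)$ and then deferring to Huisken's bootstrap, which is exactly the argument (absorption of the top-order term, then $\Phi=w_m+Nw_{m-1}$ with the maximum principle) that you write out in detail. Your explicit reading of ``$\lambda$ uniformly bounded'' as a two-sided bound $0<\lambda_0\le\lambda\le\Lambda$ is an interpretation the paper leaves implicit, but it is what the paper also needs for a time-independent constant when $T=\infty$, and it holds in the special $H$-flow application where $\lambda\ge 1$.
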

\begin{proof}
Since $|\overline{\nabla}^k\lambda|$ is bounded, it follows from (\ref{A}) that 

\begin{align*}
    \frac{\partial}{\partial t}|A|^2\leq \lambda^2\Delta |A|^2+K_1|A|^4+K_2
\end{align*}
and inductively there exists a constant $B(k),C(k)$ depends on the bound $C$ such that 
\begin{align*}
    \frac{\partial}{\partial t}|\nabla^k A|^2\leq \lambda^2\Delta |\nabla^k A|^2+B(k)\sum_{a+b+c=k}|\nabla^a A||\nabla^b A||\nabla^c A||\nabla^k A|+C(k).
\end{align*}
Then, the remaining argument is just the same in the mean curvature flow case (c.f. \cite{H1} and \cite{H2}.).

\end{proof}

\begin{theorem}
Let $f_t$ be a solution of the $H$-flow on $[0,T)$. Suppose that $\lambda$ and $|\overline{\nabla}^k\lambda|$ are uniformly bounded on $f_t(S)$ for any positive integer $k$. If $T<\infty$, then
\begin{align*}
    \lim_{t\to T}\sup_{f_t(S)}|A|^2=\infty.
\end{align*}
\end{theorem}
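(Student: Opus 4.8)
The plan is to argue by contradiction, following the standard long-time existence criterion of Huisken for mean curvature flow, where throughout $T$ is understood as the maximal time of existence of the flow. Suppose the conclusion fails, so that $\limsup_{t\to T}\sup_{f_t(S)}|A|^2<\infty$; then there is a uniform bound $\sup_{f_t(S)}|A|^2\le \alpha_0$ on all of $[0,T)$. The first step is to upgrade this to uniform control of every covariant derivative of $A$: this is precisely the content of \Cref{L 2.14}, which, using the hypothesis that $\lambda$ and all $|\overline{\nabla}^k\lambda|$ are uniformly bounded, produces constants $\alpha_k$ with $\sup_{f_t(S)}|\nabla^k A|^2\le\alpha_k$ on $[0,T)$.

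Next I would control the metric and the immersion up to time $T$. From the evolution formula $\frac{\partial}{\partial t}g_{kl}=\nabla_k\nabla_l(\lambda^2)-2\lambda^2 H_\alpha h_{\alpha kl}$ of \Cref{P2.7}, the right-hand side is bounded uniformly on $[0,T)$ once $\lambda,\nabla\lambda,\nabla^2\lambda$ and $A$ are bounded. Integrating the resulting pointwise estimate $|\partial_t g_{kl}|\le C$ (equivalently, comparing the eigenvalues of $g(t)$ with those of $g(0)$) shows that the metrics $g(t)$ remain uniformly equivalent on $[0,T)$ and converge uniformly, as $t\to T$, to a nondegenerate limit metric $g(T)$. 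Since $\frac{\partial f}{\partial t}=\lambda\nabla\lambda+\lambda^2 H$ is uniformly bounded, each $f_t(p)$ is Cauchy as $t\to T$; combining this with the spatial estimates coming from the $\alpha_k$, converted into bounds in a fixed ambient chart via the uniform equivalence of metrics, yields $C^\infty$ convergence $f_t\to f_T$ to a smooth immersion $f_T$.

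Finally, since $f_T$ is a smooth immersion, the short-time existence result (\cite{SW}, Proposition 2.2) applies with initial data $f_T$, producing a smooth solution of the $H$-flow on $[T,T+\varepsilon)$ for some $\varepsilon>0$. Concatenating this with $f_t$ on $[0,T)$ extends the flow smoothly past $T$, contradicting the maximality of the existence interval $[0,T)$. Hence the assumption was false and $\lim_{t\to T}\sup_{f_t(S)}|A|^2=\infty$.

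The main obstacle is the second step: converting the intrinsic, metric-dependent bounds $|\nabla^k A|^2\le\alpha_k$ into genuine $C^\infty$ control of the immersion $f$ in fixed ambient coordinates, and verifying that the limit $f_T$ is still an immersion, i.e.\ that $g(T)$ does not degenerate. This is exactly where the uniform equivalence of the evolving metrics is essential: it prevents the induced geometry from collapsing and permits transferring derivative estimates between the moving frame and a fixed background, just as in the mean curvature flow case. The extra tangential term $\lambda\nabla\lambda$ and the factor $\lambda^2$ specific to the $H$-flow are harmless here, since $\lambda$ and all its derivatives are assumed bounded.
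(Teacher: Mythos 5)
Your argument is correct and follows essentially the same route as the paper: contradiction, upgrading the $|A|$ bound to $|\nabla^k A|$ bounds via \Cref{L 2.14}, deducing $C^\infty$ convergence of $f_t$ to a limit $f_T$ as $t\to T$, and then invoking short-time existence to contradict maximality of $T$. Your second step, which verifies uniform equivalence of the evolving metrics so that the limit is a nondegenerate immersion, is a detail the paper's proof passes over silently, but it does not change the structure of the argument.
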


\begin{proof}
Suppose the theorem is false ,i.e., 
\begin{align*}
    \lim_{t\to T}\sup_{f_t(S)}|A|^2<\infty.
\end{align*}
For any point $p\in S$ and $0\leq t_1<t_2<T$
\begin{align*}
    |f(p,t_2)-f(p,t_1)|&\leq \int_{t_1}^{t_2}|\frac{\partial}{\partial t}f|dt=\int_{t_1}^{t_2}|\lambda\nabla\lambda+\lambda^2 H|dt
    \\&\leq C_0(t_2-t_1).
\end{align*}
Thus, for $t\to T$, $\{f_{t}(S)\}$ converge to a unique continuous limit $f_T(S)$. Moreover, by \Cref{L 2.14}
\begin{align*}
   |\nabla^kf(p,t_2)-\nabla^kf(p,t_1)|&\leq \int_{t_1}^{t_2}|\nabla^k(\lambda\nabla\lambda+\lambda^2 H)|dt
    \\&\leq C_k(t_2-t_1).
\end{align*}
This implies that $f_t(S)$ converges to $f_T(S)$ in the $C^\infty$-topology as $t\to T$. In view of short time existence, we know that there exists a solution for $t=T+\epsilon$ which contradicts the assumption that $T$ is maximal. 

\end{proof}

For the special $H$-flow case, $\lambda$ is bounded due to the evolution formula.  Due to (\ref{Lam}),
the boundedness of the derivative of $\lambda$ can be reduced to the boundedness of the second fundamental form. Thus,
\begin{corollary}
Let $f_t$ be a solution of the $H$-flow on $[0,T)$. If $T<\infty$, then
\begin{align*}
    \lim_{t\to T}\sup_{f_t(S)}|A|^2=\infty.
\end{align*}
\end{corollary}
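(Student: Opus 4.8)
The plan is to reduce the statement to the preceding theorem by showing that, \emph{for the special $H$-flow}, its two standing hypotheses---uniform bounds on $\lambda$ and on every $|\overline{\nabla}^k\lambda|$---are automatic once one assumes a contradiction bound on $|A|^2$. Concretely, I would suppose $T<\infty$ but that the conclusion fails, so that $\limsup_{t\to T}\sup_{f_t(S)}|A|^2<\infty$; then there is a constant $\alpha_0$ with $|A|^2\le\alpha_0$ on $[0,T)$, and the goal is to derive a contradiction from this.

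First I would bound $\lambda$ and $\eta_1$. By \Cref{P2.7} the function $u=\lambda^2$ satisfies $\partial_t u=\lambda^2(\Delta u-2\lambda^2|H|^2)$, whose reaction term is nonpositive; at a spatial maximum $\Delta u\le 0$, so the maximum principle forces $\sup_S\lambda$ to be nonincreasing and hence $\lambda\le\lambda(0)$ throughout. For a lower bound I use the defining constraint of $\mathcal{N}$: since $\eta_2=1/\lambda$ and $\overline{\omega}_2$ has comass one, $1/\lambda\le 1$, i.e.\ $\lambda\ge 1$; consequently $\eta_1^2=1-1/\lambda^2\in[0,1)$ is bounded as well. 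Note this step uses neither the bound on $|A|$ nor any derivative estimate.

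The heart of the argument is upgrading the $|A|$ bound to bounds on all $|\nabla^k A|$ and all $|\overline{\nabla}^k\lambda|$. The essential observation is that in the special flow the evolution equations for $A$ and its covariant derivatives (the simplified form following (\ref{Lam}), and (\ref{A}) for $|A|^2$) contain \emph{no} derivatives of $\lambda$: every $\lambda_k,\lambda_{ij}$ has been eliminated through (\ref{Lam}) in favor of the bounded quantities $\lambda,\eta_1$ together with $H$ and $\nabla H$. I would therefore run an interleaved induction. Assuming $|\nabla^j A|$ bounded for $j\le k-1$, formula (\ref{Lam}) and its higher-order iterates express $\overline{\nabla}^m\lambda$ as a polynomial in $\lambda,\eta_1$ and the covariant derivatives $\nabla^s H\sim\nabla^s A$ with $s\le m-1$, yielding bounds on $|\overline{\nabla}^m\lambda|$ for $m\le k$. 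With these coefficient bounds in hand, the Huisken-type reaction--diffusion inequality $\partial_t|\nabla^k A|^2\le\lambda^2\Delta|\nabla^k A|^2+B(k)\sum_{a+b+c=k}|\nabla^aA||\nabla^bA||\nabla^cA||\nabla^kA|+C(k)$ of \Cref{L 2.14} has coefficients controlled by $\sup\lambda$, $\sup|\eta_1|\le 1$ and the ambient curvature, so the same maximum-principle and interpolation argument bounds $|\nabla^k A|$. This closes the inductive step and, by (\ref{Lam}) once more, gives a uniform bound on $|\overline{\nabla}^{k+1}\lambda|$.

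Finally, with $\lambda$ and all $|\overline{\nabla}^k\lambda|$ now uniformly bounded, the hypotheses of the preceding theorem are satisfied; since $T<\infty$, that theorem yields $\lim_{t\to T}\sup_{f_t(S)}|A|^2=\infty$, contradicting $|A|^2\le\alpha_0$. The main obstacle is precisely the apparent circularity between the $\lambda$-derivative bounds (needed to run the bootstrap) and the $A$-derivative bounds (needed to deduce the $\lambda$-derivative bounds via (\ref{Lam})); the resolution is the off-by-one structure of (\ref{Lam}), in which each $\overline{\nabla}^m\lambda$ costs only covariant derivatives of $A$ of order $m-1$, so that the two families of estimates can be produced in lockstep. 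One must also check that the genuinely top-order term $|\nabla^{k+1}A|$ arising when the equation is differentiated enters only through the good negative term $-\lambda^2|\nabla^{k+1}A|^2$, or is absorbed into it by Young's inequality, which is what makes each inductive step a bona fide parabolic estimate.
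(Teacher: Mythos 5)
Your proposal is correct and follows essentially the same route as the paper: the paper's (very terse) justification is precisely that for the special flow $\lambda$ is bounded by the evolution formula and that, via (\ref{Lam}), the bounds on $|\overline{\nabla}^k\lambda|$ reduce to bounds on the second fundamental form and its derivatives, so the preceding theorem applies. Your write-up merely fills in the details the paper leaves implicit (the maximum-principle bound on $\lambda$, the lower bound $\lambda\ge 1$ from $\eta_2=1/\lambda\le 1$, and the off-by-one interleaved induction between $|\nabla^kA|$ and $|\overline{\nabla}^{k}\lambda|$), all of which are consistent with the paper's intent.
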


\section{Strongly Stable Submanifolds}

\subsection{Strongly stable conditions}

In this subsection, we review some properties of the strongly stable condition. Most of the materials can be found in \cite{WT2}. Let $M$ be a $m$-dimensional submanifold in $n$-dimensional manifold $N$ and $\overline{\nabla}$ be the Levi-civita connection on $N$ with respect to its Riemannian metric $\overline{g}$. We first define the following.

\begin{itemize}
    \item The partial Ricci operator is given by $\mathcal{R}(V)=\mathrm{tr}_M(R^N(\cdot,V)\cdot)^\perp$, where $V$ is a normal vector field.
\item The operator $\mathcal{A}:=\mathcal{S}^t\circ\mathcal{S}:TM^\perp\to TM^\perp$, where $\mathcal{S}^t$ is the transpose map of the shape operator $\mathcal{S}$.
\end{itemize}
Recall that the second variation of the area functional is the normal direction $V$ is given by
\begin{align*}
    \int_M|\nabla^\perp V|^2+g(\mathcal{R}(V),V)-\overline{g}(\mathcal{A}(V),V),
\end{align*}
we then give the following definition.

\begin{defn}
A minimal immersed submanifold $M$ in $N$ is called stable if 
\begin{align*}
    \int_M|\nabla^\perp V|^2+\overline{g}(\mathcal{R}(V),V)-\overline{g}(\mathcal{A}(V),V)\geq 0 \quad \textrm{ for any compactly support normal vector $V$.}
\end{align*}
Moreover, $M$ is strongly stable if $\mathcal{R}-\mathcal{A}$ is a pointwise positive operator on normal bundle ,i.e., if there exists $c>0$ such that
\begin{align}
    \sum_{\alpha,\beta,i}R^N_{i\alpha i\beta}V^\alpha V^\beta-\sum_{\alpha,\beta,i,j}h_{\alpha ij}h_{\beta ij}V^\alpha V^\beta\geq c\sum_{\alpha}(V^\alpha)^2, \label{st}
\end{align}
for any normal vector $V=V^\alpha e_{\alpha}$.
\end{defn}

In this paper, we focus on the case when $N$ is a hyperkähler 4-manifold which is equivalent to the Calabi-Yau 4-manifold. Motivated by \cite{WT2}, we look for some examples of Lagrangian surfaces to be examples of strongly stable surfaces. Recall that 
\begin{defn}
A $n$-dimensional submanifold $L$ in a Calabi-Yau manifold $N$ is called a special Lagrangian submanifold if $\omega|_L\equiv 0$ and $\textrm{Im}(\Omega)|_L\equiv 0$, where $\Omega$ is a nowhere vanishing holomorphic form.
\end{defn}

\begin{defn}
A $\frac{n}{2}$-dimensional submanifold $L$ of a hyperk\"{a}hler manifold $N$ is called complex Lagrangian if it is a Lagrangian with respect to some holomorphic symplectic form.
\end{defn}

In \cite{H,H'}, Hitchin proved that
\begin{theorem}
A complex Lagranigan submanifold $L$ in a hyperk\"{a}hler manifold $N$ is a special Lagrangian submanifold. 
\end{theorem}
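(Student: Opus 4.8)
The plan is to reduce Hitchin's statement to a pointwise computation on the tangent spaces of $L$, using the quaternionic relations $IJ=K$, $JK=I$, $KI=J$ together with the compatibility of $I,J,K$ with $\overline{g}$, and then to exhibit an explicit Calabi--Yau structure on $N$ for which $L$ satisfies the two defining conditions of a special Lagrangian submanifold. Throughout I write $\omega_I,\omega_J,\omega_K$ for the three Kähler forms, so that $\omega_I(u,v)=\overline{g}(Iu,v)$ and similarly for $J,K$.

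First I would unwind the hypothesis. By definition $L$ is Lagrangian with respect to some holomorphic symplectic form, and after relabeling the triple $(I,J,K)$ I may assume this form is $\theta=\omega_J+i\omega_K$, the holomorphic $(2,0)$-form attached to the complex structure $I$. The complex Lagrangian condition $\theta|_L\equiv 0$ is then equivalent to the two real conditions $\omega_J|_L\equiv 0$ and $\omega_K|_L\equiv 0$, which will be the only input needed from the hypothesis.

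Next comes the key pointwise step, which I would carry out in the $4$-dimensional case of interest (the general case being analogous). Fix $p\in L$ and an orthonormal basis $\{e_1,e_2\}$ of $T_pL$. From $\omega_J(e_1,e_2)=\overline{g}(Je_1,e_2)=0$ and $\omega_K(e_1,e_2)=\overline{g}(Ke_1,e_2)=0$, together with the skew-symmetry identities $\overline{g}(Ie_1,e_1)=\overline{g}(Je_1,e_1)=\overline{g}(Ke_1,e_1)=0$, I conclude that $e_2$ is orthogonal to each of $e_1,Je_1,Ke_1$. Since $\{e_1,Ie_1,Je_1,Ke_1\}$ is an orthonormal basis of $T_pN$ (the quaternionic frame), this forces $e_2=\pm Ie_1$. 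Hence $T_pL$ is $I$-invariant, so $L$ is a holomorphic curve with respect to $I$ and $\omega_I|_L$ is its nowhere-vanishing area form. Finally, I would regard $N$ as Calabi--Yau via the complex structure $J$, with Kähler form $\omega_J$ and the nowhere-vanishing holomorphic volume form $\Omega:=\omega_I-i\omega_K$, which is a nonzero constant multiple of the holomorphic $(2,0)$-form for $J$. Then $\omega_J|_L\equiv 0$ gives the Lagrangian condition and $\im(\Omega)|_L=-\omega_K|_L\equiv 0$ gives the special condition, while $\operatorname{Re}(\Omega)|_L=\omega_I|_L$ is the area form, so $\Omega$ calibrates $L$; thus $L$ is special Lagrangian (and, as a byproduct, minimal).

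The step that requires care is the bookkeeping of which combination of $\omega_I,\omega_J,\omega_K$ is the holomorphic $(2,0)$-form for a prescribed complex structure, and the choice of constant phase for $\Omega$ that transfers the vanishing from $\operatorname{Re}$ to $\im$; this is precisely where the quaternionic sign conventions must be pinned down, and it is the only genuinely delicate point. For the general $4k$-dimensional case one replaces $\Omega$ by the appropriate power $(\omega_K+i\omega_I)^{\wedge k}$ of the holomorphic symplectic form for $J$: since $\omega_K|_L\equiv 0$, this power restricts to $L$ as a constant multiple of $\omega_I^{\wedge k}$, so after a constant phase rotation its imaginary part again vanishes and the same conclusion follows.
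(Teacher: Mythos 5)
Your argument is correct, but note that the paper does not actually prove this statement: it is quoted as Hitchin's theorem with a citation to \cite{H,H'} and used as a black box. What you have written is essentially the standard hyperkähler-rotation proof, and it checks out. The pointwise step is sound: with $IJ=K$ the form $\omega_J+i\omega_K$ is indeed the holomorphic symplectic form for $I$, the frame $\{e_1,Ie_1,Je_1,Ke_1\}$ is orthonormal by skew-adjointness of $I,J,K$, and in real dimension $4$ the orthogonality of $e_2$ to $e_1,Je_1,Ke_1$ forces $e_2=\pm Ie_1$, so $T_pL$ is $I$-invariant and $\omega_I|_L$ is (up to a global sign fixed by the orientation of a connected $L$) the area form. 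Your choice $\Omega=\omega_I-i\omega_K=-i(\omega_K+i\omega_I)$ is a constant phase rotation of the holomorphic $(2,0)$-form for $J$, so $\omega_J|_L\equiv 0$ and $\im\Omega|_L=-\omega_K|_L\equiv 0$ give exactly the two conditions in the paper's definition of special Lagrangian, and $\operatorname{Re}\Omega|_L=\omega_I|_L$ calibrates $L$, yielding minimality as a bonus (which is the property the paper actually needs downstream). The only place where your write-up is thinner than it should be is the reduction of the general $4k$-dimensional case to "analogous": there the correct replacement for the frame argument is the observation that the Lagrangian conditions give $JT_pL=KT_pL=(T_pL)^{\perp}$ (both are $2k$-dimensional subspaces orthogonal to $T_pL$), whence $IT_pL=JKT_pL=J(T_pL)^{\perp}=T_pL$; with that in hand your expansion of $(\omega_K+i\omega_I)^{\wedge k}$ restricted to $L$ works as stated. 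With that one sentence added, your proof is a complete, self-contained substitute for the citation.
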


Note that the special Lagrangain submanifolds are minimal \cite{HL}, we conclude the following. 
\begin{corollary}
  A minimal Lagrangian surface $L$ in a hyperk\"{a}hler 4-manifold is strongly stable if $\Rc^L$ is positive. In particular, a complex Lagrangian surface $L$ in a hyperk\"{a}hler 4-manifold is strongly stable if $\Rc^L$ is positive.
\end{corollary}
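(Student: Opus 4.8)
The plan is to reduce the strong stability inequality (\ref{st}) to the positivity of the intrinsic Ricci curvature $\Rc^L$, using the Lagrangian structure together with the fact that a hyperkähler $4$-manifold is Ricci-flat. It suffices to prove the first assertion: the complex Lagrangian case then follows because a complex Lagrangian surface is special Lagrangian by Hitchin's theorem, hence minimal (by Harvey--Lawson) and Lagrangian, so it is in particular a minimal Lagrangian surface. Thus I would assume that $L$ is a compact minimal Lagrangian surface, say Lagrangian with respect to $\overline{\omega}_3$, and work at a point $p\in L$ with the adapted orthonormal frame $\{e_1,e_2\}$ for $TL$ and $\{e_3,e_4\}=\{Ke_1,Ke_2\}$ for $NL$ as in the lemma above. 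Since $K$ is a parallel isometry sending $TL$ onto $NL$, every normal vector is $V=Kv$ for a unique tangent $v$, and I will express both $\mathcal{R}$ and $\mathcal{A}$ in these terms.

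First I would record the two Lagrangian facts that drive the computation. Because $K$ is parallel and identifies $TL$ with $NL$, the second fundamental form is encoded by the \emph{fully symmetric} cubic tensor $h_{ijk}=\overline{g}(Ke_i,\overline{\nabla}_{e_j}e_k)$. Combined with minimality $\sum_i h_{iik}=0$, this pins $A$ down to two parameters $a=h_{111}$, $b=h_{222}$, with $h_{122}=h_{212}=h_{221}=-a$ and $h_{112}=h_{121}=h_{211}=-b$, so that $|A|^2=4(a^2+b^2)$. Substituting this into $\langle\mathcal{A}(V),V\rangle=\sum_{i,j,\alpha,\beta}h_{\alpha ij}h_{\beta ij}V^\alpha V^\beta$, the cross terms cancel by minimality and the quadratic form collapses to $\langle\mathcal{A}(V),V\rangle=\tfrac12|A|^2\,|V|^2$; that is, $\mathcal{A}=\tfrac12|A|^2\cdot\mathrm{Id}$ on $NL$.

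Next I would compute the partial Ricci term $\langle\mathcal{R}(V),V\rangle=\sum_i \overline{R}_{i\alpha i\beta}V^\alpha V^\beta=\sum_i\overline{R}(e_i,V,e_i,V)$ for $V=Kv$. Writing the ambient Ricci tensor as a trace over the full frame and using $\overline{\mathrm{Ric}}\equiv 0$ (hyperkähler), the normal sum $\sum_\alpha\overline{R}(e_\alpha,V,e_\alpha,V)$ can be traded for the negative of the tangential one. The Kähler identities $\overline{R}(KX,KY,Z,W)=\overline{R}(X,Y,Z,W)$ and $\overline{R}(X,Y,KZ,KW)=\overline{R}(X,Y,Z,W)$ then convert every surviving term into $\overline{R}$ evaluated on the tangent plane $T_pL$, yielding $\langle\mathcal{R}(V),V\rangle=\overline{K}(e_1,e_2)\,|V|^2$, where $\overline{K}(e_1,e_2)$ denotes the ambient sectional curvature of $T_pL$. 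Finally, the Gauss equation for the surface $L$, after inserting the two-parameter form of $A$ (so that $\langle A_{11},A_{22}\rangle-|A_{12}|^2=-\tfrac12|A|^2$), gives $\overline{K}(e_1,e_2)=K^L+\tfrac12|A|^2$, where $K^L$ is the Gauss curvature of $L$.

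Putting the three pieces together, the $\tfrac12|A|^2$ contributions cancel exactly, so that under the isometric identification $V=Kv$ one obtains $\langle\mathcal{R}(V),V\rangle-\langle\mathcal{A}(V),V\rangle=K^L|V|^2=\Rc^L(v,v)$. Hence $\mathcal{R}-\mathcal{A}$ is a positive operator precisely when $\Rc^L$ is, and on the compact surface $L$ positivity of $\Rc^L$ furnishes the uniform constant $c=\min_L K^L>0$ required in (\ref{st}), proving strong stability. The main obstacle I anticipate is the sign and symmetry bookkeeping in the third step: one must arrange the curvature symmetries so that the second-fundamental-form terms produced by Ricci-flatness and by the Gauss equation line up to cancel the $\mathcal{A}$-term exactly. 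The full symmetry of $h$ (a Lagrangian phenomenon) together with the Kähler curvature identities is exactly what makes this cancellation clean, and it is worth verifying that only Ricci-flatness, rather than the finer hyperkähler curvature structure, is actually used.
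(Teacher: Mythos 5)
Your argument is correct and follows essentially the same route as the paper: use Ricci-flatness of the hyperkähler metric to trade the normal trace of the curvature for the negative of the tangential one, the parallel complex structure to identify normal and tangential curvature terms, and the Gauss equation together with minimality and the full symmetry of the Lagrangian second fundamental form to cancel the $\mathcal{A}$-term, leaving exactly $\Rc^L$. The only difference is presentational — you make the cancellation explicit by reducing $A$ to the two parameters $a,b$ and showing $\mathcal{A}=\tfrac12|A|^2\,\mathrm{Id}$, where the paper performs the same cancellation directly in index notation via $\sum_i\langle A(e_i,e_l),A(e_k,e_i)\rangle$.
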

\begin{proof}
Suppose $L$ is minimal and $\omega_1\big|_L=0$, we consider the orthonormal basis, $\{\partial_1,\partial_2,I\partial_1,I\partial_2\}$ and compute
\begin{align*}
    R^N(e_i,I(e_k),e_i,I(e_l))&=-R^N(I(e_i),I(e_k),I(e_i),I(e_l))
    \\&=-R^N(e_i,e_k,e_i,e_l)
    \\&=-R^L(e_i,e_k,e_i,e_l)-\langle A(e_i,e_i),A(e_k,e_l) \rangle+\langle A(e_i,e_l),A(e_k,e_i) \rangle,
\end{align*}
where we used the Gauss equation and the fact that $\Rc^N\equiv 0$. Therefore, (\ref{st}) reduces to 
\begin{align*}
    \Rc^L_{kl}V^kV^l.
\end{align*}
\end{proof}

\subsection{Eguchi-Hanson Space}

In this section, we explicitly find a strongly stable surface in a hyperk\"{a}hler 4-manifold. The Eguchi Hanson space is a non-compact, self-dual, asymptotically locally Euclidean (ALE) metric on the cotangent bundle of the 2-sphere $T^*S^2$. This metric is given by physicists Eguchi and Hanson \cite{EH}
\begin{align}
    &g_{EH}=(1-\frac{c}{r^4})^{-1}dr^2+r^2((\sigma^1)^2+(\sigma^2)^2)+r^2(1-\frac{c}{r^4})(\sigma^3)^2, \quad r>\sqrt[4]{c},
\end{align}
where $c$ is a constant, $\sigma^i$ are a left invariant one forms on $SU(2)$ and satisfy $d\sigma^i=2\epsilon^i_{jk}\sigma^j\wedge \sigma^k$. 

\begin{remark}
$r=\sqrt[4]{c}$ is a coordinate singularity. Let $\cosh{u}=\frac{r^2}{\sqrt{c}}$, then
\begin{align*}
    g_{EH}=\frac{\sqrt{c}}{4}\cosh{u}du^2+\sqrt{c}\cosh{u}((\sigma^1)^2+(\sigma^2)^2)+\sqrt{c}\sinh{u}\tanh{u}(\sigma^3)^2
\end{align*}
As $r\to\sqrt[4]{c}$, $g_{EH}\to \sqrt{c}((\sigma^1)^2+(\sigma^2)^2)=\frac{\sqrt{c}}{4}(d \theta^2+\sin^2{\theta}d\varphi^2)$, which is a standard sphere $S^2$ of radius $\frac{\sqrt{c}}{4}$.
\end{remark}

In the following, we use the Cartan's moving frame method to study the  hyperk\"{a}hler structure of Eguchi-Hanson metric. Let
\begin{align*}
    \omega^0=(1-\frac{c}{r^4})^{-\frac{1}{2}}dr, \quad \omega^1=r\sigma_1, \quad \omega^2=r\sigma_2, \quad \omega^3=(1-\frac{c}{r^4})^{\frac{1}{2}}\sigma_3,
\end{align*}
and $\{e_0,e_1,e_2,e_3\}$ be its dual frame. Taking $A=1-\frac{c}{r^4}$, the connection 1-forms are as follows.
\begin{align}
&\nonumber\omega_1^0=\frac{-A^{\frac{1}{2}}}{r}\omega^1, \quad \omega_2^0=\frac{-A^{\frac{1}{2}}}{r}\omega^2, \quad \omega_3^0=\frac{A^{\frac{1}{2}}-2A^{\frac{-1}{2}}}{r}\omega^3,
\\&\omega_2^1=\frac{2A^{\frac{-1}{2}}-A^{\frac{1}{2}}}{r}\omega^3, \quad \omega_3^1=\frac{-A^{\frac{1}{2}}}{r}\omega^2, \quad \omega_3^2=\frac{A^{\frac{1}{2}}}{r}\omega^1,
\end{align}
 and connection 2-forms $R_i^j=d\omega_i^j-\omega_i^k\wedge \omega^j_k$ are
\begin{align}
&\nonumber R_1^0=\frac{2A-2}{r^2}\omega^0\wedge \omega^1+\frac{2-2A}{r^2}\omega^2\wedge \omega^3=-R_3^2,
\\&\nonumber R_2^0=\frac{2A-2}{r^2}\omega^0\wedge \omega^2+\frac{2A-2}{r^2}\omega^1\wedge \omega^3=R_3^1,
\\&R_3^0=\frac{4-4A}{r^2}\omega^0\wedge \omega^3+\frac{4A-4}{r^2}\omega^1\wedge \omega^2=-R_2^1.
\end{align} 
Note that $A\to 0$ when $r\to\sqrt[4]{c}$, the curvature is bounded when $r\to\sqrt[4]{c}$ and this metric is Ricci-flat. We define 3 complex structures with respect to the frame $\{e_0,e_1,e_2,e_3\}$ by
\begin{align*}
I=\begin{pmatrix}
0 & 0 & 0 & -1\\
0 & 0 & -1 & 0 \\
0 & 1 & 0 & 0 \\
1 & 0 & 0 & 0
\end{pmatrix},
\quad
J=\begin{pmatrix}
0 & -1 & 0 & 0\\
1 & 0 & 0 & 0 \\
0 & 0 & 0 & -1 \\
0 & 0 & 1 & 0
\end{pmatrix},
\quad
K=\begin{pmatrix}
0 & 0 & -1 & 0\\
0 & 0 & 0 & 1 \\
1 & 0 & 0 & 0 \\
0 & -1 & 0 & 0
\end{pmatrix}.
\end{align*}
One can check that $I^2=J^2=K^2=-Id$ and $IJ=K$. The corresponding K\"{a}hler forms are 
\begin{align*}
&\omega_I=\omega^0\wedge \omega^3+\omega^1\wedge \omega^2 ,
\\&\omega_J=\omega^0\wedge \omega^1+\omega^2\wedge \omega^3,
\\&\omega_K=\omega^0\wedge \omega^2-\omega^1\wedge \omega^3.
\end{align*}
Throughout this computation, one can easily see that the Eguchi-Hanson metric is hyperk\"{a}hler. Moreover, the zero section $S^2$ is a complex Lagrangian submanifold in the Eguchi-Hanson space.

\begin{proposition}
The zero section $S^2$ of the Eguchi-Hanson space is a totally geodesic surface and is strongly stable.
\end{proposition}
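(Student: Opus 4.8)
The plan is to establish the two assertions separately, working along the zero section $\{r=\sqrt[4]{c}\}$, where $A=1-c/r^4$ vanishes. There the coframe pieces $\omega^1=r\sigma_1$ and $\omega^2=r\sigma_2$ survive while $\omega^3=A^{1/2}\sigma_3$ degenerates, so $\{e_1,e_2\}$ is a tangent frame for $S^2$ and $\{e_0,e_3\}$ is a normal frame. To show that $S^2$ is totally geodesic I would compute the second fundamental form $h_{\alpha ij}=\overline{g}(e_\alpha,\overline{\nabla}_{e_i}e_j)=\omega_j^\alpha(e_i)$ for $\alpha\in\{0,3\}$ and $i,j\in\{1,2\}$ directly from the listed connection $1$-forms. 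Every connection form carrying a lower tangent index and an upper normal index—namely $\omega_1^0,\ \omega_2^0,\ \omega_1^3=-\omega_3^1,\ \omega_2^3=-\omega_3^2$—is a constant multiple of $A^{1/2}$, whereas the only form carrying the singular coefficient $A^{-1/2}$, namely $\omega_3^0$, has a lower \emph{normal} index and so never enters $h_{\alpha ij}$. Letting $A\to 0$ along the zero section then gives $h_{\alpha ij}\equiv 0$, so $S^2$ is totally geodesic.

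The one genuine subtlety, and the point I expect to require the most care, is that the orthonormal coframe is itself singular at the bolt: $\omega^0=A^{-1/2}dr$ blows up and $e_3$ degenerates as $r\to\sqrt[4]{c}$, so strictly speaking the computation above is performed on the collar $r>\sqrt[4]{c}$ and the conclusion is reached by continuity. The cleanest way to remove this issue is to pass to the smooth coordinate $u$ with $\cosh u=r^2/\sqrt{c}$ from the Remark, in which the metric extends smoothly across $u=0$; or, more conceptually, to note that the zero section is exactly the fixed-point locus of the isometric $U(1)$-action generated by the Killing field dual to $\sigma_3$ (the collapsing circle), and the fixed-point set of an isometry is automatically totally geodesic. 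Either route confirms the vanishing of the second fundamental form without appealing to the singular frame.

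For strong stability I would invoke the Corollary: since $S^2$ is complex Lagrangian—hence special Lagrangian and minimal, indeed totally geodesic by the above—it suffices to show that $\Rc^L$ is a positive operator. Being totally geodesic, the Gauss equation gives $R^L_{1212}=R^N_{1212}$, and the listed curvature $2$-form $R_2^1=-R_3^0$ has $\omega^1\wedge\omega^2$-component $\tfrac{4(1-A)}{r^2}$; evaluating at $A=0$, $r=\sqrt[4]{c}$ yields Gaussian curvature $\tfrac{4}{\sqrt{c}}>0$. This is consistent with the Remark, which already identifies the induced metric as the round sphere of radius $\tfrac12 c^{1/4}$, whose curvature is $4/\sqrt{c}$. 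In either case $\Rc^L=\tfrac{4}{\sqrt{c}}\,g^L>0$, so the stability inequality (\ref{st}) holds and $S^2$ is strongly stable.
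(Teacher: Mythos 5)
Your proof is correct, and it splits naturally against the paper's. The totally geodesic half is essentially the paper's argument: both of you read off $h_{\alpha ij}=\omega_i^\alpha(e_j)$ from the listed connection $1$-forms, note that every tangent--normal form is a bounded multiple of $A^{1/2}$, and let $A\to 0$ at the bolt. Your extra care about the degeneration of the orthonormal frame at $r=\sqrt[4]{c}$ (passing to the smooth coordinate $u$, or identifying the zero section as the fixed-point set of the isometric $U(1)$-action) is a genuine improvement over the paper, which simply takes the limit; one small slip is that $\omega_2^1$ also carries an $A^{-1/2}$ coefficient, not only $\omega_3^0$, but it is tangent--tangent so it never enters $h_{\alpha ij}$, and in any case $\omega_2^1=\frac{2-A}{r}\sigma_3$ is regular. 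For strong stability the routes genuinely diverge: the paper verifies (\ref{st}) directly by computing the mixed components $R_{0110}=R_{0220}=R_{1331}=R_{2332}=-\frac{2}{\sqrt{c}}$ from the curvature $2$-forms, so that with $\mathrm{II}=0$ the relevant quadratic form is $\frac{4}{\sqrt{c}}\sum_\alpha (V^\alpha)^2$; you instead invoke the paper's earlier corollary reducing strong stability of a minimal (complex) Lagrangian surface to positivity of $\Rc^L$, and compute the Gaussian curvature $\frac{4}{\sqrt{c}}$ of the bolt from the $\omega^1\wedge\omega^2$-component of $R_2^1$, cross-checked against the Remark identifying the induced metric as the round sphere of radius $\frac{1}{2}c^{1/4}$. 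The two computations agree, as the corollary predicts they must. The paper's version is self-contained and makes the normal curvature explicit; yours exploits the complex Lagrangian structure, needs only the intrinsic geometry of the zero section (so the sign conventions for the curvature forms are immaterial), and automatically handles the off-diagonal terms $R_{i0i3}$ of the quadratic form, which the direct verification leaves implicit.
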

\begin{proof}
In the zero section, the normal vectors are $e_0$ and $e_3$. We compute the second fundamental form.
\begin{align*}
    h_{\alpha ij}=g_{EH}(e_{\alpha},\nabla_{e_i}e_j)=g_{EH}(e_{\alpha},\omega_i^k(e_j)e_k)=\omega_i^\alpha(e_j).
\end{align*}
So the only non-vanishing terms are
\begin{align*}
h_{011}=\frac{-A^{\frac{1}{2}}}{r}, \quad h_{022}=\frac{-A^{\frac{1}{2}}}{r}, \quad h_{312}=\frac{A^{\frac{1}{2}}}{r}.
\end{align*}
All vanish when $r\to\sqrt[4]{c}$. Moreover, we observe that
\begin{align*}
    R_{0110}=R_{0220}=R_{1331}=R_{2332}=\frac{-2}{\sqrt{c}}<0.
\end{align*}
Thus, (\ref{st}) suggests that the zero section is strongly stable.

\end{proof}

\begin{remark}
  In \cite{WT1}, authors also pointed out that any compact, minimal submanifold of Eguchi-Hanson space must be contained in the zero section.  
\end{remark}

\section{The stability of special Hyperkähler flow in strongly stable surface}

In this section, we are going to prove our main result. First, we introduce the tubular neighborhood.

\subsection{Tubular Neighborhood}

\begin{theorem}[Tubular neighborhood theorem]
Let $M$ be a Riemannian manifold and $\Sigma$ be a compact, oriented, embedded  submanifold.
There exists a diffeomorphism from an open neighborhood in normal bundle $N\Sigma$ onto an open neighborhood of $\Sigma$ in $M$.
\end{theorem}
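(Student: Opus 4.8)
The plan is to construct the diffeomorphism explicitly as the normal exponential map and then deduce the statement from the inverse function theorem together with the compactness of $\Sigma$. Define $E : N\Sigma \to M$ on the total space of the normal bundle by $E(p,v) = \exp_p(v)$, where $\exp_p$ is the Riemannian exponential map of $M$ at $p$ and $v \in N_p\Sigma \subset T_pM$. This is smooth because the exponential map depends smoothly on both its base point and its initial vector. The zero section $Z = \{(p,0) : p \in \Sigma\}$ is carried by $E$ to $\Sigma$ via the given embedding, so $E|_Z$ is a diffeomorphism onto $\Sigma$, and it remains to show that $E$ is a diffeomorphism from some open neighborhood of $Z$ onto an open neighborhood of $\Sigma$.

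First I would compute the differential of $E$ along the zero section. At a point $(p,0)$ there is a canonical splitting $T_{(p,0)}(N\Sigma) \cong T_p\Sigma \oplus N_p\Sigma$ into horizontal and vertical parts. On the horizontal factor $T_p\Sigma$, the differential $dE_{(p,0)}$ agrees with the inclusion $T_p\Sigma \hookrightarrow T_pM$, since $E$ restricts to the embedding on $Z$. On the vertical factor $N_p\Sigma$, the differential is the identity, because $t \mapsto \exp_p(tv)$ is the geodesic with initial velocity $v$, so its derivative at $t=0$ returns $v$. Hence $dE_{(p,0)} : T_p\Sigma \oplus N_p\Sigma \to T_pM$ is the natural identification, which is an isomorphism by the orthogonal decomposition $T_pM = T_p\Sigma \oplus N_p\Sigma$. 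By the inverse function theorem, $E$ is a local diffeomorphism from a neighborhood of each $(p,0)$ onto a neighborhood of $p$ in $M$.

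Finally I would upgrade this local statement to a uniform one, which is where the main obstacle lies: local diffeomorphisms need not assemble into a globally injective map, so injectivity on a single tubular neighborhood must be established separately. To handle this I would argue by contradiction. If no uniform radius worked, there would exist distinct pairs $(p_n, v_n) \neq (q_n, w_n)$ with $|v_n|, |w_n| \to 0$ and $E(p_n,v_n) = E(q_n,w_n)$. Using the compactness of $\Sigma$, pass to convergent subsequences $p_n \to p$ and $q_n \to q$; since $|v_n|,|w_n| \to 0$, the common image $E(p_n,v_n)$ converges to both $p$ and $q$, forcing $p = q$. But then for large $n$ both pairs lie in a single neighborhood on which $E$ is already a diffeomorphism, hence injective, contradicting $(p_n,v_n) \neq (q_n,w_n)$. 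This produces an $\epsilon > 0$ for which $E$ is injective on the $\epsilon$-disk bundle $\{(p,v) : |v| < \epsilon\}$; combined with the local diffeomorphism property, $E$ is then a diffeomorphism from this disk bundle onto the open tubular neighborhood $\{x \in M : d(x,\Sigma) < \epsilon\}$ of $\Sigma$, completing the proof.
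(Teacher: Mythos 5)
Your proof is correct. Note that the paper does not actually prove this statement: it is quoted as the classical tubular neighborhood theorem and used as a black box, so there is no argument in the paper to compare against. Your route --- the normal exponential map $E(p,v)=\exp_p(v)$, the computation that $dE_{(p,0)}$ is the canonical isomorphism $T_p\Sigma\oplus N_p\Sigma\cong T_pM$, the inverse function theorem, and the compactness/contradiction argument to promote local injectivity to injectivity on a uniform disk bundle --- is the standard textbook proof, and all the steps are sound. The only small over-claim is in the last sentence: the image of the $\epsilon$-disk bundle is an open neighborhood of $\Sigma$ (since an injective local diffeomorphism is a diffeomorphism onto its open image), but identifying it with the metric neighborhood $\{x: d(x,\Sigma)<\epsilon\}$ requires the extra observation that for small $\epsilon$ every such point is joined to $\Sigma$ by a minimizing normal geodesic; since the theorem only asks for \emph{some} open neighborhood, you can simply drop that identification.
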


In our case, $M$ is a hyperkähler 4-manifold and $\Sigma$ is a compact complex Lagrangian surface. Given any $p\in \Sigma$, let $U_{\epsilon}$ denote the tubular neighborhood of $p$. For any $q\in U_{\epsilon}\subseteq M$, there exists a unique $p\in\Sigma$ such that $p$ and $q$ are connected by the unique normal geodesic in $U_\epsilon$. By using the parallel transport of $T\Sigma$ along normal geodesic, one can define the horizontal distribution $\mathcal{H}$ and its orthogonal complement $\mathcal{V}$ in $TM$ which is called the vertical distribution. In the following, we denote the local coordinate system by 
$\{x^1,x^2,y^1,y^2\}$ and denote the local frame by $\{e_1,e_2,e_3,e_4\}$. More precisely,
\begin{align*}
    \mathcal{H}=\textrm{span}\{e_1,e_2\}, \quad \mathcal{V}=\textrm{span}\{e_{3},e_{4}\}.
\end{align*}
Then, we can use the parallel transport of the volume form $\Omega$ on $\Sigma$ along the normal geodesic to define a form on $U_\epsilon$. More precisely,
\begin{align}
    \Omega=\omega^1\wedge\omega^2,
\end{align}
where $\{\omega^1,\omega^2,\omega^3,\omega^4\}$ is the dual frame of $\{e_1,e_2,e_3,e_4\}$. Consider $L\subseteq T_qM$ with $\Omega(L)>0$, we can view it as a graph from the horizontal distribution $\mathcal{H}_q$ to the vertical distribution $\mathcal{V}_q$. By singular value decomposition, there exists $\{e_1,e_2\}$ orthonormal basis for $\mathcal{H}_q$ and $\{e_3,e_4\}$ for $\mathcal{V}_q$ such that 
\begin{align}
    \Tilde{e}_i=\cos\theta_i e_i+\sin\theta_i e_{i+2}, \quad  \Tilde{e}_\alpha=-\sin\theta_{\alpha} e_{\alpha-2}+\cos\theta_{\alpha} e_{\alpha},
\end{align}
where $\theta_i\in [0,\frac{\pi}{2})$, $\theta_\alpha=\theta_{\alpha-2}$.

\begin{remark}
Conversely, if we let $\{\Tilde{\omega}^1,\Tilde{\omega}^2,\Tilde{\omega}^3,\Tilde{\omega}^4\}$ be the dual frame of $\{\Tilde{e}_1,\Tilde{e}_2,\Tilde{e}_3,\Tilde{e}_4\}$ then 

\begin{align*}
 &e_i=\cos\theta_i \Tilde{e}_i-\sin\theta_i \Tilde{e}_{i+2}, \quad  e_{\alpha}=\sin\theta_{\alpha} \Tilde{e}_{\alpha-2}+\cos\theta_{\alpha} \Tilde{e}_{\alpha},
 \\&\omega_i=\cos\theta_i \Tilde{\omega}_i-\sin\theta_i \Tilde{\omega}_{i+2}, \quad  \omega_{\alpha}=\sin\theta_{\alpha} \Tilde{\omega}_{\alpha-2}+\cos\theta_{\alpha} \Tilde{\omega}_{\alpha}.
\end{align*}

\end{remark}

Next, we check that the relation between complex structures and this basis.
\begin{lemma}
Let $\Sigma$ be a complex Lagrangian submanifold with $\omega_I+i\omega_K=0$ then 

\begin{itemize}

\item $Ie_1,Ie_2,Ke_1,Ke_2\in \textrm{span}\{e_3,e_4\}$.
\item $Ie_3,Ie_4,Ke_3,Ke_4\in \textrm{span}\{e_1,e_2\}$.

\end{itemize}

\end{lemma}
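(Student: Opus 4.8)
The plan is to notice that both assertions are statements about the $2$-planes $\mathcal{H}_q=\mathrm{span}\{e_1,e_2\}$ and $\mathcal{V}_q=\mathrm{span}\{e_3,e_4\}$ rather than about the individual frame vectors: the first bullet says $I(\mathcal{H}_q)\subseteq\mathcal{V}_q$ and $K(\mathcal{H}_q)\subseteq\mathcal{V}_q$, while the second says $I(\mathcal{V}_q)\subseteq\mathcal{H}_q$ and $K(\mathcal{V}_q)\subseteq\mathcal{H}_q$. Since $I$ and $K$ are $\overline{g}$-orthogonal isometries with $I^2=K^2=-\mathrm{Id}$, once I know $I(\mathcal{H}_q)\subseteq\mathcal{V}_q$ I get $I(\mathcal{H}_q)=\mathcal{V}_q$ by a dimension count, and then $I(\mathcal{V}_q)=I(I(\mathcal{H}_q))=-\mathcal{H}_q=\mathcal{H}_q$; the same reasoning applies to $K$. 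So everything reduces to the single inclusion $I(\mathcal{H}_q),K(\mathcal{H}_q)\subseteq\mathcal{V}_q$ for every $q\in U_\epsilon$.

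First I would establish this at the base point $p\in\Sigma$, where $\mathcal{H}_p=T_p\Sigma$ and $\mathcal{V}_p=N_p\Sigma$. Since $\omega_I(X,Y)=\overline{g}(IX,Y)$, the complex Lagrangian hypothesis $\omega_I+i\omega_K\equiv0$ on $\Sigma$ gives $\omega_I|_\Sigma=\omega_K|_\Sigma=0$, that is, $\overline{g}(Ie_i,e_j)=\overline{g}(Ke_i,e_j)=0$ for all $i,j\in\{1,2\}$. Because $I$ is an isometry, $Ie_i$ is a unit vector orthogonal to all of $T_p\Sigma$, hence $Ie_i\in N_p\Sigma=\mathcal{V}_p$; likewise $Ke_i\in\mathcal{V}_p$. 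This settles the claim on $\Sigma$ itself.

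The substantive step is to propagate this from $\Sigma$ out to an arbitrary $q\in U_\epsilon$, and here I would use that $M$ is hyperkähler, so $\overline{\nabla}I=\overline{\nabla}J=\overline{\nabla}K=0$. Let $\gamma$ be the normal geodesic from $p$ to $q$ and let $P_\gamma$ denote parallel transport along it. By construction $\mathcal{H}_q=P_\gamma(T_p\Sigma)$, and since $P_\gamma$ is a linear isometry it carries orthogonal complements to orthogonal complements, so $\mathcal{V}_q=P_\gamma(N_p\Sigma)$ as well. Because $\overline{\nabla}I=0$, parallel transport commutes with $I$: if $V(t)$ is parallel along $\gamma$ then $\overline{\nabla}_{\dot\gamma}(IV)=(\overline{\nabla}_{\dot\gamma}I)V+I\overline{\nabla}_{\dot\gamma}V=0$, so $IV$ is parallel and $P_\gamma\circ I=I\circ P_\gamma$. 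Given any $v\in\mathcal{H}_q$, write $v=P_\gamma(v_0)$ with $v_0\in T_p\Sigma$; then $Iv=P_\gamma(Iv_0)$ with $Iv_0\in\mathcal{V}_p$ by the base case, so $Iv\in P_\gamma(\mathcal{V}_p)=\mathcal{V}_q$. The identical argument with $K$ gives $K(\mathcal{H}_q)\subseteq\mathcal{V}_q$, and combined with the reduction above this yields both bullets.

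The only real obstacle is this propagation argument: it is precisely the parallelism $\overline{\nabla}I=\overline{\nabla}K=0$ from the hyperkähler structure that lets the pointwise Lagrangian condition on $\Sigma$ spread to the horizontal/vertical splitting over all of $U_\epsilon$, and one must be slightly careful to record that the vertical distribution $\mathcal{V}$ is itself the parallel transport of $N\Sigma$ (not merely an abstractly defined orthogonal complement) so that $P_\gamma(\mathcal{V}_p)=\mathcal{V}_q$. The base case and the reverse inclusions are then purely formal.
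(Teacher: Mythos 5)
Your proof is correct and follows essentially the same route as the paper: the Lagrangian condition kills $\overline{g}(Ie_i,e_j)$ and $\overline{g}(Ke_i,e_j)$ at the base point on $\Sigma$, and the parallelism of $I$ and $K$ together with the parallel-transport definition of $\mathcal{H}$ and $\mathcal{V}$ propagates this to every $q\in U_\epsilon$. Your derivation of the second bullet via a dimension count and $I^2=K^2=-\mathrm{Id}$ is a cleaner packaging of the paper's coefficient computation (writing $Ke_1=a_1e_3+a_2e_4$, etc.), but it rests on the same underlying fact.
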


\begin{proof}
Since $\Sigma$ is a complex Lagrangian submanifold, at any $p\in\Sigma$
\begin{align*}
    0=\omega_K(e_1,e_2)|_p=\overline{g}(Ke_1,e_2)|_p.
\end{align*}
For any $q\in U_\epsilon$, there exists a $p\in\Sigma$ such that $p$ and $q$ are connected by normal geodesic. Since we define our frame by parallel transport,
\begin{align*}
    &\overline{g}(Ke_1,e_2)|_p=\overline{g}(Ke_1,e_2)|_q=-\overline{g}(Ke_2,e_1)|_q=0
    \\&\overline{g}(Ke_1,e_1)|_q=-\overline{g}(Ke_1,e_1)|_q=0
\end{align*}

On the other hand, we check that $Ke_3\in\textrm{span}\{e_1,e_2\}$. Write $Ke_1=a_1 e_3+a_2e_4$ and $Ke_2=b_1 e_3+b_2e_4$, we compute
\begin{align*}
    0&=\overline{g}(e_1,e_3)
    \\&=-\overline{g}(a_1Ke_3+a_2Ke_4,e_3)
    \\&=-a_2 \overline{g}(Ke_4,e_3).
\end{align*}
Similarly, we get 
\begin{align*}
    0&=a_1\overline{g}(Ke_3,e_4)=b_1\overline{g}(Ke_3,e_4)
    \\&=a_2\overline{g}(Ke_4,e_3)=-a_2\overline{g}(Ke_3,e_4)
    \\&=b_2\overline{g}(Ke_4,e_3)=-b_2\overline{g}(Ke_3,e_4).
\end{align*}
Since $a_1,a_2,b_1,b_2$ not all vanish, it implies that $\overline{g}(Ke_3,e_4)=0$. Others are similar.

\end{proof}

At the end of this subsection, we recall some estimates in \cite{WT2} which we will use later. 
 Let $U_\epsilon$ be the tubular neighborhood of $p\in \Sigma$ with the coordinate system $(x^1,x^2,y^{3},y^4)$ and the local frame $\{e_1,e_2,e_3,e_4\}$ with the dual frame $\{\omega^1,\omega^2,\omega^3,\omega^4\}$. Then we have

\begin{lemma}[\cite{WT2}, Lemma 2.5] \label{L4.4}

\begin{align*}
    &\nonumber \overline{g}(\frac{\partial}{\partial x^i},e_j)|_{(x,y)}=\delta_{ij}-y^\alpha h_{\alpha ij}|_p+O(|x|^2+|y|^2),
    \\&\nonumber\overline{g}(\frac{\partial}{\partial y^\alpha},e_\beta)|_{(x,y)}=\delta_{\alpha\beta}+O(|x|^2+|y|^2),
    \\&\overline{g}(\frac{\partial}{\partial x^i},e_\beta)|_{(x,y)}=O(|x|^2+|y|^2) \quad \overline{g}( \frac{\partial}{\partial y^\alpha},e_j)|_{(x,y)}=O(|x|^2+|y|^2).
\end{align*}
In summary, we can write
\begin{align*}
    &\nonumber e_i=\frac{\partial}{\partial x^i}+y^\alpha h_{\alpha ij}|_p\frac{\partial}{\partial x^j}+O(|x|^2+|y|^2),  \\&e_\alpha=\frac{\partial}{\partial y^\alpha}+O(|x|^2+|y|^2).
\end{align*}

\end{lemma}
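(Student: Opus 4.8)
The plan is to realize the coordinate system as Fermi coordinates for $\Sigma$ and to read off all four inner products from a first-order Jacobi field expansion along the normal geodesics used to build the tubular neighborhood. Concretely, let $\sigma(x)$ be the point of $\Sigma$ with normal coordinates $x=(x^1,x^2)$, let $\{e_\alpha(x)\}$ be an orthonormal frame of the normal bundle $N\Sigma$ along $\Sigma$ chosen so that $\nabla^\perp e_\alpha|_p=0$, and set
\begin{align*}
    F(x,y)=\exp_{\sigma(x)}\big(y^\alpha e_\alpha(x)\big).
\end{align*}
By construction the frame $\{e_A\}$ at $q=F(x,y)$ is the parallel transport of $\{e_A(\sigma(x))\}$ along the radial normal geodesic $\gamma(s)=F(x,sy)$, $s\in[0,1]$, while $\frac{\partial}{\partial x^i}=dF(\partial_{x^i})$ and $\frac{\partial}{\partial y^\alpha}=dF(\partial_{y^\alpha})$. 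At the base point ($x=y=0$) one has $\frac{\partial}{\partial x^i}=e_i$ and $\frac{\partial}{\partial y^\alpha}=e_\alpha$, so every stated identity holds to zeroth order; the content is the first-order term in $y$, which I extract from the Jacobi equation.

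For the tangential fields, fix $x$ and observe that $V_i(s):=dF(\partial_{x^i})|_{(x,sy)}$ is the variation field of the family of geodesics $s\mapsto F(x+t\,\varepsilon_i,sy)$, hence a Jacobi field along $\gamma$. Its initial data are $V_i(0)=d\sigma(\partial_{x^i})$ and, by symmetry of the mixed second derivatives of $F$, $\overline{\nabla}_sV_i|_{s=0}=\overline{\nabla}_{\partial_{x^i}}(y^\alpha e_\alpha)=y^\alpha\overline{\nabla}_{e_i}e_\alpha$. The Weingarten equation $\overline{\nabla}_{e_i}e_\alpha=-h_{\alpha ij}e_j+\nabla^\perp_{e_i}e_\alpha$ together with the choice $\nabla^\perp e_\alpha|_p=0$ gives $\overline{\nabla}_sV_i|_{s=0}=-y^\alpha h_{\alpha ij}|_p\,e_j$ at $p$. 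Expressing the Jacobi field in the parallel frame $\{e_A\}$, its components solve a linear second-order ODE $\ddot c_A=-\overline{R}(\cdots)c$, so $c_A(1)=c_A(0)+\dot c_A(0)+O(|y|^2)$, the error coming from the curvature term since $|\dot\gamma|=O(|y|)$. Evaluating at $s=1$ yields
\begin{align*}
    \frac{\partial}{\partial x^i}=e_i-y^\alpha h_{\alpha ij}|_p\,e_j+O(|x|^2+|y|^2),
\end{align*}
which (after inverting the tangential block to first order) is the summary formula $e_i=\frac{\partial}{\partial x^i}+y^\alpha h_{\alpha ij}|_p\,\frac{\partial}{\partial x^j}+O(|x|^2+|y|^2)$ and gives both estimates for $\frac{\partial}{\partial x^i}$; note that the right-hand side is purely horizontal to first order, so its $e_\beta$-component is genuinely $O(|x|^2+|y|^2)$. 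For the normal fields, $W_\alpha(s):=dF(\partial_{y^\alpha})|_{(x,sy)}$ is the Jacobi field along $\gamma$ with $W_\alpha(0)=0$ and $\overline{\nabla}_sW_\alpha|_{s=0}=e_\alpha$; a Jacobi field vanishing at the origin expands as $W_\alpha(s)=s\,e_\alpha+O(s^3)$, so $W_\alpha(1)=e_\alpha+O(|y|^2)$, which is the Gauss-lemma statement and gives the remaining two estimates.

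The routine but essential part is the error bookkeeping. All the $O(|x|)$ discrepancies—between $h_{\alpha ij}$ at $\sigma(x)$ and at $p$, and between $\nabla^\perp e_\alpha$ at $\sigma(x)$ and its vanishing value at $p$—enter only multiplied by a factor $y^\alpha$, so they contribute $O(|x||y|)\subseteq O(|x|^2+|y|^2)$; the curvature corrections in both Jacobi expansions are $O(|y|^2)$; and the second-order corrections in the normal-coordinate expansion of $\sigma$ are $O(|x|^2)$. I expect this uniform absorption of the mixed $|x||y|$ terms, rather than any single identity, to be the point requiring the most care, since it is exactly what allows the second fundamental form to be frozen at $p$ in the final formulas.
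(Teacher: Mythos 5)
The paper itself gives no proof of this lemma---it is quoted verbatim from \cite{WT2} (Lemma 2.5)---and your Fermi-coordinate/Jacobi-field argument is exactly the standard derivation behind that result; the tangential computation (initial data $V_i(0)=e_i+O(|x|^2)$, $\overline{\nabla}_sV_i|_{s=0}=y^\alpha\overline{\nabla}_{e_i}e_\alpha=-y^\alpha h_{\alpha ij}e_j+O(|x||y|)$, curvature remainder $O(|y|^2)$) and the absorption of all mixed $|x||y|$ errors are correct, as is the first-order inversion giving the summary formulas. One small repair in the normal case: $W_\alpha(s)=dF(\partial_{y^\alpha})|_{(x,sy)}$ satisfies $W_\alpha(0)=e_\alpha\neq 0$, so it is not itself the Jacobi field vanishing at the origin; the Jacobi field is $J(s)=s\,W_\alpha(s)$, with $J(0)=0$, $\overline{\nabla}_sJ|_{s=0}=e_\alpha$ and expansion $J(s)=s\,e_\alpha+O(s^3|y|^2)$, which still yields your stated conclusion $W_\alpha(1)=J(1)=e_\alpha+O(|y|^2)$. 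With that wording fixed the proof is complete.
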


\begin{lemma}[\cite{WT2}, Proposition 2.6]\label{L4.5}
For the connection 1-form $\omega_A^B=\langle \overline{\nabla}_{e_C}e_A,e_B \rangle \omega^C$, we have the following expansion. 
\begin{align*}
    &\nonumber\omega_i^j(e_k)|_{(x,y)}=\frac{1}{2}x^lR_{jikl}|_p+y^\alpha \overline{R}_{jik\alpha }|_p+O(|x|^2+|y|^2),
    \\&\nonumber\omega_i^j(e_\beta)|_{(x,y)}=\frac{1}{2}y^\alpha \overline{R}_{ji\beta\alpha}|_p+O(|x|^2+|y|^2),
    \\&\nonumber\omega_i^\alpha(e_j)|_{(x,y)}=h_{\alpha ij}|_p+x^kh_{\alpha ij,k}|_p+y^\beta(\overline{R}_{\alpha ij \beta}+h_{\alpha il}h_{\beta jl})|_p+O(|x|^2+|y|^2),
    \\&\nonumber\omega_i^\alpha(e_\beta)|_{(x,y)}=\frac{1}{2}y^\gamma \overline{R}_{\alpha i\beta\gamma}|_p+O(|x|^2+|y|^2),
    \\&\nonumber\omega_\beta^\alpha(e_i)|_{(x,y)}=\frac{1}{2}x^j R^\perp_{\alpha\beta ij}|_p+y^\gamma \overline{R}_{\alpha\beta i \gamma}+O(|x|^2+|y|^2),
    \\&\omega_\beta^\alpha(e_\gamma)|_{(x,y)}=\frac{1}{2}y^\delta \overline{R}_{\alpha\beta\gamma\delta}|_p+O(|x|^2+|y|^2),
\end{align*}
where $\overline{R}$ deonte the curvature on $M$, $R$ is the curvature on $\Sigma$ and $R^\perp$ is the normal curvature. 
\end{lemma}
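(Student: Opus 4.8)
The plan is to carry out the computation in the Fermi coordinate system attached to $\Sigma$, exploiting the fact that the frame $\{e_A\}$ is \emph{radially parallel}: along each normal geodesic emanating from $\sigma(x)\in\Sigma$ in the direction $y^\alpha\nu_\alpha$ the frame is defined by parallel transport, so $\overline{\nabla}_R e_A=0$, where $R$ denotes the radial (geodesic velocity) field. Since $R$ is exactly $y^\alpha e_\alpha$ and parallel transport preserves the metric, this produces the exact gauge condition
\begin{align*}
    y^\alpha\,\omega_A^B(e_\alpha)=0 \qquad\text{for all } A,B,
\end{align*}
holding identically in $(x,y)$. Because $|x^i y^\alpha|\le\tfrac12(|x|^2+|y|^2)$, every mixed term is absorbed into the error $O(|x|^2+|y|^2)$, so it suffices to determine the pure first-order part in $y$ (computed at $x=0$, off $\Sigma$) and in $x$ (computed at $y=0$, along $\Sigma$) separately, and then add them.

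For the normal expansion I would differentiate along $R$ using Cartan's second structure equation $d\omega_A^B=-\omega_C^B\wedge\omega_A^C+\Omega_A^B$ with curvature form $\Omega_A^B=\tfrac12\overline{R}_{BACD}\,\omega^C\wedge\omega^D$. Contracting with $R$ and using the gauge condition to annihilate the quadratic piece $\iota_R(\omega_C^B\wedge\omega_A^C)=0$, Cartan's magic formula collapses to $\mathcal{L}_R\omega_A^B=\iota_R\Omega_A^B$. Evaluating this identity on $\partial_{y^\beta}$, and using $[R,\partial_{y^\beta}]=-\partial_{y^\beta}$, yields the radial ODE $r\frac{d}{dr}f+f=\Omega_A^B(R,\partial_{y^\beta})$, whose solution is $f\approx\tfrac12 y^\gamma\overline{R}_{\cdots}$; this is precisely why the four formulas evaluated on a normal direction $e_\beta$ (or the $y$-term of $\omega_\beta^\alpha(e_\gamma)$) carry the factor $\tfrac12$. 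Evaluating instead on $\partial_{x^i}$, where $[R,\partial_{x^i}]=0$, gives $r\frac{d}{dr}f=\Omega_A^B(R,\partial_{x^i})$, producing the $y^\alpha\overline{R}$ coefficients \emph{without} the factor $\tfrac12$ in $\omega_i^j(e_k)$, $\omega_i^\alpha(e_j)$, and $\omega_\beta^\alpha(e_i)$ — matching the lemma exactly. The one coefficient not explained by curvature alone is the quadratic term $h_{\alpha il}h_{\beta jl}$ in $\omega_i^\alpha(e_j)$: it appears when I convert $\omega_i^\alpha(\partial_{x^j})$ into $\omega_i^\alpha(e_j)$ via the frame change $e_j=\partial_{x^j}+y^\alpha h_{\alpha jl}\partial_{x^l}+O(|x|^2+|y|^2)$ supplied by \Cref{L4.4}, multiplying the leading value $h_{\alpha il}$ by $y^\beta h_{\beta jl}$.

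For the tangential expansion I restrict to $y=0$, where the frame reduces to the radial-gauge orthonormal frame of $\Sigma$ in geodesic normal coordinates $x$. Running the same structure-equation/radial-ODE argument inside $\Sigma$ for its Levi-Civita and normal connections — both of which vanish at $p$ in this gauge — gives $\omega_i^j(e_k)|_{y=0}=\tfrac12 x^l R_{jikl}+O(|x|^2)$ and $\omega_\beta^\alpha(e_i)|_{y=0}=\tfrac12 x^j R^\perp_{\alpha\beta ij}+O(|x|^2)$. The mixed form $\omega_i^\alpha(e_j)|_{y=0}$ is by definition the second fundamental form $h_{\alpha ij}$ at $\sigma(x)$; its value at $p$ is $h_{\alpha ij}|_p$, and since all connection terms vanish at $p$ in radial gauge the ordinary $x$-derivative agrees with the covariant derivative, giving $x^k h_{\alpha ij,k}|_p$. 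Assembling the separate $x$- and $y$-parts and absorbing all cross terms into $O(|x|^2+|y|^2)$ produces the six stated formulas.

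The main obstacle is bookkeeping and convention-consistency rather than any single hard estimate: one must track the curvature-form index and sign conventions through the contraction $\iota_R\Omega_A^B$, reconcile the factor $\tfrac12$ produced by the radial ODE in the normal directions with the factor $\tfrac12$ coming from normal coordinates on $\Sigma$, and — most delicately — isolate the Riccati-type term $h_{\alpha il}h_{\beta jl}$, which is easy to overlook because it originates not from the ambient curvature but from the first-order correction in the coordinate-to-frame change of \Cref{L4.4}. Checking that this quadratic contribution pairs with $\overline{R}_{\alpha ij\beta}$ and not some other curvature component is the step I would verify with the most care.
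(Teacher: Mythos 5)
Your argument is essentially correct, but note that the paper itself offers no proof of this lemma: it is imported verbatim from \cite{WT2}, Proposition 2.6, so there is no internal proof to compare against. Your radial-gauge derivation is a sound, self-contained reconstruction and is equivalent in content to the Fermi-coordinate expansion carried out in \cite{WT2}; in particular you correctly locate the two subtle points, namely that the normal directions pick up the factor $\tfrac12$ from solving $t\dot f+f=ct$ with $f(0)=0$ (the initial condition $f(0)=0$ holding because $\overline{\nabla}_{e_\beta}e_A$ vanishes on $\Sigma$ by the parallel-transport construction), while the tangential directions solve $t\dot f=ct$ and get no $\tfrac12$, and that the Riccati-type term $h_{\alpha il}h_{\beta jl}$ enters only through the frame change $e_j=\partial_{x^j}+y^\beta h_{\beta jl}\partial_{x^l}+O(|x|^2+|y|^2)$ of \Cref{L4.4} paired with the zeroth-order value $\omega_i^\alpha(\partial_{x^l})|_p=h_{\alpha il}|_p$ (all other $\omega_A^B$ vanish at $p$, so no analogous correction appears elsewhere). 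The one hypothesis you use but do not state is that the frame $\{e_1,e_2\}$ (resp.\ $\{e_3,e_4\}$) restricted to $\Sigma$ is itself obtained by parallel transport along radial geodesics of $\Sigma$ from $p$ with respect to the induced (resp.\ normal) connection; this is what makes the tangential gauge condition $x^k\omega_i^j(e_k)|_{(x,0)}=0=x^k\omega_\beta^\alpha(e_k)|_{(x,0)}$ valid and is indispensable for the terms $\tfrac12 x^lR_{jikl}$ and $\tfrac12 x^jR^\perp_{\alpha\beta ij}$. This choice is implicit in the paper's tubular-neighborhood setup and explicit in \cite{WT2}, so you should record it; beyond that, only the sign and index conventions in $\iota_R\Omega_A^B$ remain to be pinned down, as you yourself flag.
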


\subsection{Previous Estimate}
Let $\Gamma\subset U_\epsilon$ be an oriented surface. For any $q\in\Gamma$, we construct a basis $\{\Tilde{e}_1,\Tilde{e}_2\}$ for its tangent space $T_q\Gamma$ by using the method in the last subsection. In the following, we define some tensors that will be used in the proof of the main theorem.

\begin{itemize}

\item $s=\max\{\sin\theta_1,\sin\theta_2\}$.

\item Denote the second fundamental forms are given by
\begin{align*}
  &\mathrm{II}^{\Gamma}=\Tilde{h}_{\alpha ij} \Tilde{e}_\alpha\otimes \Tilde{\omega}^i\otimes \Tilde{\omega}^j,
  \\&\mathrm{II}^{\Sigma}=h_{\alpha ij} e_\alpha\otimes \omega^i\otimes \omega^j=h_{\alpha ij}(\sin\theta_\alpha \Tilde{e}_{\alpha-2}+\cos\theta_\alpha \Tilde{e}_{\alpha})\otimes (\cos\theta_i\Tilde{\omega}^i-\sin\theta_i \Tilde{\omega}^{i+2})\otimes (\cos\theta_j\Tilde{\omega}^j-\sin\theta_j \Tilde{\omega}^{j+2}),
\end{align*}
then
\begin{align*}
    \langle \mathrm{II}^{\Gamma},\mathrm{II}^{\Sigma}\rangle|_q=\sum_{\alpha,i,j}\cos\theta_i\cos\theta_j\cos\theta_\alpha  \Tilde{h}_{\alpha ij}h_{\alpha ij}.
\end{align*}

\item Define the tensor 
\begin{align*}
    S^\Sigma|_q=y^\beta(\overline{R}_{\alpha ij\beta}+h_{\alpha il}h_{\beta jl})|_p \omega^i\otimes \omega^j \otimes e_\alpha,
\end{align*}
where $p\in\Sigma$ is the unique point such that there exists a unique normal geodesic connecting $p$ and $q$. Then,
\begin{align*}
    \langle \mathrm{II}^{\Gamma},S^{\Sigma}\rangle|_q=\sum_{\alpha,\beta,i,j}\cos\theta_i\cos\theta_j\cos\theta_\alpha  \Tilde{h}_{\alpha ij}y^\beta(\overline{R}_{\alpha ij\beta}+h_{\alpha il}h_{\beta jl})|_p.
\end{align*}

\end{itemize}

Before proving the main result, we need to use above two tensors to obtain some estimates of $\Omega$ in the tubular neighborhood $U_\epsilon$. The general results for the following estimate can be found in \cite{WT2}. In the following, we suppose $\Omega(T_q\Gamma)>\frac{1}{2}$.

\begin{lemma}

\begin{align*}
    &\Big|\langle \mathrm{II}^{\Gamma},\mathrm{II}^{\Sigma}\rangle|_q-\sum_{\alpha,i,j} \Tilde{h}_{\alpha ij}h_{\alpha ij}\Big|\leq cs^2|\mathrm{II}^{\Gamma}|,
    \\& \Big|\langle \mathrm{II}^{\Gamma},S^{\Sigma}\rangle|_q-\sum_{\alpha,\beta,i,j}\Tilde{h}_{\alpha ij}y^\beta(\overline{R}_{\alpha ij\beta}+h_{\alpha il}h_{\beta jl})\big|_p\Big|\leq cs^2 \sqrt{\psi}|\mathrm{II}^{\Gamma}|,
\end{align*}
where $\psi=\sum_\alpha (y^\alpha)^2 $ is the square of the distance to $\Sigma$.
\end{lemma}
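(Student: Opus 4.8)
The plan is to prove both inequalities by the same mechanism: each left-hand side is a contraction of $\mathrm{II}^\Gamma$ against a fixed tensor (either $\mathrm{II}^\Sigma$ or $S^\Sigma$), and the two contractions differ only in that the ``graphical'' contraction carries extra factors $\cos\theta_i\cos\theta_j\cos\theta_\alpha$ coming from the singular-value decomposition, whereas the naive contraction sets all these cosines to $1$. Thus in each case the difference is a sum of terms of the form $\Tilde h_{\alpha ij}\,(\text{curvature/II data})\,(\cos\theta_i\cos\theta_j\cos\theta_\alpha-1)$, and the whole estimate reduces to controlling the scalar factor $\cos\theta_i\cos\theta_j\cos\theta_\alpha-1$ in terms of $s^2$.

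The key elementary step is the estimate $|\cos\theta_i\cos\theta_j\cos\theta_\alpha-1|\le c\,s^2$. Since each $\theta\in[0,\tfrac\pi2)$ one has $1-\cos\theta=\tfrac12\theta^2+O(\theta^4)\le c\sin^2\theta\le c\,s^2$, and a telescoping argument (writing $1-xyz=(1-x)+x(1-y)+xy(1-z)$ with $x,y,z\in(0,1]$) upgrades the single-cosine bound to the product of three cosines. The hypothesis $\Omega(T_q\Gamma)>\tfrac12$ is exactly what keeps each $\theta_i$ bounded away from $\tfrac\pi2$, hence each $\cos\theta_i$ bounded below by a positive constant, so the constant $c$ is uniform; this is where that normalization is used.

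For the first inequality I would write
\begin{align*}
    \langle \mathrm{II}^{\Gamma},\mathrm{II}^{\Sigma}\rangle|_q-\sum_{\alpha,i,j}\Tilde h_{\alpha ij}h_{\alpha ij}
    =\sum_{\alpha,i,j}(\cos\theta_i\cos\theta_j\cos\theta_\alpha-1)\,\Tilde h_{\alpha ij}h_{\alpha ij},
\end{align*}
then apply the scalar bound and Cauchy--Schwarz, pulling $|\mathrm{II}^\Gamma|=(\sum\Tilde h_{\alpha ij}^2)^{1/2}$ out and absorbing $\sum_{\alpha,i,j}h_{\alpha ij}^2=|\mathrm{II}^\Sigma|^2$ (bounded on the compact $\Sigma$) into $c$. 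The second inequality is identical in structure, with $h_{\alpha ij}$ replaced by the fixed coefficient $y^\beta(\overline R_{\alpha ij\beta}+h_{\alpha il}h_{\beta jl})|_p$; here the extra factor $\sqrt\psi$ appears because that coefficient is linear in $y^\beta$, and $|y^\beta|\le(\sum_\alpha(y^\alpha)^2)^{1/2}=\sqrt\psi$, while the curvature and second-fundamental-form data at $p$ are again bounded on the compact $\Sigma$ and swept into $c$.

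The main obstacle is bookkeeping rather than depth: one must be careful that the mismatch between the two contractions really is \emph{only} the cosine factors, i.e.\ that $\langle\mathrm{II}^\Gamma,\cdot\rangle$ is computed in the tilded frame while the ``naive'' sum uses the untilded frame, and that converting between them via the singular-value relations produces exactly $\cos\theta_i\cos\theta_j\cos\theta_\alpha$ and no stray $\sin\theta$ cross terms at leading order. Any surviving $\sin\theta$ terms must themselves be shown to be $O(s^2)|\mathrm{II}^\Gamma|$, which again follows from $|\sin\theta|\le s$ together with a second factor of $\sin\theta$ or a compensating bounded quantity; verifying that no linear-in-$s$ term survives is the only place requiring genuine attention.
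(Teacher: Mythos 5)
Your proposal is correct and follows essentially the same route as the paper: the paper's proof likewise reduces everything to the scalar estimate $|1-\cos^2\theta_i\cos\theta_j|\le \tfrac32 s^2$ (via $1-\cos\theta_j\le\sin^2\theta_j\le s^2$) and then absorbs the bounded quantities $|\mathrm{II}^\Sigma|$ and $|y^\beta|\le\sqrt{\psi}$ into the constant. Your telescoping of the triple cosine product and the Cauchy--Schwarz step are just the details the paper leaves implicit.
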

\begin{proof}
It suffices to estimate 
\begin{align*}
    |1-\cos^2\theta_i\cos\theta_j|&=|1-\cos\theta_j+\sin^2\theta_i\cos\theta_j|
    \\&\leq |1-\sqrt{1-\sin^2\theta_j}|+s^2 \leq \frac{3}{2}s^2. 
\end{align*}

\end{proof}

\begin{lemma}
\begin{align*}
    \Big|\sum_{\alpha=3,4}\sum_{k=1,2}\Tilde{\Omega}_{\alpha 2,k}\Tilde{h}_{\alpha k1}+\Tilde{\Omega}_{1\alpha, k}\Tilde{h}_{\alpha k2}-\langle\mathrm{II}^{\Gamma},\mathrm{II}^{\Sigma}+S^\Sigma \rangle\big|_q(*\Omega)\Big|\leq c(s^2+\psi)|\mathrm{II}^{\Gamma}|.
\end{align*}
Here, $*\Omega=\Omega(\tilde{e}_1,\tilde{e}_2)$, $\Tilde{\Omega}_{AB}=\Omega(\Tilde{e}_A,\Tilde{e}_B)$, $\tilde{\Omega}_{\alpha 2,k}=(\nabla_{\tilde{e}_k}\Omega)(\tilde{e}_\alpha,\tilde{e}_2)$.
\end{lemma}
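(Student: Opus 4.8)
The plan is to expand the covariant derivative $\tilde\Omega_{\alpha2,k}=(\overline\nabla_{\tilde e_k}\Omega)(\tilde e_\alpha,\tilde e_2)$ directly in terms of the ambient connection $1$-forms, identify its leading term as a second-fundamental-form/curvature quantity of $\Sigma$ via Lemma \ref{L4.5}, and then sort every remaining piece into the two allowed error types, $O(s^2)$ and $O(\psi)$. Since $\Omega=\omega^1\wedge\omega^2$, the tensorial derivative involves only the connection forms of $\overline g$ and no derivative of the angles $\theta_i$; writing $\overline\nabla_{\tilde e_k}\Omega=-\sum_{\beta=3,4}\omega^1_\beta(\tilde e_k)\,\omega^\beta\wedge\omega^2-\sum_{\beta=3,4}\omega^2_\beta(\tilde e_k)\,\omega^1\wedge\omega^\beta$ (the purely tangential pieces drop since $\omega^1\wedge\omega^1=\omega^2\wedge\omega^2=0$), I would substitute the singular-value relations $\tilde e_\alpha=-\sin\theta_\alpha e_{\alpha-2}+\cos\theta_\alpha e_\alpha$ and $\tilde e_2=\cos\theta_2 e_2+\sin\theta_2 e_4$ into the evaluation on $(\tilde e_\alpha,\tilde e_2)$. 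A short bookkeeping of the wedge evaluations then yields $\tilde\Omega_{\alpha2,k}=\cos\theta_\alpha\cos\theta_2\cos\theta_k\,\omega^\alpha_1(e_k)+(\text{errors})$, and analogously $\tilde\Omega_{1\alpha,k}=\cos\theta_1\cos\theta_\alpha\cos\theta_k\,\omega^\alpha_2(e_k)+(\text{errors})$.

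For the leading term I would invoke Lemma \ref{L4.5}, expanded from the foot point $p=p(q)$ so that the tangential displacement of $q$ vanishes and only the normal coordinate $y$ survives: $\omega^\alpha_i(e_j)|_q=h_{\alpha ij}|_p+y^\beta(\overline R_{\alpha ij\beta}+h_{\alpha il}h_{\beta jl})|_p+O(|y|^2)=(\mathrm{II}^\Sigma+S^\Sigma)_{\alpha ij}|_q+O(\psi)$, which is exactly the definition of $\mathrm{II}^\Sigma+S^\Sigma$. Substituting, using the symmetry $\tilde h_{\alpha k1}=\tilde h_{\alpha 1k}$, and summing, the leading contribution of $\sum_{\alpha,k}\tilde\Omega_{\alpha2,k}\tilde h_{\alpha k1}+\tilde\Omega_{1\alpha,k}\tilde h_{\alpha k2}$ becomes $\sum_{\alpha,k}\cos\theta_\alpha\cos\theta_k[\cos\theta_2(\mathrm{II}^\Sigma+S^\Sigma)_{\alpha1k}\tilde h_{\alpha1k}+\cos\theta_1(\mathrm{II}^\Sigma+S^\Sigma)_{\alpha2k}\tilde h_{\alpha2k}]$. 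On the other hand, by the definitions of the two inner products in the preceding lemma together with $*\Omega=\Omega(\tilde e_1,\tilde e_2)=\cos\theta_1\cos\theta_2$, the target $\langle\mathrm{II}^\Gamma,\mathrm{II}^\Sigma+S^\Sigma\rangle(*\Omega)$ is the same sum but with each monomial carrying the full weight $\cos\theta_1\cos\theta_2\cos\theta_i\cos\theta_j\cos\theta_\alpha$. Comparing term by term, every coefficient discrepancy has the form $\cos(\cdots)\,(1-\cos^2\theta_i)=\cos(\cdots)\sin^2\theta_i$, hence is $O(s^2)$; since $|\mathrm{II}^\Sigma+S^\Sigma|$ is bounded on the fixed compact $\Sigma$ (with $|S^\Sigma|\le C\sqrt\psi$), this part of the difference is $\le c\,s^2|\mathrm{II}^\Gamma|$.

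It then remains to control the error terms generated in the first step, and this is where the argument is most delicate. The potentially fatal contributions are those \emph{linear} in $\sin\theta$, which a priori would be only $O(s)|\mathrm{II}^\Gamma|$ and would destroy the estimate. The key point I would stress is that they do not occur: the sole linear-in-$\sin\theta$ piece comes from $\omega^\alpha_1(\tilde e_k)=\cos\theta_k\,\omega^\alpha_1(e_k)+\sin\theta_k\,\omega^\alpha_1(e_{k+2})$, and the mixed normal-direction coefficient $\omega^\alpha_1(e_{k+2})$ vanishes on $\Sigma$ because the frame is constructed by parallel transport along the normal geodesics; by Lemma \ref{L4.5} it is $O(\sqrt\psi)$, so the term is actually $O(s\sqrt\psi)$, not $O(s)$. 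Every other correction is either a product $\sin\theta_\alpha\sin\theta_2=O(s^2)$ from the wedge evaluation or the $O(\psi)$ remainder of the connection-form expansion. Absorbing the cross terms via $s\sqrt\psi\le\tfrac12(s^2+\psi)$ and multiplying through by the remaining factor $\tilde h$, bounded by $|\mathrm{II}^\Gamma|$, all errors combine into $c(s^2+\psi)|\mathrm{II}^\Gamma|$, which is the claimed bound. The main obstacle is precisely this verification that no first-order-in-$s$ term survives, which rests entirely on the parallel-transport construction of the frame forcing the mixed normal connection coefficients to vanish along $\Sigma$.
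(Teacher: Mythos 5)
Your proposal is correct and follows essentially the same route as the paper: expand $\overline\nabla_{\tilde e_k}\Omega$ via the connection $1$-forms, evaluate on the rotated frame, invoke Lemma \ref{L4.5} to identify the leading piece with $\mathrm{II}^\Sigma+S^\Sigma$, and absorb the cross term $\sin\theta_k\cdot O(\sqrt\psi)$ via $s\sqrt\psi\le\tfrac12(s^2+\psi)$ together with the $\sin^2$-type coefficient discrepancies. Your explicit observation that no genuinely first-order-in-$s$ term survives (because $\omega_i^\alpha(e_\beta)=O(\sqrt\psi)$ by the parallel-transport construction) is exactly the point the paper's computation relies on, stated more transparently.
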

\begin{proof}
We compute that
\begin{align*}
   \overline{\nabla}_{\Tilde{e}_k}\Omega=(\overline{\nabla}_{\tilde{e}_k}\omega_1)\wedge\omega_2+\omega_1\wedge(\overline{\nabla}_{\tilde{e}_k}\omega_2)=-\omega_\beta^1(\Tilde{e}_k)\omega^\beta\wedge\omega^2-\omega_\beta^2(\Tilde{e}_k)\omega^1\wedge\omega^\beta.
\end{align*}
Here,
\begin{align*}
    &-\sum_{\alpha,\beta}\omega_\beta^1(\Tilde{e}_k)(\omega^\beta\wedge\omega^2)(\Tilde{e}_\alpha,\Tilde{e}_2)\Tilde{h}_{\alpha k1}=-\omega_3^1(\Tilde{e}_k)\Tilde{h}_{3k1}\cos\theta_1\cos\theta_2-\omega_4^1(\Tilde{e}_k)\Tilde{h}_{4k1}
    \\&-\sum_{\alpha,\beta}\omega_\beta^2(\Tilde{e}_k)(\omega^1\wedge\omega^\beta)(\Tilde{e}_\alpha,\Tilde{e}_2)\Tilde{h}_{\alpha k1}=\omega_4^2(\Tilde{e}_k)\Tilde{h}_{3k1}\sin\theta_1\sin\theta_2
    \\&-\sum_{\alpha,\beta}\omega_\beta^1(\Tilde{e}_k)(\omega^\beta\wedge\omega^2)(\Tilde{e}_1,\Tilde{e}_\alpha)\Tilde{h}_{\alpha k2}=\omega_3^1(\Tilde{e}_k)\Tilde{h}_{4k2}\sin\theta_1\sin\theta_2
    \\&-\sum_{\alpha,\beta}\omega_\beta^2(\Tilde{e}_k)(\omega^1\wedge\omega^\beta)(\Tilde{e}_1,\Tilde{e}_\alpha)\Tilde{h}_{\alpha k2}=-\omega_3^2(\Tilde{e}_k)\Tilde{h}_{3k2}-\omega_4^2(\Tilde{e}_k)\Tilde{h}_{4k2}\cos\theta_1\cos\theta_2.
\end{align*}   
All terms related to $\sin$ are bounded by $s$ and connection 1-forms are bounded. So we get
\begin{align*}
   &\Big|\sum_{\alpha=3,4}\sum_{k=1,2}\Tilde{\Omega}_{\alpha 2,k}\Tilde{h}_{\alpha k1}+\Tilde{\Omega}_{1\alpha, k}\Tilde{h}_{\alpha k2}\Big|
   \\&\kern1em\leq \Big|\omega_1^3(\Tilde{e}_k)\Tilde{h}_{3k1}\cos\theta_1\cos\theta_2+\omega_1^4(\Tilde{e}_k)\Tilde{h}_{4k1}\cos^2\theta_2+\omega_2^3(\Tilde{e}_k)\Tilde{h}_{3k2}\cos^2\theta_1+\omega_2^4(\Tilde{e}_k)\Tilde{h}_{4k2}\cos\theta_1\cos\theta_2\Big|+c|\mathrm{II}^{\Gamma}|s^2.
\end{align*}
By \Cref{L4.5}, connection 1-forms in the tubular neighborhood satisfy
\begin{align*}
    \omega^\alpha_i(\Tilde{e}_k)|_q&=\cos\theta_k\omega^\alpha_i(e_k)+\sin\theta_k\omega^\alpha_i(e_{k+2})
    \\&=\cos\theta_k\big[h_{\alpha ik}|_p+y^\beta(\overline{R}_{\alpha i k\beta}+h_{\alpha il}h_{\beta kl})|_p\big]+\sin\theta_k(\frac{1}{2}y^\gamma \overline{R}_{\alpha i(k+2)\gamma })+O(|y|^2)
\end{align*}
then 
\begin{align*}
    \Big|\omega^\alpha_i(\Tilde{e}_k)-\cos\theta_k\big[h_{\alpha ik}|_p+y^\beta(\overline{R}_{\alpha ik\beta }+h_{\alpha il}h_{\beta kl})|_p\big]\Big|\leq c(s^2+\psi).
\end{align*}
Thus, we have two terms 

\begin{enumerate}
    \item [(a)] \begin{align*}
    &\sum_{\alpha ik}h_{\alpha ki}\Tilde{h}_{\alpha ki}\cos\theta_k\frac{\cos\theta_\alpha}{\cos\theta_i}(*\Omega),
\end{align*}

\item [(b)] \begin{align*}
    \sum_{\alpha ik}\Tilde{h}_{\alpha ki}\cos\theta_k\frac{\cos\theta_\alpha}{\cos\theta_i}y^\beta(\overline{R}_{\alpha ik\beta }+h_{\alpha il} h_{\beta kl})(*\Omega).
\end{align*}

\end{enumerate}
Note that
\begin{align*}
    |\frac{1}{\cos\theta_i}-\cos\theta_i|=|\frac{\sin^2\theta_i}{\cos\theta_i}|\leq 2s^2,
\end{align*}
so
\begin{align*}
    &\Big|(a)-\langle \mathrm{II}^{\Gamma},\mathrm{II}^{\Sigma}\rangle|_q(*\Omega)\Big|\leq c |\mathrm{II}^{\Gamma}|s^2
    \\&\Big|(b)-\langle \mathrm{II}^{\Gamma},S^{\Sigma}\rangle|_q(*\Omega)\Big|\leq  c |\mathrm{II}^{\Gamma}|\sqrt{\psi}s^2
\end{align*}

\end{proof}

\begin{lemma} \label{L4.8}

\begin{align*}
   \Big|(\nabla_{\Tilde{e}_k}\Omega)(\Tilde{e}_1,\Tilde{e}_2)\Big|\leq c(s^2+\psi)+cs
\end{align*}
and 
\begin{align*}
    \big|\nabla(*\Omega)\big|^2\leq cs^2 |\mathrm{II}^\Gamma-\mathrm{II}^\Sigma|^2+c(s^2+\psi)^2.
\end{align*}

\end{lemma}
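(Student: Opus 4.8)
The plan is to differentiate the scalar $*\Omega=\Omega(\Tilde e_1,\Tilde e_2)$ directly, using the singular value decomposition $\Tilde e_1=\cos\theta_1 e_1+\sin\theta_1 e_3$, $\Tilde e_2=\cos\theta_2 e_2+\sin\theta_2 e_4$ together with the connection-form expansions of \Cref{L4.5}. For the first estimate, I would start from the identity
\begin{align*}
\overline{\nabla}_{\Tilde e_k}\Omega=-\omega_\beta^1(\Tilde e_k)\,\omega^\beta\wedge\omega^2-\omega_\beta^2(\Tilde e_k)\,\omega^1\wedge\omega^\beta
\end{align*}
established in the proof of the preceding lemma, and evaluate it on $(\Tilde e_1,\Tilde e_2)$. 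A short computation with the above frame shows that only $\beta=3$ survives in the first sum and only $\beta=4$ in the second, giving
\begin{align*}
(\overline{\nabla}_{\Tilde e_k}\Omega)(\Tilde e_1,\Tilde e_2)=-\omega_3^1(\Tilde e_k)\sin\theta_1\cos\theta_2-\omega_4^2(\Tilde e_k)\cos\theta_1\sin\theta_2.
\end{align*}
Since the connection $1$-forms are bounded (\Cref{L4.5}) and each term carries a factor $\sin\theta_1$ or $\sin\theta_2$ controlled by $s$, the first estimate follows, the remainder $c(s^2+\psi)$ absorbing the error terms in the expansion.

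For the second estimate I would compute the intrinsic gradient of $*\Omega$ on $\Gamma$ by expanding
\begin{align*}
\Tilde e_k(*\Omega)=(\overline{\nabla}_{\Tilde e_k}\Omega)(\Tilde e_1,\Tilde e_2)+\Omega(\overline{\nabla}_{\Tilde e_k}\Tilde e_1,\Tilde e_2)+\Omega(\Tilde e_1,\overline{\nabla}_{\Tilde e_k}\Tilde e_2).
\end{align*}
Splitting $\overline{\nabla}_{\Tilde e_k}\Tilde e_i$ into its tangential and normal parts, the tangential parts contribute nothing: since $\{\Tilde e_1,\Tilde e_2\}$ is orthonormal one has $\langle \nabla^\Gamma_{\Tilde e_k}\Tilde e_1,\Tilde e_1\rangle=0$, and the remaining tangential component is proportional to $\Tilde e_2$, which is annihilated by $\Omega(\,\cdot\,,\Tilde e_2)$; the other term is handled symmetrically. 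Hence only the second fundamental form of $\Gamma$ remains, and with $\Tilde\Omega_{AB}=\Omega(\Tilde e_A,\Tilde e_B)$,
\begin{align*}
\Tilde e_k(*\Omega)=(\overline{\nabla}_{\Tilde e_k}\Omega)(\Tilde e_1,\Tilde e_2)+\Tilde h_{\alpha k1}\Tilde\Omega_{\alpha 2}+\Tilde h_{\alpha k2}\Tilde\Omega_{1\alpha}.
\end{align*}

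Next I would evaluate the components $\Tilde\Omega_{\alpha 2}$, $\Tilde\Omega_{1\alpha}$ with the singular value decomposition; only $\Tilde\Omega_{32}=-\sin\theta_1\cos\theta_2$ and $\Tilde\Omega_{14}=-\cos\theta_1\sin\theta_2$ are nonzero. Combining with the first display gives the clean expression
\begin{align*}
\Tilde e_k(*\Omega)=-\big(\omega_3^1(\Tilde e_k)+\Tilde h_{3k1}\big)\sin\theta_1\cos\theta_2-\big(\omega_4^2(\Tilde e_k)+\Tilde h_{4k2}\big)\cos\theta_1\sin\theta_2.
\end{align*}
The decisive step is to recognize each parenthesis as a component of $\mathrm{II}^\Gamma-\mathrm{II}^\Sigma$ up to error $O(s^2+\psi)$. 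By \Cref{L4.5} together with $\Tilde e_k=\cos\theta_k e_k+\sin\theta_k e_{k+2}$ one has $\omega_1^3(\Tilde e_k)=\cos\theta_k\,h_{3k1}|_p+O(s^2+\psi)$, so using $\omega_3^1=-\omega_1^3$, the symmetry $h_{31k}=h_{3k1}$, and $\cos\theta_k=1+O(s^2)$ we get $\omega_3^1(\Tilde e_k)+\Tilde h_{3k1}=\Tilde h_{3k1}-h_{3k1}|_p+O(s^2+\psi)$, and symmetrically for the second parenthesis.

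Finally, since each parenthesis is multiplied by a factor $\sin\theta_1$ or $\sin\theta_2$ bounded by $s$, and each difference $\Tilde h_{\alpha ki}-h_{\alpha ki}|_p$ is a component of $\mathrm{II}^\Gamma-\mathrm{II}^\Sigma$, the bound $|\Tilde e_k(*\Omega)|\le cs\,|\mathrm{II}^\Gamma-\mathrm{II}^\Sigma|+c(s^2+\psi)$ follows (the product of $s$ with the $O(s^2+\psi)$ remainder being itself $\le c(s^2+\psi)$); squaring and summing over $k$ gives the stated estimate for $|\nabla(*\Omega)|^2$. The main obstacle is precisely this last identification: one must track carefully which terms of the \Cref{L4.5} expansion are $O(s)$ and which are $O(s^2+\psi)$, and verify that comparing $\Tilde h$ (taken in the $\Tilde e$ frame) with $h|_p$ (taken in the $e$ frame) is legitimate to leading order, the frame discrepancy being $O(s)$ and hence harmless once multiplied by the surviving $\sin$ factor.
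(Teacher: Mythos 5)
Your proposal follows essentially the same route as the paper's proof: the same identity for $\overline{\nabla}_{\Tilde{e}_k}\Omega$, the same decomposition of $\Tilde{e}_k(*\Omega)$ into the ambient derivative of $\Omega$ plus the $\Tilde{h}_{\alpha k i}\Tilde{\Omega}$ terms, and the same use of \Cref{L4.5} to recognize the surviving bracket as a component of $\mathrm{II}^\Gamma-\mathrm{II}^\Sigma$ times a factor bounded by $s$. The only slip is cosmetic: the \Cref{L4.5} remainder in $\omega_1^3(\Tilde{e}_k)$ contains the term $\cos\theta_k\, y^\beta(\overline{R}_{31k\beta}+h_{31l}h_{\beta kl})$, which is $O(\sqrt{\psi})$ rather than $O(s^2+\psi)$ as you state, but it becomes $O(s\sqrt{\psi})\le c(s^2+\psi)$ after multiplication by the surviving $\sin\theta_i$ factor, exactly as in the paper.
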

\begin{proof}
First, we get
\begin{align*}
   (\nabla_{\Tilde{e}_k}\Omega)(\Tilde{e}_1,\Tilde{e}_2)&=\omega^\beta_1(\Tilde{e}_k)\omega^\beta\wedge\omega^2(\Tilde{e}_1,\Tilde{e}_2)+\omega^\beta_2(\Tilde{e}_k)\omega^1\wedge\omega^\beta(\Tilde{e}_1,\Tilde{e}_2)
   \\&=\omega^3_1(\Tilde{e}_k)\sin\theta_1\cos\theta_2+\omega^4_2(\Tilde{e}_k)\cos\theta_1\sin\theta_2
   \\&=\sum_{i} \omega_i^{i+2}(\tilde{e}_k)\frac{\sin\theta_i}{\cos\theta_i}(*\Omega).
\end{align*}
Due to \Cref{L4.5},
we know that
\begin{align*}
    &\Big|(\nabla_{\Tilde{e}_k}\Omega)(\Tilde{e}_1,\Tilde{e}_2)-\sum_i \cos\theta_k h_{(i+2)ik}\frac{\sin\theta_i}{\cos\theta_i}(*\Omega)-\sum_i\cos\theta_ky^\beta(\overline{R}_{(i+2) ik\beta }+h_{(i+2) il}h_{\beta kl})\frac{\sin\theta_i}{\cos\theta_i}(*\Omega)\Big|\leq c(s^2+\psi)
    \\&\Longrightarrow \Big|(\nabla_{\Tilde{e}_k}\Omega)(\Tilde{e}_1,\Tilde{e}_2)-\sum_i \cos\theta_k h_{(i+2)ik}\frac{\sin\theta_i}{\cos\theta_i}(*\Omega)\Big|\leq c(s^2+\psi)
    \\&\Longrightarrow \Big|(\nabla_{\Tilde{e}_k}\Omega)(\Tilde{e}_1,\Tilde{e}_2)-\sum_i \cos\theta_k\cos^2\theta_i h_{(i+2)ik}\frac{\sin\theta_i}{\cos\theta_i}(*\Omega)\Big|\leq c(s^2+\psi)
\end{align*}
Next,  
\begin{align*}
    \nabla(*\Omega)&=\Tilde{e}_k(\Omega(\Tilde{e}_1,\Tilde{e}_2))\Tilde{\omega}^k
    \\&=[\nabla_{\Tilde{e}_k}\Omega)(\Tilde{e}_1,\Tilde{e}_2)+\Omega(\nabla_{\Tilde{e}_k}\Tilde{e}_1,\Tilde{e}_2)+\Omega(\Tilde{e}_1,\nabla_{\Tilde{e}_k}\Tilde{e}_2)]\Tilde{\omega}^k
    \\&=[(\nabla_{\Tilde{e}_k}\Omega)(\Tilde{e}_1,\Tilde{e}_2)+\Tilde{h}_{\alpha k1}\Omega(\Tilde{e}_\alpha,\Tilde{e}_2)+\Tilde{h}_{\alpha k2}\Omega(\Tilde{e}_1,\Tilde{e}_\alpha)]\Tilde{\omega}^k.
\end{align*}
and
\begin{align*}
    \Tilde{h}_{\alpha k1}\Omega(\Tilde{e}_\alpha,\Tilde{e}_2)+\Tilde{h}_{\alpha k2}\Omega(\Tilde{e}_1,\Tilde{e}_\alpha)&=-\tilde{h}_{3k1}\cos\theta_2\sin\theta_1-\tilde{h}_{4k2}\cos\theta_1\sin\theta_2
    \\&=-\sum_i(\frac{\sin\theta_i}{\cos\theta_i}\tilde{h}_{(i+2)ik})(*\Omega).
\end{align*}
Thus,
\begin{align*}
    |\nabla(*\Omega)|^2&=|\Tilde{e}_k(\Omega(\Tilde{e}_1,\Tilde{e}_2))|^2
    \\&\leq |(\nabla_{\Tilde{e}_k}\Omega)(\Tilde{e}_1,\Tilde{e}_2)-\cos\theta_k\cos^2\theta_i h_{(i+2)ik}\frac{\sin\theta_i}{\cos\theta_i}(*\Omega)|^2+(*\Omega)^2|\frac{\sin\theta_i}{\cos\theta_i}( \cos\theta_k\cos^2\theta_i h_{(i+2)ik}-\tilde{h}_{(i+2)ik})|^2
    \\&\leq c(s^2+\psi)^2+cs^2(*\Omega)^2|\frac{\sin\theta_i}{\cos\theta_i}( \cos\theta_k\cos^2\theta_i h_{(i+2)ik}-\tilde{h}_{(i+2)ik})|^2
\end{align*}
We observe that
\begin{align*}
    |\mathrm{II}^\Gamma-\mathrm{II}^\Sigma|^2\geq \sum_{\alpha ij}|\tilde{h}_{\alpha ij}-\cos\theta_i\cos\theta_j\cos\theta_\alpha h_{\alpha ij}|^2
\end{align*}
Then we complete the proof.

\end{proof}

\begin{lemma} \label{L4.9}

\begin{align*}
    \Big|(\nabla^2_{\tilde{e}_k,\tilde{e}_k}{\Omega})(\tilde{e}_1,\tilde{e}_2)-(\Tilde{\Omega}_{32}\overline{R}_{\tilde{1}\tilde{k}\tilde{k}\tilde{3}}+\Tilde{\Omega}_{42}\overline{R}_{\tilde{1}\tilde{k}\tilde{k}\tilde{4}}+\Tilde{\Omega}_{13}\overline{R}_{\tilde{2}\tilde{k}\tilde{k}\tilde{3}}+\Tilde{\Omega}_{14}\overline{R}_{\tilde{2}\tilde{k}\tilde{k}\tilde{4}})+|\mathrm{II}^\Sigma+S^\Sigma|^2(*\Omega)\Big|\leq c(s^2+\psi).
\end{align*}
Here, $\overline{R}_{\tilde{1}\tilde{k}\tilde{k}\tilde{3}}=\overline{g}(\overline{R}(\tilde{e}_1,\tilde{e}_k)\tilde{e}_k,\tilde{e}_3)$. Others are similar.
\end{lemma}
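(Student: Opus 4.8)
The plan is to differentiate once more the first-order identity of \Cref{L4.8}. With $\beta\in\{3,4\}$ denoting the normal indices, that lemma gives $\overline{\nabla}_{\Tilde{e}_k}\Omega=-\omega^1_\beta(\Tilde{e}_k)\,\omega^\beta\wedge\omega^2-\omega^2_\beta(\Tilde{e}_k)\,\omega^1\wedge\omega^\beta$. I would apply $\overline{\nabla}_{\Tilde{e}_k}$ a second time, sum over $k$, subtract the Hessian correction term, evaluate on $(\Tilde{e}_1,\Tilde{e}_2)$, and then sort the result into three groups: products of two connection $1$-forms, genuine $\Tilde{e}_k$-derivatives of a single connection coefficient, and terms carrying an explicit $\sin\theta_i$ or $y^\alpha$. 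The claim is that the first group produces $-|\mathrm{II}^\Sigma+S^\Sigma|^2(*\Omega)$, the second produces the four curvature terms, and the third is the $c(s^2+\psi)$ remainder. This mirrors the structure of the Laplacian-of-two-form formula used in \Cref{P2.11}.

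For the quadratic group I would invoke the expansion already recorded in the proof of \Cref{L4.8}, namely $\omega^\alpha_i(\Tilde{e}_k)|_q=\cos\theta_k\big[h_{\alpha ik}+y^\beta(\overline{R}_{\alpha ik\beta}+h_{\alpha il}h_{\beta kl})\big]\big|_p+O(s^2+\psi)$; that is, each connection coefficient equals $\cos\theta_k$ times the $(\alpha,i,k)$-component of $\mathrm{II}^\Sigma+S^\Sigma$ up to the error budget. Squaring, summing over $\alpha,i,k$, and using $*\Omega=\cos\theta_1\cos\theta_2=1+O(s^2)$ collapses these into $-|\mathrm{II}^\Sigma+S^\Sigma|^2(*\Omega)$ with an $O(s^2+\psi)$ error; note that this term is built purely from the data of $\Sigma$, as required.

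For the curvature group I would rewrite each $\Tilde{e}_k$-derivative of $\omega^\beta_1(\Tilde{e}_k)$ by the second structure equation $R^\beta_1=d\omega^\beta_1-\omega^C_1\wedge\omega^\beta_C$, which isolates the ambient curvature $\overline{R}$, and by the Codazzi equation, which turns the remaining first derivative $h_{\beta 1k,k}$ of the second fundamental form into $\nabla H$ plus curvature. Since $\Sigma$ is complex Lagrangian, hence special Lagrangian, hence minimal, its mean curvature vanishes and the $\nabla H$ terms drop out, leaving only curvature. Because in the first-order formula the differentiated frame index is always a normal index $\beta$ while the paired index is tangent, the curvature that survives is always of mixed type; the accompanying forms $(\omega^\beta\wedge\omega^2)(\Tilde{e}_1,\Tilde{e}_2)$ and $(\omega^1\wedge\omega^\beta)(\Tilde{e}_1,\Tilde{e}_2)$ reduce, up to sign, to the mixed evaluations $\Tilde{\Omega}_{\beta 2}$ and $\Tilde{\Omega}_{1\beta}$, so that the four contributions assemble precisely into $\Tilde{\Omega}_{32}\overline{R}_{\tilde{1}\tilde{k}\tilde{k}\tilde{3}}+\Tilde{\Omega}_{42}\overline{R}_{\tilde{1}\tilde{k}\tilde{k}\tilde{4}}+\Tilde{\Omega}_{13}\overline{R}_{\tilde{2}\tilde{k}\tilde{k}\tilde{3}}+\Tilde{\Omega}_{14}\overline{R}_{\tilde{2}\tilde{k}\tilde{k}\tilde{4}}$.

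The remaining group collects every term with an explicit $\sin\theta_i$ (each bounded by $s$ and always paired with a second small factor, hence $O(s^2)$), every term of order $y^\alpha$ beyond those already used (bounded by $\sqrt\psi$ or $\psi$), and the $O(|x|^2+|y|^2)$ tails from \Cref{L4.4} and \Cref{L4.5}, all of which are controlled by $c(s^2+\psi)$. I expect the main obstacle to lie in the curvature group: several of the derivative-of-$\mathrm{II}$ pieces, as well as the frame-derivative (Christoffel) corrections coming from $\overline{\nabla}_{\Tilde{e}_k}\Tilde{e}_1$ and $\overline{\nabla}_{\Tilde{e}_k}\Tilde{e}_2$, are individually of size $O(s)$ --- larger than the target error --- so one must show that they cancel or reorganize exactly into the stated curvature, which is precisely where the minimality of $\Sigma$, the Codazzi equation, and a careful $\cos\theta_i/\sin\theta_i$ accounting become essential. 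Combining the three groups then yields the asserted estimate.
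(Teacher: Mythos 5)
Your plan is correct and follows essentially the same route as the paper: expand $\nabla^2\Omega$ into the quadratic connection-form block (which yields $-|\mathrm{II}^\Sigma+S^\Sigma|^2(*\Omega)$ via the expansion of $\omega^\alpha_i(\tilde e_k)$ from \Cref{L4.5}), the derivative-of-connection block (whose $O(s)$ contribution $\frac{\sin\theta_i}{\cos\theta_i}h_{(i+2)ik,k}(*\Omega)$ is exactly matched by the curvature combination through the traced Codazzi equation $\overline{R}_{(2+i)kki}=-h_{(2+i)ik,k}$, valid by minimality), and an $O(s^2+\psi)$ remainder. You correctly identify the one delicate point — that the $O(s)$ pieces must cancel exactly against the stated curvature terms rather than be absorbed into the error — which is precisely how the paper's proof closes.
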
  
\begin{proof}
\begin{align*}
    &\nabla\Omega=\nabla\omega^1\wedge\omega^2+\omega^1\wedge \nabla\omega^2=(\omega^\alpha_1\otimes\omega^\alpha)\wedge \omega^2+\omega^1\wedge(\omega^\alpha_2\otimes\omega^\alpha)=\omega^\alpha_1\otimes(\omega^\alpha\wedge\omega^2)+\omega^\alpha_2\otimes(\omega^1\wedge\omega^\alpha)
\end{align*}
then
\begin{align*}
    \nabla^2\Omega&=-(\omega^\alpha_1\otimes\omega^\alpha_1+\omega^\alpha_2\otimes\omega^\alpha_2)\otimes\Omega+(\nabla\omega^\alpha_1+\omega^\beta_1\otimes\omega^\alpha_\beta+\omega_2^\alpha\otimes\omega^1_2)\otimes(\omega^\alpha\wedge\omega^2)
    \\&\kern1em+(\nabla\omega^\alpha_2+\omega^\beta_2\otimes\omega^\alpha_\beta+\omega_1^\alpha\otimes\omega^2_1)\otimes(\omega^1\wedge\omega^\alpha)+2(\omega_1^3\otimes\omega_2^4-\omega_1^4\otimes\omega_2^3)\otimes(\omega^3\wedge\omega^4).
\end{align*}   
We estimate
\begin{align}\label{17}
  &\nonumber\Big|\sum_{\alpha,i,k}(\omega^\alpha_i(\tilde{e}_k))^2-\sum_{\alpha,i,k}\cos^2\theta_k[h_{\alpha ik}+y^\beta(\overline{R}_{\alpha i k\beta}+h_{\alpha il}h_{\beta kl})]^2\Big|\leq c(s^2+\psi)
  \\& \nonumber\Longrightarrow \Big|\sum_{\alpha,i,k}(\omega^\alpha_i(\tilde{e}_k))^2-\sum_{\alpha,i,k}[h_{\alpha ik}+y^\beta(\overline{R}_{\alpha i k\beta}+h_{\alpha il}h_{\beta kl})]^2\Big|\leq c(s^2+\psi)
  \\& \Longrightarrow \Big|\sum_{\alpha,i,k}(\omega^\alpha_i(\tilde{e}_k))^2-|\mathrm{II}^\Sigma+S^\Sigma|^2\Big|\leq c(s^2+\psi).
\end{align}
Note that
\begin{align*}
    \nabla\omega_i^\alpha=d\omega_i^\alpha(e_A)\otimes\omega^A+\omega_i^\alpha(e_A)\nabla\omega^A.
\end{align*}
By a direct computation and using \Cref{L4.4} and \Cref{L4.5} , we derive
\begin{align*}
    \big|d\omega_i^\alpha(e_A)(\tilde{e}_k)\omega^A(\tilde{e}_k)-\cos^2\theta_kh_{\alpha ik,k}\big|\leq c(s+\sqrt{\psi})
\end{align*}
and 
\begin{align*}
    \big|\omega_i^\alpha(e_A)\nabla\omega^A(\tilde{e}_k,\tilde{e}_k)\big|\leq c(s+\sqrt{\psi}).
\end{align*}
Therefore
\begin{align*}
    \big|(\nabla\omega_i^\alpha)(\tilde{e}_k,\tilde{e}_k)-\cos^2\theta_kh_{\alpha ik,k}\big|\leq c(s+\sqrt{\psi}).
\end{align*}
Next, one can see that
\begin{align*}
\big|(\omega^\beta_1\otimes\omega^\alpha_\beta+\omega_2^\alpha\otimes\omega^1_2)(\tilde{e}_k,\tilde{e}_k)\big|\leq c(s+\sqrt{\psi}).
\end{align*}
Then,
\begin{align}\label{18}
    &\nonumber \Big|(\nabla\omega^\alpha_1+\omega^\beta_1\otimes\omega^\alpha_\beta+\omega_2^\alpha\otimes\omega^1_2)(\tilde{e}_k,\tilde{e}_k)(\omega^\alpha\wedge\omega^2)(\tilde{e}_1,\tilde{e}_2)-h_{31k,k}\sin\theta_1\cos\theta_2\Big|\leq c(s^2+\psi), 
    \\&  \Big|(\nabla\omega^\alpha_2+\omega^\beta_2\otimes\omega^\alpha_\beta+\omega_1^\alpha\otimes\omega^2_1)(\tilde{e}_k,\tilde{e}_k)(\omega^1\wedge\omega^\alpha)(\tilde{e}_1,\tilde{e}_2)-h_{42k,k}\sin\theta_2\cos\theta_1\Big|\leq c(s^2+\psi) .
\end{align}
Combine (\ref{17}) and (\ref{18}), we derive that
\begin{align*}
    \Big|\nabla^2_{\tilde{e}_k,\tilde{e}_k}\Omega(\tilde{e}_1,\tilde{e}_2)+|\mathrm{II}^\Sigma+S^\Sigma|^2(*\Omega)-\frac{\sin\theta_i}{\cos\theta_i} h_{(2+i)ik,k}(*\Omega)\Big|\leq c(s^2+\psi). 
\end{align*}
Finally, we see that 
\begin{align*}
    \Tilde{\Omega}_{32}\overline{R}_{\tilde{1}\tilde{k}\tilde{k}\tilde{3}}+\Tilde{\Omega}_{42}\overline{R}_{\tilde{1}\tilde{k}\tilde{k}\tilde{4}}+\Tilde{\Omega}_{13}\overline{R}_{\tilde{2}\tilde{k}\tilde{k}\tilde{3}}+\Tilde{\Omega}_{14}\overline{R}_{\tilde{2}\tilde{k}\tilde{k}\tilde{4}}&=-\cos\theta_2\sin\theta_1\overline{R}_{\tilde{1}\tilde{k}\tilde{k}\tilde{3}}-\cos\theta_1\sin\theta_2\overline{R}_{\tilde{2}\tilde{k}\tilde{k}\tilde{4}}
    \\&=-\frac{\sin\theta_i}{\cos\theta_i}(*\Omega)\overline{R}_{\tilde{i}\tilde{k}\tilde{k}\tilde{(i+2)}}
\end{align*}
Then
\begin{align*}
\big|\frac{\sin\theta_i}{\cos\theta_i}(*\Omega)\overline{R}_{\tilde{i}\tilde{k}\tilde{k}\tilde{(i+2)}}-\frac{\sin\theta_i}{\cos\theta_i}(*\Omega)\overline{R}_{ikk(i+2)}\big|\leq c(s^2+\psi).
\end{align*}
By Codzzi equation $\overline{R}_{(2+i)kki}=-h_{(2+i)ik,k}$, we have   
\begin{align*}
     &\Big|(\nabla^2_{\tilde{e}_k,\tilde{e}_k}{\Omega})(\tilde{e}_1,\tilde{e}_2)-(\Tilde{\Omega}_{32}\overline{R}_{\tilde{1}\tilde{k}\tilde{k}\tilde{3}}+\Tilde{\Omega}_{42}\overline{R}_{\tilde{1}\tilde{k}\tilde{k}\tilde{4}}+\Tilde{\Omega}_{13}\overline{R}_{\tilde{2}\tilde{k}\tilde{k}\tilde{3}}+\Tilde{\Omega}_{14}\overline{R}_{\tilde{2}\tilde{k}\tilde{k}\tilde{4}})+|\mathrm{II}^\Sigma+S^\Sigma|^2(*\Omega)\Big|
     \\&=\Big|(\overline{\nabla}^2_{\tilde{e}_k,\tilde{e}_k}{\Omega})(\tilde{e}_1,\tilde{e}_2)+\frac{\sin\theta_i}{\cos\theta_i}(*\Omega)\overline{R}_{\tilde{i}\tilde{k}\tilde{k}\tilde{(i+2)}}+|\mathrm{II}^\Sigma+S^\Sigma|^2(*\Omega)\Big|
     \\&\leq \Big|(\overline{\nabla}^2_{\tilde{e}_k,\tilde{e}_k}{\Omega})(\tilde{e}_1,\tilde{e}_2)-\frac{\sin\theta_i}{\cos\theta_i}(*\Omega)h_{(2+i)ik,k}+|\mathrm{II}^\Sigma+S^\Sigma|^2(*\Omega)\Big|+c(s^2+\psi)
     \\&\leq c(s^2+\psi).
\end{align*}

\end{proof}

\subsection{$C^0$ and $C^1$ Estimate}
In this section, we begin to prove the stability of special hyperk\"{a}hler flow. Let $(M,\overline{g},\overline{\nabla},I,J,K)$ be a hyperkähler $4$-manifold, $\Sigma\subset M$ be a compact, oriented, strongly stable complex Lagrangian surface with respect to $I,K$ ,i.e., $\omega_I+i\omega_K\equiv 0$ on $\Sigma$. Let $\Gamma\subset U_\epsilon$ be a Lagrangian surface that is $C^1$ close to $\Sigma$
in the sense that 
\begin{align}
    \sup_{q\in\Gamma}\big(1-(*\Omega)+K\psi\big)<\kappa \quad \textrm{ for some constant $0<\kappa<<1$ and $K>0$.} 
\end{align}
Consider the special hyperk\"{a}hler flow $\Gamma^t$ with $\Gamma^0=\Gamma$ and write the second fundamental form by $\mathrm{II}^t$. First, we show that $\Gamma^t$ remain in the tubular neighborhood $U_\epsilon$.

\begin{lemma}\label{L4.10}
There exists a tubular neighborhood $U_\epsilon$ such that for any $q\in U_\epsilon$ and any oriented $2$-plane $L\subset T_qM$, we have
\begin{align}
    \textrm{tr}_L\textrm{Hess}(\psi)\geq c(s^2+\psi(q)) \quad \textrm{ for some $c>0$.}
\end{align}
\end{lemma}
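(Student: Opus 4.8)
The plan is to compute $\mathrm{Hess}(\psi)$ in the adapted frame $\{e_1,e_2,e_3,e_4\}$ of the tubular neighborhood and then evaluate its trace on $L=\mathrm{span}\{\tilde e_1,\tilde e_2\}$ through the singular value decomposition $\tilde e_k=\cos\theta_k e_k+\sin\theta_k e_{k+2}$. Since $\psi=\sum_\alpha (y^\alpha)^2$ is \emph{exactly} the squared distance to $\Sigma$, its gradient is radial, $\nabla\psi=2r\,\partial_r$ with $r=\sqrt\psi$, and $\mathrm{Hess}(\psi)=2\,dr\otimes dr+2r\,\mathrm{Hess}(r)$, where $\mathrm{Hess}(r)$ restricted to $\partial_r^\perp$ is the shape operator $S(r)$ of the distance tubes, solving the Riccati equation $S'=-S^2-\overline R(\cdot,\partial_r)\partial_r$ with $S(0)$ equal to the second fundamental form of $\Sigma$ on the horizontal block and to $\tfrac1r\mathrm{Id}+O(r)$ on the angular vertical direction. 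I would use this to bypass the awkward second derivatives of the Fermi coordinate functions; the same expansions also follow directly from \Cref{L4.4} and \Cref{L4.5}, with $-2\sum_\gamma \omega_j^\gamma(e_i)\,y^\gamma$ reproducing the horizontal block.

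I then split $\mathrm{Hess}(\psi)$ into its vertical--vertical, horizontal--horizontal, and mixed blocks. On the vertical block $\mathcal V_q$ the radial piece $2\,dr\otimes dr$ and the angular piece $2r\cdot\tfrac1r$ combine to give $\mathrm{Hess}(\psi)|_{\mathcal V\times\mathcal V}=2\,\overline g|_{\mathcal V}+O(\psi)$, so the $\sin\theta_k$ parts of $\tilde e_k$ contribute $\sum_k\sin^2\theta_k\,(2+O(\psi))\ge 2s^2-O(\psi s^2)$ since $s^2\le \sin^2\theta_1+\sin^2\theta_2$. On the horizontal (tangent to $\Sigma$) block the linear-in-$y$ term is $-2y^\gamma h_{\gamma ij}$, whose trace over the tangent part of $L$ is $-2y^\gamma H_\gamma$ up to weights, and this vanishes because $\Sigma$ is complex Lagrangian, hence special Lagrangian, hence minimal. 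The surviving quadratic horizontal term is $-2y^\gamma y^\delta(\overline R_{\gamma ij\delta}+h_{\gamma il}h_{\delta jl})$; writing $y^\gamma=r\nu^\gamma$ for the unit radial normal $\nu$ and using the antisymmetry $\overline R_{\gamma ii\delta}=-\overline R_{i\gamma i\delta}$ in the first pair of indices, its trace over $L$ becomes $2r^2\nu^\gamma\nu^\delta\big[\sum_i\overline R_{i\gamma i\delta}-\sum_{il}h_{\gamma il}h_{\delta il}\big]$, which by the strong stability inequality \eqref{st} is $\ge 2c_0 r^2=2c_0\psi$ for the strong stability constant $c_0>0$.

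Finally I would collect the error terms. The horizontal--vertical block is $2r\langle S e_k,e_{k+2}\rangle$, and since $S(0)$ maps the tangent block to itself the angular coupling is $O(r)$, so this block is $O(\psi)$ and contributes $O(\psi s)$ after the factor $\cos\theta_k\sin\theta_k$; the weight discrepancies $1-\cos^2\theta_k$ and $1-\cos\theta_i\cos\theta_j\cos\theta_\alpha$ are $O(s^2)$; the linear horizontal remainder after the minimal-surface cancellation is $O(\sqrt\psi\,s^2)$; and the Riccati remainder is $O(r^2)$. Each of these carries an extra factor of $s$ or $\sqrt\psi$ beyond $s^2+\psi$, so by Young's inequality they are all bounded by $C\epsilon(s^2+\psi)$ once the tube radius $\epsilon$ is chosen small. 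Adding the two main terms $\ge 2s^2$ and $\ge 2c_0\psi$ to these absorbable errors yields $\tr_L\mathrm{Hess}(\psi)\ge c(s^2+\psi)$.

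The main obstacle I anticipate is twofold. First, one must pin down the curvature and second-fundamental-form sign conventions so that the quadratic horizontal block is genuinely the strong stability operator $\mathcal R-\mathcal A$ rather than its negative; this hinges precisely on the first-pair antisymmetry of $\overline R$, and is what turns \eqref{st} into a lower bound. Second, one must check that every error term truly acquires the additional $\sqrt\psi$ or $s$ required for absorption — in particular that the potentially dangerous $O(\sqrt\psi)$ linear horizontal term only survives through the mean-curvature trace, so that minimality of $\Sigma$ is used in an essential way and is not merely decorative.
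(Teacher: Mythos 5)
Your proposal is correct and follows essentially the same route as the paper: expand $\mathrm{Hess}(\psi)$ blockwise via \Cref{L4.4} and \Cref{L4.5}, get $2\sin^2\theta_i$ from the vertical block, kill the linear horizontal term by minimality of $\Sigma$, turn the quadratic horizontal term into the strong stability operator \eqref{st} via the first-pair antisymmetry of $\overline{R}$, and absorb the remaining $O(s\psi)$, $O(s^2\sqrt{\psi})$, $O(\psi^{3/2})$ errors by shrinking $\epsilon$. The Riccati/shape-operator framing is only an optional repackaging of the same expansions, and your explicit treatment of the weighted mean-curvature cancellation is actually slightly more careful than the paper's, which drops that term silently.
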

\begin{proof}
For any $q\in U_\epsilon$, let $p\in\Sigma$ connect to $q$ by a normal geodesic. Consider the geodesic distance from the zero section $\psi=\sum_\alpha (y^\alpha)^2$. Then,
\begin{align*}
    \textrm{Hess}(\psi)(e_A,e_B)=e_A(e_B(\psi))-(\overline{\nabla}_{e_A}e_B)(\psi).
\end{align*}
By \Cref{L4.4} and \Cref{L4.5}, we compute
\begin{align*}
     &\textrm{Hess}(\psi)(e_i,e_j)=-2y^\alpha \omega_j^\alpha(e_i)=-2y^\alpha\Big[h_{\alpha ji}|_p+x^kh_{\alpha ji,k}|_p+y^\beta(\overline{R}_{\alpha ji \beta}+h_{\alpha jl}h_{\beta il})|_p+O(|x|^2+|y|^2)\Big],
    \\&\textrm{Hess}(\psi)(e_\alpha,e_i)=-2y^\beta \omega_i^\beta(e_\alpha)=-2y^\beta\Big[\frac{1}{2}y^\gamma \overline{R}_{\beta i\alpha\gamma}|_p+O(|x|^2+|y|^2)\Big],
\\&    \textrm{Hess}(\psi)(e_\alpha,e_\beta)=2e_\alpha(y^\beta)-2y^\gamma \omega_\beta^\gamma(e_\alpha)=2\delta_{\alpha\beta}-2y^\gamma\Big[\frac{1}{2}y^\delta \overline{R}_{\gamma\beta\alpha\delta}|_p+O(|x|^2+|y|^2)\Big].
\end{align*}
Note that $x^k$ component vanish,
\begin{align*}
    \textrm{tr}_L\textrm{Hess}(\psi)&=\textrm{Hess}(\psi)(\tilde{e_1},\tilde{e}_1)+\textrm{Hess}(\psi)(\tilde{e_2},\tilde{e}_2)
    \\&=\sum_i\cos^2\theta_i\textrm{Hess}(\psi)(e_i,e_i)+2\cos\theta_i\sin\theta_i\textrm{Hess}(\psi)(e_i,e_{i+2})+\sin^2\theta_i\textrm{Hess}(\psi)(e_{i+2},e_{i+2})
    \\&=\sum_i-2\cos^2\theta_iy^\alpha y^\beta(\overline{R}_{\alpha ii \beta}+h_{\alpha il}h_{\beta il})+2\sin^2\theta_i+s(L)\cdot O(|y|^2)+O(|y|^3)
    \\&\geq \sum_i2c_0\cos^2\theta_i|y|^2+2\sin^2\theta_i-s^2-c|y|^3
    \\&\geq (4c_0-c|y|)|y|^2+\sum_i(2-2c_0|y|^2)\sin^2\theta_i-s^2
    \\&\geq c'(s^2+\psi)
\end{align*}
Here, we use the strongly stable condition (\ref{st}).

\end{proof}

\begin{proposition} \label{P4.11}
There exists $\epsilon>0$ such that the square distance $\psi$ to the minimal strongly stable surface $\Sigma$ satisfies
 \begin{align*}
\frac{\partial}{\partial t}(\psi)\leq \lambda^2(\Delta^{\Gamma_t}(\psi)-c_1(s^2+\psi))+\langle \nabla(\frac{\lambda^2}{2}),\nabla\psi \rangle 
\end{align*}
for some $c_1>0$ in the tubular neighborhood $U_\epsilon$. Thus, $\psi$ is non-increasing by the maximum principle.  Moreover, there exists $c_2>0$ such that
\begin{align}\label{c0}
    \frac{\partial}{\partial t}(\psi)\leq \lambda^2(\Delta^{\Gamma_t}(\psi)-c_1(s^2+\psi)+c_2s^2\sqrt{\psi}|\mathrm{II}^t|).
\end{align}
\end{proposition}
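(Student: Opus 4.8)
The plan is to differentiate $\psi$ along the flow, rewrite the normal part of the velocity through the intrinsic Laplacian, invoke \Cref{L4.10} for the first inequality, and then separately estimate the tangential transport term to obtain the refined bound (\ref{c0}).

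First, since $\psi$ is a function on the ambient $M$, along the special $H$-flow $\partial_t f=\lambda\nabla\lambda+\lambda^2H$ one computes $\partial_t\psi=d\psi(\lambda\nabla\lambda+\lambda^2H)=\langle\nabla(\tfrac{\lambda^2}{2}),\nabla\psi\rangle+\lambda^2\,d\psi(H)$, where $\nabla\psi$ is the tangential gradient on $\Gamma_t$ and the tangential part $\lambda\nabla\lambda$ produces exactly the transport term. Choosing an orthonormal frame $\{\tilde e_1,\tilde e_2\}$ for $T\Gamma_t$ and using $\nabla^{\Gamma_t}_{\tilde e_k}\tilde e_k-\overline{\nabla}_{\tilde e_k}\tilde e_k=-\mathrm{II}(\tilde e_k,\tilde e_k)$, I would obtain $\Delta^{\Gamma_t}\psi=\textrm{tr}_{\Gamma_t}\textrm{Hess}(\psi)+d\psi(H)$, so that $\lambda^2\,d\psi(H)=\lambda^2\big(\Delta^{\Gamma_t}\psi-\textrm{tr}_{\Gamma_t}\textrm{Hess}(\psi)\big)$. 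Applying \Cref{L4.10} with $L=T_q\Gamma_t$ gives $\textrm{tr}_{\Gamma_t}\textrm{Hess}(\psi)\ge c_1(s^2+\psi)$, hence $\lambda^2\,d\psi(H)\le\lambda^2\big(\Delta^{\Gamma_t}\psi-c_1(s^2+\psi)\big)$, which upon substitution yields the first displayed inequality. Since $\Gamma_t$ is compact, at a spatial maximum of $\psi$ we have $\nabla\psi=0$ and $\Delta^{\Gamma_t}\psi\le0$, while $-\lambda^2c_1(s^2+\psi)\le0$; thus $\max_{\Gamma_t}\psi$ is non-increasing, and a continuity argument keeps $\Gamma_t\subset U_\epsilon$ for all time, so the estimates stay valid.

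For the refined bound (\ref{c0}) I would estimate the transport term $\langle\nabla(\tfrac{\lambda^2}{2}),\nabla\psi\rangle=\lambda\lambda_k\,\tilde e_k(\psi)$. From the special-flow identity (\ref{Lam}), $\lambda_k=-\lambda^2\eta_1H_k$, so $\lambda|\nabla\lambda|\le C|\eta_1|\,|\mathrm{II}^t|$. Because $\Sigma$ is complex Lagrangian, $\eta_1\equiv0$ on $\Sigma$ and $\lambda=1$ there; the exact relation $\eta_1^2=1-\lambda^{-2}$ together with $1-\ast\Omega=O(s^2)$ (hence $\lambda-1=O(s^2)$ by the calibration property) forces $|\eta_1|\le Cs$, so $\lambda|\nabla\lambda|\le Cs\,|\mathrm{II}^t|$. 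On the other hand, \Cref{L4.4} shows that $\psi=\sum_\alpha(y^\alpha)^2$ depends only on the vertical coordinates while $\tilde e_k=\cos\theta_k e_k+\sin\theta_k e_{k+2}$ has vertical component $\sin\theta_k=O(s)$, giving $\tilde e_k(\psi)=2\sin\theta_k\,y^{k+2}+(\text{higher order})$ and therefore $|\nabla^{\Gamma_t}\psi|\le Cs\sqrt{\psi}$ up to terms absorbable into $-c_1(s^2+\psi)$. Combining the two factors and using $\lambda\ge1$ yields $|\langle\nabla(\tfrac{\lambda^2}{2}),\nabla\psi\rangle|\le\lambda^2c_2\,s^2\sqrt{\psi}\,|\mathrm{II}^t|$, which inserted into the first inequality gives (\ref{c0}).

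The main obstacle is extracting both powers of $s$ from the transport term simultaneously: one factor comes from $\nabla\lambda$ via $\eta_1=O(s)$, which relies essentially on the complex Lagrangian (special Lagrangian) structure forcing $\eta_1$ to vanish on $\Sigma$, and the other from $\nabla^{\Gamma_t}\psi$ via the $O(s)$ vertical tilt of the tangent frame. A bound with only a single power of $s$ could not be absorbed against the negative term $-c_1(s^2+\psi)$, so the quadratic gain in $s$ — in particular the estimate $|\eta_1|\le Cs$ — is the crucial non-routine step, while the remaining lower-order contributions are handled by routine absorption.
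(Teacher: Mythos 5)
Your proposal is correct and follows essentially the same route as the paper: the same decomposition $\partial_t\psi=\lambda^2(\Delta^{\Gamma_t}\psi-\mathrm{tr}_{\Gamma_t}\mathrm{Hess}(\psi))+d\psi(\lambda\nabla\lambda)$, the same appeal to Lemma \ref{L4.10}, and the same two sources for the factors of $s$ in the transport term (one from the vertical tilt of $\tilde e_k$ acting on $\psi=\sum_\alpha(y^\alpha)^2$, one from $|\nabla\lambda|\leq cs\,|\mathrm{II}^t|$). The only cosmetic difference is that you obtain $|\nabla\lambda|\leq cs\,|\mathrm{II}^t|$ from the identity (\ref{Lam}) together with $|\eta_1|=O(s)$, whereas the paper differentiates $\lambda^{-1}=\omega_J(\tilde e_1,\tilde e_2)$ directly in the adapted frame — these are the same computation, and your "higher order" corrections in $\tilde e_k(\psi)$ in fact vanish identically since $d\psi=2y^\alpha\omega^\alpha$ exactly by the Gauss lemma.
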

\begin{proof}
We compute
\begin{align*}
\frac{\partial}{\partial t}(\psi)&=\overline{\nabla}_{\lambda\nabla\lambda+\lambda^2 H}\psi=\overline{\nabla}_{\lambda\nabla\lambda}\psi+\overline{\nabla}_{\lambda^2H}\psi
\\&=\lambda^2(\Delta^{f_t(S)}\psi-\mathrm{tr}_{f_t(S)}\mathrm{Hess}(\psi))+\overline{\nabla}_{\lambda\nabla\lambda}\psi.
\end{align*}  
By \Cref{L4.10}, we know that there exists $\epsilon>0$ and $c_1>0$ such that 
\begin{align*}
    \mathrm{tr}_{f_t(S)}\mathrm{Hess}(\psi)\geq c_1(s^2+\psi).
\end{align*}
Also, 
\begin{align*}
    d\psi(\lambda\nabla\lambda)=2\lambda y_\alpha\omega_\alpha(\nabla \lambda)=2\lambda\big[y_3\Tilde{e}_1(\lambda)\sin\theta_1+y_4\Tilde{e}_2(\lambda)\sin\theta_2\big].
\end{align*}
Then,
\begin{align} \label{ei}
    \tilde{e}_i(\lambda)\nonumber&=-\lambda^2(\omega_J(\overline{\nabla}_{\tilde{e}_i}\tilde{e}_1,\tilde{e}_2)+\omega_J(\tilde{e}_1,\overline{\nabla}_{\tilde{e}_i}\tilde{e}_2))
    \\&\nonumber=-\lambda^2\big[\tilde{h}_{3i1}\omega_J(\tilde{e}_3,\tilde{e}_2)+\tilde{h}_{4i1}\omega_J(\tilde{e}_4,\tilde{e}_2)+\tilde{h}_{3i2}\omega_J(\tilde{e}_1,\tilde{e}_3)+\tilde{h}_{4i2}\omega_J(\tilde{e}_1,\tilde{e}_4)\big]
    \\&\nonumber=-\lambda^2\big[\tilde{h}_{3i1}(-\sin\theta_1\cos\theta_2\omega_J(e_1,e_2)+\cos\theta_1\sin\theta_2\omega_J(e_3,e_4))\big]
    \\&\kern1em-\lambda^2\big[\tilde{h}_{4i2}(-\cos\theta_1\sin\theta_2\omega_J(e_1,e_2)+\sin\theta_1\cos\theta_2\omega_J(e_3,e_4))\big].
\end{align}
Thus, $|\tilde{e}_i(\lambda)|\leq cs|\mathrm{II}^t|$ and we complete the proof.

\end{proof}

\begin{proposition}\label{P4.12}
Let $(M,\overline{g},\overline{\nabla},I,J,K)$ be a hyperkähler $4$-manifold, $\Sigma\subset M$ be a compact, oriented, strongly stable complex Lagrangian surface with respect to $I,K$ ,i.e., $\omega_I+i\omega_K\equiv 0$ on $\Sigma$. Let $\Gamma\subset U_\epsilon$ be a Lagrangian surface with
\begin{align} \label{CC}
    \sup_{q\in\Gamma}\big(1-(*\Omega)+K\psi\big)<\kappa \quad \textrm{ for some constant $0<\kappa<<1$ and $K>0$.} 
\end{align}
Then, the solution of the special hyperk\"{a}hler flow $\Gamma^t$ with $\Gamma^0=\Gamma$ satisfies (\ref{CC}) if it exists.   
\end{proposition}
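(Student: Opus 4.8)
The plan is to run a maximum principle on the single quantity $u := 1-(*\Omega)+K\psi$ and to show that $\sup_{\Gamma^t}u$ is non-increasing; since $1-(*\Omega)\geq 0$ and $K\psi\geq 0$, this preserves the $C^1$-closeness \eqref{CC} and at the same time keeps $\psi<\kappa/K$, so $\Gamma^t$ stays inside $U_\epsilon$ once $\epsilon$ is fixed accordingly. The evolution of $\psi$ is already recorded in \eqref{c0} of \Cref{P4.11}, where the tangential transport $\langle\nabla\tfrac{\lambda^2}{2},\nabla\psi\rangle$ has already been absorbed into the term $c_2s^2\sqrt\psi\,|\mathrm{II}^t|$ using the bound $|\tilde e_i(\lambda)|\leq cs|\mathrm{II}^t|$ from \eqref{ei}. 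What remains is to derive a matching inequality for $1-(*\Omega)$ and then combine.

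The first step is to compute $\tfrac{\partial}{\partial t}(*\Omega)$. Since $f^\star\Omega=(*\Omega)\,d\mu$, the function $*\Omega$ is exactly the quantity $\eta$ of \Cref{P2.11} for the choice $\overline\omega=\Omega$, so that formula applies directly. Because $\Omega$ is only covariantly constant along normal geodesics, and not parallel like a K\"ahler form, the terms $\overline\omega_{12,kk}$, $\overline\omega_{\alpha2,k}$, $\overline\omega_{1\alpha,k}$ and $(\overline\nabla_{\lambda\nabla\lambda+\lambda^2H}\overline\omega)(e_1,e_2)$ of \Cref{P2.11} do not drop out when $\overline\omega=\Omega$; these are precisely the expressions controlled by \Cref{L4.8}, \Cref{L4.9}, and the two estimates immediately preceding \Cref{L4.8}.

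The heart of the argument is to recognize a complete square. By \Cref{L4.9}, the second-order term $-\lambda^2\overline\omega_{12,kk}$ produces $+\lambda^2|\mathrm{II}^\Sigma+S^\Sigma|^2(*\Omega)$ together with ambient-curvature terms that cancel the partial-Ricci terms of \Cref{P2.11} up to an $O(s^2+\psi)$ error (this cancellation is forced, since $\overline\Omega_{\alpha2}=O(s)$ makes both curvature contributions small); by the lemma estimating $\overline\omega_{\alpha2,k}\tilde h_{\alpha k1}+\overline\omega_{1\alpha,k}\tilde h_{\alpha k2}$, the cross terms contribute $-2\lambda^2\langle\mathrm{II}^t,\mathrm{II}^\Sigma+S^\Sigma\rangle(*\Omega)$; and the reaction term $\lambda^2\eta|A|^2$ equals $\lambda^2(*\Omega)|\mathrm{II}^t|^2$. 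Adding these three assembles
\begin{align*}
\lambda^2(*\Omega)\big[\,|\mathrm{II}^t|^2-2\langle\mathrm{II}^t,\mathrm{II}^\Sigma+S^\Sigma\rangle+|\mathrm{II}^\Sigma+S^\Sigma|^2\,\big]=\lambda^2(*\Omega)\,|\mathrm{II}^t-\mathrm{II}^\Sigma-S^\Sigma|^2\geq 0,
\end{align*}
which enters $\tfrac{\partial}{\partial t}(1-(*\Omega))$ with a negative sign. The remaining first-order terms (the $\lambda\lambda_i$-terms and $(\overline\nabla_{\lambda\nabla\lambda+\lambda^2H}\Omega)(e_1,e_2)$) are, by \eqref{ei}, \Cref{L4.8} and \Cref{L4.5}, linear in $|\mathrm{II}^t|$ with coefficients $O(s^2+\sqrt\psi)$. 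Hence, using $*\Omega\geq1-\kappa$ together with $|\mathrm{II}^t-\mathrm{II}^\Sigma-S^\Sigma|^2\geq\tfrac12|\mathrm{II}^t|^2-|\mathrm{II}^\Sigma+S^\Sigma|^2$, I expect to reach
\begin{align*}
\frac{\partial}{\partial t}(1-(*\Omega))\leq \lambda^2\Delta^{\Gamma_t}(1-(*\Omega))-c\,\lambda^2|\mathrm{II}^t|^2+C\lambda^2(s^2+\psi)\big(1+|\mathrm{II}^t|\big),
\end{align*}
where $|\mathrm{II}^\Sigma+S^\Sigma|^2$ is bounded because $\Sigma$ is compact and $S^\Sigma=O(\sqrt\psi)$.

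Finally I would set $u=1-(*\Omega)+K\psi$ and add $K$ times \eqref{c0}; the strong-stability input enters through the negative term $-Kc_1\lambda^2(s^2+\psi)$, coming from $\mathrm{tr}_L\mathrm{Hess}(\psi)\geq c_1(s^2+\psi)$ in \Cref{L4.10}. At a spatial maximum of $u$ one has $\Delta^{\Gamma_t}u\leq0$, and all terms linear in the not-yet-controlled $|\mathrm{II}^t|$, both $C\lambda^2(s^2+\psi)|\mathrm{II}^t|$ and $Kc_2\lambda^2s^2\sqrt\psi\,|\mathrm{II}^t|$, are absorbed by the good term $-c\lambda^2|\mathrm{II}^t|^2$ via Young's inequality, leaving a purely zeroth-order remainder. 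Using the comparison $1-(*\Omega)\sim s^2$ for the bookkeeping, choosing $K$ large enough that $Kc_1$ dominates $C$ and then $\kappa$ small enough that the $K^2$-weighted Young residuals are swallowed by $Kc_1(s^2+\psi)$, the total reaction at the maximum is $\leq0$, so $\tfrac{d}{dt}\sup_{\Gamma^t}u\leq0$ and \eqref{CC} is preserved. The main obstacle is exactly this absorption step: since $|\mathrm{II}^t|$ has not yet been bounded, every error that is linear or quadratic in it must be swallowed either by the complete-square term $-c\lambda^2|\mathrm{II}^t|^2$, whose coefficient is $\approx 1$ and therefore beats the small coefficients $s^2,\psi<\kappa$ of the quadratic errors, or by the strong-stability term, and verifying that all constants genuinely close is what dictates the admissible choice of $K$ and $\kappa$ in \eqref{CC}.
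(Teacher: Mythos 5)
Your proposal is correct and follows essentially the same route as the paper's proof: apply \Cref{P2.11} with $\overline{\omega}=\Omega$, use \Cref{L4.8} and \Cref{L4.9} to assemble the complete square $|\mathrm{II}^t-\mathrm{II}^\Sigma-S^\Sigma|^2$ with the curvature terms cancelling, add $K$ times the $C^0$ inequality \eqref{c0}, and absorb the $(s^2+\psi)$-weighted quadratic errors into the order-one complete-square coefficient before invoking the maximum principle. The only cosmetic difference is that you argue at a spatial maximum of $1-(*\Omega)+K\psi$, while the paper derives the global differential inequality with the extra decay term $-c_{12}(1-*\Omega+K\psi)$; these are equivalent for preserving \eqref{CC}.
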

\begin{proof}

Let $*\Omega=\Omega(\Tilde{e}_1,\Tilde{e}_2)$. By \Cref{P2.11}, we get

\begin{align}
      \frac{\partial}{\partial t} (*\Omega)&\nonumber=\lambda^2[\Delta(*\Omega)+*\Omega(|\mathrm{II}^{t}|^2)+(\Tilde{\Omega}_{32}\overline{R}_{\tilde{1}\tilde{k}\tilde{k}\tilde{3}}+\Tilde{\Omega}_{42}\overline{R}_{\tilde{1}\tilde{k}\tilde{k}\tilde{4}}+\Tilde{\Omega}_{13}\overline{R}_{\tilde{2}\tilde{k}\tilde{k}\tilde{3}}+\Tilde{\Omega}_{14}\overline{R}_{\tilde{2}\tilde{k}\tilde{k}\tilde{4}})]
    \\&\kern1em\nonumber+2\lambda[\Tilde{e}_1(\lambda)(\Tilde{\Omega}_{32}\Tilde{H}_3+\Tilde{\Omega}_{42}\Tilde{H}_4)+\Tilde{e}_2(\lambda)(\Tilde{\Omega}_{13}\Tilde{H}_3+\Tilde{\Omega}_{14}\Tilde{H}_4)]
    \\&\kern1em\nonumber+\lambda \tilde{e}_i(\lambda)[\Tilde{h}_{31i}\Tilde{\Omega}_{32}+\Tilde{h}_{41i}\Tilde{\Omega}_{42}+\Tilde{h}_{32i}\Tilde{\Omega}_{13}+\Tilde{h}_{42i}\Tilde{\Omega}_{14}]-2\lambda^2\Tilde{\Omega}_{34}(\Tilde{h}_{3k1}\Tilde{h}_{4k2}-\Tilde{h}_{3k2}\Tilde{h}_{4k1})
    \\&\kern1em+(\overline{\nabla}_{\lambda\nabla\lambda}\Tilde{\Omega})(\Tilde{e}_1,\Tilde{e}_2)-\sum_{k=1,2}\lambda^2(\overline{\nabla}^2_{\tilde{e}_k,\tilde{e}_k}\overline{\Omega})(\tilde{e}_1,\tilde{e}_2)-2\sum_{\alpha=3,4}\lambda^2\Tilde{\Omega}_{\alpha 2,k}\Tilde{h}_{\alpha k1}-2\sum_{k=1,2}\sum_{\alpha=3,4}\lambda^2\Tilde{\Omega}_{1\alpha ,k}\Tilde{h}_{\alpha k2},  
\end{align}
where $\Tilde{\Omega}_{AB}=\Omega(\Tilde{e}_A,\Tilde{e}_B)$ and $\Tilde{h}_{\alpha ij}=\overline{g}(\Tilde{e}_\alpha,\overline{\nabla}_{\Tilde{e}_i}\Tilde{e}_j)$. Note that
\begin{align*}
  \Omega(\Tilde{e}_A,\Tilde{e}_B)=
   \begin{pmatrix}
   0 & \cos\theta_1\cos\theta_2 & 0 &-\cos\theta_1\sin\theta_2 \\
   -\cos\theta_1\cos\theta_2 & 0 & \cos\theta_2\sin\theta_1 & 0 \\
   0 & -\cos\theta_2\sin\theta_1 & 0 & \sin\theta_1\sin\theta_2 \\
   \cos\theta_1\sin\theta_2 & 0 & -\sin\theta_1\sin\theta_2 & 0
   \end{pmatrix},
\end{align*}
we derive the following by (\ref{ei}) and \Cref{L4.8}.
\begin{enumerate}
    \item [(a)]
\begin{align*}
    \Big|2\lambda[\Tilde{e}_1(\lambda)(\Tilde{\Omega}_{32}\Tilde{H}_3+\Tilde{\Omega}_{42}\Tilde{H}_4)+\Tilde{e}_2(\lambda)(\Tilde{\Omega}_{13}\Tilde{H}_3+\Tilde{\Omega}_{14}\Tilde{H}_4)]\Big|\leq c_1\lambda^2s^2|\mathrm{II}^t|^2,
\end{align*}
\item[(b)]
\begin{align*}
    \Big|\lambda \tilde{e}_i(\lambda)[\Tilde{h}_{31i}\Tilde{\Omega}_{32}+\Tilde{h}_{41i}\Tilde{\Omega}_{42}+\Tilde{h}_{32i}\Tilde{\Omega}_{13}+\Tilde{h}_{42i}\Tilde{\Omega}_{14}]\Big|\leq c_2\lambda^2s^2|\mathrm{II}^t|^2,
\end{align*}
\item [(c)]
\begin{align*}
\Big|2\lambda^2\Tilde{\Omega}_{34}(\Tilde{h}_{3k1}\Tilde{h}_{4k2}-\Tilde{h}_{3k2}\Tilde{h}_{4k1})\Big|\leq c_3\lambda^2s^2|\mathrm{II}^t|^2,
\end{align*}
\item [(d)]
\begin{align*}
    \Big|(\overline{\nabla}_{\lambda\nabla\lambda}\Tilde{\Omega})(\Tilde{e}_1,\Tilde{e}_2)\Big|\leq c_4\lambda^2(s^2+\psi)|\mathrm{II}^t|.
\end{align*}
\end{enumerate}    
Apply \Cref{L4.8} and \Cref{L4.9}, we conclude that
\begin{align}\label{so}
    \nonumber\frac{\partial}{\partial t}(*\Omega)&\nonumber  \geq \lambda^2\Big[\Delta(*\Omega)+*\Omega(|\mathrm{II}^{t}|^2)-2\langle \mathrm{II}^t,\mathrm{II}^\Sigma+S^\Sigma \rangle(*\Omega)+|\mathrm{II}^\Sigma+S^\Sigma|^2(*\Omega)-c_5(s^2+\psi)(|\mathrm{II}^t|^2+|\mathrm{II}^t|+1)\Big]
    \\&\geq \lambda^2\Big[\Delta(*\Omega)+(*\Omega)|\mathrm{II}^{t}-\mathrm{II}^{\Sigma}-S^\Sigma|^2-c_6(s^2+\psi)|\mathrm{II}^t|^2-c_6(s^2+\psi)\Big].
\end{align}
From $C^0$ estimate (\ref{c0}), we have 
\begin{align*}
    \frac{\partial}{\partial t}(\psi)\leq \lambda^2\Big[\Delta(\psi)-c_7(s^2+\psi)+c_8s(s^2+\psi)(|\mathrm{II}^t|^2+1)\Big].
\end{align*}
Then, 
\begin{align*}
    \frac{\partial}{\partial t}(*\Omega-K\psi)&\nonumber\geq \lambda^2\Big[\Delta(*\Omega-K\psi)+(*\Omega)|\mathrm{II}^{t}-\mathrm{II}^{\Sigma}-S^\Sigma|^2
     \\&\kern1em+(Kc_8s-c_6)(s^2+\psi)|\mathrm{II}^t|^2+(Kc_7-c_6-Kc_8s)(s^2+\psi)\Big],
\end{align*}
where $K$ is a constant which will be determined later. Now, we take $\epsilon>0$ such that $ \sup_{q\in\Gamma}(1-(*\Omega)+K\psi)<\kappa<\epsilon^2$ then $*\Omega>1-\epsilon^2$, $\psi<\epsilon^2$. Also, 
\begin{align*}
    \cos\theta_i\geq \cos\theta_1\cos\theta_2> 1-\epsilon^2 \Longrightarrow s\leq \sqrt{2}\epsilon.
\end{align*}
We can pick our $K$ satisfying $Ks<\sqrt{\epsilon}$ so that 
\begin{align*}
    Kc_8s-c_6<0 \quad \textrm{ and } \quad Kc_7-c_6-Kc_8s>0.
\end{align*}
Thus,
\begin{align*}
     \frac{\partial}{\partial t}(*\Omega-K\psi)&\nonumber\geq \lambda^2\Big[\Delta(*\Omega-K\psi)+(*\Omega)(|\mathrm{II}^{t}-\mathrm{II}^{\Sigma}|^2-|S^\Sigma|^2)
     \\&\kern1em+(Kc_8s-c_6)(s^2+\psi)(|\mathrm{II}^t-\mathrm{II}^{\Sigma}|^2+|\mathrm{II}^{\Sigma}|^2)+(Kc_7-c_6-c_8Ks)(s^2+\psi)\Big].
\end{align*}
Note that $|S^\Sigma|^2\leq c_9\psi$ and $|\mathrm{II}^{\Sigma}|^2\leq c_{10}$. If we pick $\epsilon$ small enough, we derive 
\begin{align*}
    \frac{\partial}{\partial t}(*\Omega-K\psi)&\nonumber\geq \lambda^2\Big[\Delta(*\Omega-K\psi)+\big( *\Omega+(Kc_8s-c_6)(s^2+\psi)\big)|\mathrm{II}^{t}-\mathrm{II}^{\Sigma}|^2
     \\&+\big(Kc_7-c_6-Kc_8s-c_9+c_{10}(Kc_8s-c_6)\big)(s^2+\psi)\Big].
\end{align*}
Finally, we choose $K$ large enough so that  
\begin{align*}
    \frac{\partial}{\partial t}(*\Omega-K\psi)\geq \lambda^2\Big[\Delta(*\Omega-K\psi)+\frac{1}{2}\big((*\Omega)-K\psi\big)|\mathrm{II}^{t}-\mathrm{II}^{\Sigma}|^2\Big]+c_{11}(s^2+K\psi).
\end{align*}
We remark that 
\begin{align*}
    1-*\Omega\leq 1-(*\Omega)^2=\sin^2\theta_1+\sin^2\theta_2-\sin^2\theta_1\sin^2\theta_2\leq 2s^2.
\end{align*}
Thus,
\begin{align*}
    \frac{\partial}{\partial t}(1-*\Omega+K\psi)\leq \lambda^2\Big[\Delta(1-*\Omega+K\psi)-\frac{1}{2}((*\Omega)-K\psi)|\mathrm{II}^{t}-\mathrm{II}^{\Sigma}|^2\Big]-c_{12}(1-*\Omega+K\psi).
\end{align*}
By the maximum principle, $1-*\Omega-K\psi$ is non-decreasing. We then complete the proof.
\end{proof}

\subsection{Long time existence}
\begin{lemma}
The special $H$-flow $\Gamma^t$ with $\Gamma^0=\Gamma$ exists all time.
\end{lemma}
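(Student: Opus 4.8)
The plan is to deduce long-time existence from the blow-up criterion of Section~2: for the special $H$-flow the function $\lambda$ is bounded and, by (\ref{Lam}), its derivatives are controlled by $|A|$, so the Corollary following that theorem asserts that if the maximal existence time $T$ is finite then $\sup_{\Gamma^t}|A|^2\to\infty$ as $t\to T$. Hence it suffices to show that $|\mathrm{II}^t|^2$ stays uniformly bounded on $[0,T)$. Throughout, \Cref{P4.12} guarantees that the $C^1$-closeness condition (\ref{CC}) is preserved, so $\Gamma^t$ remains inside $U_\epsilon$, the quantity $u:=*\Omega-K\psi$ stays pinched, $1-\kappa<u\le 1$, and $s^2,\psi$ stay small along the flow.

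The heart of the argument is a maximum-principle estimate for
$$F=\frac{|\mathrm{II}^t|^2}{(*\Omega-K\psi)^{p}}$$
with $p$ a large constant to be chosen. First I would extract from the evolution formula (\ref{A}) an inequality for $W:=|\mathrm{II}^t|^2$: since $\lambda,\eta_1$ and all ambient curvature terms are bounded, and since in the special flow the first-order-in-$\nabla\mathrm{II}$ terms $-2\lambda\eta_1 H_l h_{ijk,l}$ and $-12\lambda\eta_1 h_{ijk}H_kH_{i,j}$ carry factors of $H$ (hence of $\mathrm{II}$) and can be absorbed into the good term $-2|\nabla\mathrm{II}|^2$ by Young's inequality, one obtains
$$\Big(\frac{\partial}{\partial t}-\lambda^2\Delta\Big)W\le\lambda^2\big(C'W^2+C''W+C'''\big)$$
for geometric constants $C',C'',C'''$. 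Second, \Cref{P4.12} supplies the complementary supersolution bound $(\partial_t-\lambda^2\Delta)u\ge\tfrac12\lambda^2 u\,|\mathrm{II}^t-\mathrm{II}^\Sigma|^2$, and since $\mathrm{II}^\Sigma$ is bounded we have $|\mathrm{II}^t-\mathrm{II}^\Sigma|^2\ge\tfrac12 W-C$, so $u$ grows with $W$.

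Combining the two through the quotient rule, the term $\tfrac12\lambda^2 u\,|\mathrm{II}^t-\mathrm{II}^\Sigma|^2$ feeds into $-pWu^{-p-1}(\partial_t-\lambda^2\Delta)u$ a negative reaction of size $-\tfrac{p}{4}\lambda^2 W^2u^{-p}$, which for $p$ large beats the bad quartic term $C'\lambda^2 W^2u^{-p}$. The cross-gradient terms reorganize into a transport term $2p\lambda^2u^{-1}\langle\nabla u,\nabla F\rangle$ (which vanishes at a spatial maximum of $F$) plus $p(p-1)\lambda^2 Wu^{-p-2}|\nabla u|^2$; by \Cref{L4.8} the latter is $O(s^2)\,W^2+\text{lower order}$, so after shrinking $\epsilon$ (equivalently $\kappa$) its quadratic part is dominated by the good term. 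Using $1-\kappa<u\le 1$ to turn powers of $u$ into constants and $W^2u^{-p}\ge(1-\kappa)^p F^2$, I arrive at
$$\Big(\frac{\partial}{\partial t}-\lambda^2\Delta\Big)F\le 2p\lambda^2u^{-1}\langle\nabla u,\nabla F\rangle+\lambda^2\big(-c_1F^2+c_2F+c_3\big)$$
with $c_1>0$. Evaluating at the maximum of $F$ and comparing with the ODE $y'=\lambda^2(-c_1y^2+c_2y+c_3)$ shows $F$ stays bounded by $\max\{\,F|_{t=0},\ \text{the positive root of }c_1y^2=c_2y+c_3\,\}$; since $u\le 1$ this bounds $|\mathrm{II}^t|^2=Fu^{p}$ uniformly, and the blow-up criterion forces $T=\infty$.

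I expect the main obstacle to be the first step: teasing out of the long formula (\ref{A}) the precise structure $-2|\nabla\mathrm{II}|^2+C'W^2+\text{lower order}$, checking that every first-order-in-$\nabla\mathrm{II}$ term can indeed be absorbed with room to spare inside the coefficient $2$, and verifying that the resulting $C'$ is finite and time-independent so that a single large $p$ works uniformly on $[0,T)$. A secondary point requiring care is the interplay of the two parameters: $p$ must be fixed first (large, from $C'$), and only then $\epsilon$ shrunk to control the $|\nabla u|^2$ contribution, so one must confirm this ordering is consistent with the hypotheses of \Cref{P4.12}.
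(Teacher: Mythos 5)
Your proposal is correct and follows essentially the same route as the paper: the paper likewise reduces to a uniform bound on $|\mathrm{II}^t|^2$ via the blow-up criterion, and bounds it by applying the maximum principle to the quotient $\eta^{-1}|\mathrm{II}^t|^2$ with $\eta=(*\Omega)^p-K'\psi$, where the $p$-amplified reaction term from \Cref{P4.12} beats the quartic nonlinearity in (\ref{A}) for $p$ large, exactly as in your $F=|\mathrm{II}^t|^2/(*\Omega-K\psi)^p$. The only cosmetic difference is the form of the denominator ($(*\Omega)^p-K'\psi$ versus $(*\Omega-K\psi)^p$), and your ordering of the parameters ($p$ first, then $\epsilon$) matches what the paper implicitly does.
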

\begin{proof}
Recall that in the special $H$-flow, we have (\ref{A}) 
\begin{align*}
    \frac{\partial}{\partial t}|\mathrm{II}^t|^2\leq \lambda^2\Big[\Delta|\mathrm{II}^t|^2+c(|\mathrm{II}^t|^4+1)\Big].
\end{align*}    
We try to use the maximum principle to prove that $|\mathrm{II}^t|$ is uniformly bounded. In order to do that we consider a constant $p>1$, then it follows from (\ref{so}) that
\begin{align*}
    \frac{\partial}{\partial t}(*\Omega)^p&\nonumber =p(*\Omega)^{p-1}\frac{\partial}{\partial t}(*\Omega)
    \\&\geq  p(*\Omega)^{p-1} \lambda^2\Big[\Delta(*\Omega)+(*\Omega)|\mathrm{II}^{t}-\mathrm{II}^{\Sigma}-S^\Sigma|^2-c_1(s^2+\psi)|\mathrm{II}^t|^2-c_1(s^2+\psi)\Big]
    \\&\geq \lambda^2\Big[\Delta(*\Omega)^p-p(p-1)(*\Omega)^{p-2}|\nabla(*\Omega)|^2+p(*\Omega)^p|\mathrm{II}^{t}-\mathrm{II}^{\Sigma}-S^\Sigma|^2-c_1p(s^2+\psi)|\mathrm{II}^t|^2-c_1p(s^2+\psi)\Big].
\end{align*}
Here, we note that $\Delta(*\Omega)^p=p(*\Omega)^{p-1}\Delta(*\Omega)+p(p-1)(*\Omega)^{p-2}|\nabla(*\Omega)|^2$. Using \Cref{L4.8}, we derive that
\begin{align*}
    \frac{\partial}{\partial t}(*\Omega)^p&\geq \lambda^2\Big[\Delta(*\Omega)^p+p(*\Omega)^p|\mathrm{II}^{t}-\mathrm{II}^{\Sigma}-S^\Sigma|^2-c_2p^2s^2 |\mathrm{II}^\Gamma-\mathrm{II}^\Sigma|^2-c_1p(s^2+\psi)|\mathrm{II}^t|^2-c_3p^2(s^2+\psi)\Big].
\end{align*}
Again, using (\ref{c0}),
\begin{align*}
    \frac{\partial}{\partial t}(\psi)\leq \lambda^2\Big[\Delta(\psi)+c_4s(s^2+\psi)|\mathrm{II}^t|^2-c_5(s^2+\psi)\Big] ,
\end{align*}
we compute that
\begin{align*}
    \frac{\partial}{\partial t}((*\Omega)^p-K'\psi)&\nonumber\geq \lambda^2\Big[\Delta((*\Omega)^p-K'\psi)+p(*\Omega)^p|\mathrm{II}^{t}-\mathrm{II}^{\Sigma}-S^\Sigma|^2-c_2p^2s^2 |\mathrm{II}^\Gamma-\mathrm{II}^\Sigma|^2
    \\&\kern1em\nonumber-(c_1p+c_4K's)(s^2+\psi)|\mathrm{II}^t|^2+(c_5K'-c_3p^2)(s^2+\psi)\Big]
    \\&\nonumber\geq \lambda^2\Big[\Delta((*\Omega)^p-K'\psi)+(p(*\Omega)^p-c_2p^2s^2-(c_1p+c_4K's)(s^2+\psi))|\mathrm{II}^t-\mathrm{II}^\Sigma|^2
    \\&\kern1em+(c_5K'-c_3p^2-c_6p-c_1c_7p-c_4c_7K's)(s^2+\psi)\Big],
\end{align*}
where $|S^\Sigma|^2\leq c_6\psi$ and $|\mathrm{II}^\Sigma|\leq c_7$. We choose $p,K'$ and $s$ so that 
\begin{align}\label{OP}
     \frac{\partial}{\partial t}((*\Omega)^p-K'\psi)\geq \lambda^2\Big[\Delta ((*\Omega)^p-K'\psi)+\frac{p}{3}((*\Omega)^p-K'\psi)|\mathrm{II}^{t}-\mathrm{II}^{\Sigma}|^2\Big].
\end{align}
 Take $\epsilon$ small enough so that $((*\Omega)^p-K'\psi)>\frac{1}{2}$ at $\Gamma$ then it is non-decreasing by maximum principle.

Define $\eta=\big((*\Omega)^p-K'\psi\big)$, then 
\begin{align*}
     \frac{\partial}{\partial t}(\eta^{-1}|\mathrm{II}^t|^2)&\nonumber=\eta^{-1}\frac{\partial}{\partial t}(|\mathrm{II}^t|^2)-\eta^{-2}\frac{\partial}{\partial t}(\eta)|\mathrm{II}^t|^2
     \\&\leq \lambda^2\Big[\eta^{-1}\big(\Delta|\mathrm{II}^t|^2+c_8|\mathrm{II}^t|^4+c_8\big)-\eta^{-2}|\mathrm{II}^t|^2\big(\Delta (\eta)+\frac{p}{3}\eta|\mathrm{II}^{t}-\mathrm{II}^{\Sigma}|^2\big)\Big].
\end{align*}
Note that 
\begin{align*}
    \Delta(\eta^{-1}|\mathrm{II}^t|^2)&\nonumber=\Delta(\eta^{-1})|\mathrm{II}^t|^2+\eta^{-1}\Delta(|\mathrm{II}^t|^2)+2\big\langle \nabla(\eta^{-1}),\nabla(|\mathrm{II}^t|^2) \big\rangle
    \\&=-\eta^{-2}|\mathrm{II}^t|^2\Delta\eta+\eta^{-1}\Delta(|\mathrm{II}^t|^2)-2\eta^{-1}\big\langle \nabla(\eta),\nabla(\eta^{-1}|\mathrm{II}^t|^2) \big\rangle,
\end{align*}
so
\begin{align*}
     \frac{\partial}{\partial t}(\eta^{-1}|\mathrm{II}^t|^2)\leq \lambda^2\Big[ \Delta(\eta^{-1}|\mathrm{II}^t|^2)+2\eta^{-1}\big\langle \nabla(\eta),\nabla(\eta^{-1}|\mathrm{II}^t|^2) \big\rangle+c_8\eta^{-1}(|\mathrm{II}^t|^4+1)-\eta^{-1}\frac{p}{3}|\mathrm{II}^{t}-\mathrm{II}^{\Sigma}|^2|\mathrm{II}^t|^2\Big]
\end{align*}
Finally, since
\begin{align*}
    |\mathrm{II}^{t}-\mathrm{II}^{\Sigma}|^2\geq |\mathrm{II}^t|^2-|\mathrm{II}^{\Sigma}|^2\geq |\mathrm{II}^t|^2-c_9,
\end{align*}
we can choose $p$ large enough so that $\frac{p}{3}>c_7$ then
\begin{align*}
    \frac{\partial}{\partial t}(\eta^{-1}|\mathrm{II}^t|^2)\leq \lambda^2\Big[ \Delta(\eta^{-1}|\mathrm{II}^t|^2)+2\eta^{-1}\langle \nabla(\eta),\nabla(\eta^{-1}|\mathrm{II}^t|^2) \rangle+(c_8-\frac{p}{3})\eta^{-1}|\mathrm{II}^t|^4+c_9\eta^{-1}\frac{p}{3}|\mathrm{II}^{t}|^2+c_8\eta^{-1}\Big].
\end{align*}
By the maximum principle, $\eta^{-1}|\mathrm{II}^t|^2$ is uniformly bounded, so is $|\mathrm{II}^t|^2$.
\end{proof}

\subsection{Proof of Main Result}
Now, we are ready to prove the main result. 

\begin{theorem}
Let $(M,\overline{g},\overline{\nabla},I,J,K)$ be a $\textrm{hyperk}\Ddot{a}\textrm{hler}$ $4$-manifold, $\Sigma\subset M$ be a compact, oriented, strongly stable complex Lagrangian surface with respect to $I,K$ ,i.e., $\omega_I+i\omega_K\equiv 0$ on $\Sigma$ and $\Gamma\subset M$ be a Lagranigan surface with respect to $K$ which is $C^1$ close to $\Sigma$. Consider 
\begin{align*}
    \mathcal{N}=\{f:\Gamma\to M| f^\star(\omega_J)=\rho, f^\star(\omega_K)=0 \textrm{ where $\rho$ is a given volume form on $\Gamma$}\}
\end{align*}
the special H-flow $f_t(\Gamma)=\Gamma^t\in \mathcal{N}$ with $f_0(\Gamma)=\Gamma$ exists for all time and converge to $\Sigma$ smoothly.
\end{theorem}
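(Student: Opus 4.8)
The plan is to assemble the a priori estimates already established into all-time existence, and then to promote the preserved $C^1$-closeness of \Cref{P4.12} into genuine smooth convergence to $\Sigma$. All-time existence is furnished by the preceding lemma, whose proof bounds $\sup_{\Gamma^t}|\mathrm{II}^t|^2$ uniformly; combined with the corollary stating that a finite-time singularity of the special $H$-flow would force $\sup_{\Gamma^t}|A|^2\to\infty$, this rules out $T<\infty$, so the flow exists on $[0,\infty)$. Moreover $\lambda$ is bounded, being non-increasing along the flow, and by (\ref{Lam}) the quantities $|\overline{\nabla}^k\lambda|$ are controlled by covariant derivatives of the second fundamental form; hence \Cref{L 2.14} upgrades the uniform bound on $|\mathrm{II}^t|$ to uniform bounds $\sup_{\Gamma^t}|\nabla^kA|\le\alpha_k$ for every $k$.

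The heart of the argument is exponential decay. Set $\Phi=1-(*\Omega)+K\psi\ge 0$ with $K$ chosen as in \Cref{P4.12}. That proposition gives
\begin{align*}
    \frac{\partial}{\partial t}\Phi\leq \lambda^2\Big[\Delta\Phi-\tfrac{1}{2}\big((*\Omega)-K\psi\big)|\mathrm{II}^{t}-\mathrm{II}^{\Sigma}|^2\Big]-c_{12}\Phi.
\end{align*}
Since the $C^1$-closeness keeps $(*\Omega)-K\psi>\tfrac12>0$, the bracketed reaction term is nonpositive, so $\frac{\partial}{\partial t}\Phi\le \lambda^2\Delta\Phi-c_{12}\Phi$. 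At a point where $\Phi$ attains its spatial maximum one has $\Delta\Phi\le 0$, hence $\frac{d}{dt}\big(\max_{\Gamma^t}\Phi\big)\le -c_{12}\max_{\Gamma^t}\Phi$ and therefore
\begin{align*}
    \max_{\Gamma^t}\Phi\le \Big(\max_{\Gamma^0}\Phi\Big)e^{-c_{12}t}.
\end{align*}
Because $1-(*\Omega)\ge 0$ and $K\psi\ge 0$ both contribute to $\Phi$, each decays exponentially; using $1-(*\Omega)=1-\cos\theta_1\cos\theta_2\ge\tfrac12 s^2$ we obtain that $s\to 0$ and $\psi\to 0$ exponentially. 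This is exactly $C^1$ exponential convergence of $\Gamma^t$ to $\Sigma$: the square distance $\psi$ and the tilt $s$ both tend to zero.

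To conclude I would promote this $C^1$ convergence to smooth convergence. In the tubular neighborhood write $\Gamma^t$ as a graph $y^\alpha=u^\alpha(x,t)$ over the zero section $\Sigma$, so that $\psi=|u|^2$ controls $\|u\|_{C^0}$ while $s$ controls $\|\nabla u\|_{C^0}$; thus $\|u(\cdot,t)\|_{C^1}\to 0$ exponentially. The uniform bounds on $|\nabla^kA|$ from \Cref{L 2.14}, transported through the frame expansions of \Cref{L4.4} and \Cref{L4.5}, yield uniform $C^k$ bounds on $u(\cdot,t)$ for every $k$. A Gagliardo--Nirenberg interpolation of the schematic form $\|\nabla^ju\|_{C^0}\le C\,\|u\|_{C^0}^{1-j/k}\|u\|_{C^k}^{j/k}$ then forces $\|\nabla^ju(\cdot,t)\|_{C^0}\to 0$ for each fixed $j$ (letting $k\to\infty$), so $u\to 0$ in $C^\infty$ and $\Gamma^t\to\Sigma$ smoothly.

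I expect the main obstacle to lie in this final step: translating the intrinsic decay of $\psi$ and $s$ together with the intrinsic bounds on $\nabla^kA$ into estimates for the graph function $u$ in a fixed chart over $\Sigma$, and running the interpolation cleanly, while simultaneously verifying that the $C^1$-closeness hypothesis genuinely persists so the flow never leaves $U_\epsilon$, the region in which every estimate above is valid. This self-consistent bootstrap---the estimates hold because the flow stays close, and the flow stays close because of the estimates---is underwritten by the maximum-principle arguments of \Cref{P4.11} and \Cref{P4.12}, but must be invoked with care.
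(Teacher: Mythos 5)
Your proposal is correct, and it coincides with the paper through the long-time existence step and the exponential decay of $1-(*\Omega)+K\psi$: the paper likewise reads off $\psi\le\sup\psi|_{t=0}\,e^{-c_1t}$ from \Cref{P4.11} and $(1-*\Omega+K\psi)\le \sup(1-*\Omega+K\psi)|_{t=0}\,e^{-c_2t}$ from \Cref{P4.12}. Where you genuinely diverge is in upgrading this to smooth convergence. The paper never passes to a graph representation: it works with $\eta=(*\Omega)^p-K'\psi$ and the differential inequality (\ref{OP}), integrates over $\Gamma_t$ using the monotonicity of $\mathrm{Vol}(\Gamma_t)$ and the exponential decay of $|\nabla\eta|$ (via \Cref{L4.8}) to prove $\int_0^\infty\int_{\Gamma_t}|\mathrm{II}^t-\mathrm{II}^\Sigma|^2\,d\mu_t\,dt<\infty$ (Claim 1), shows that $\frac{d}{dt}\int_{\Gamma_t}|\mathrm{II}^t-\mathrm{II}^\Sigma|^2\,d\mu_t$ is bounded above (Claim 2), and then invokes Lemma 6.3 of \cite{WT2} to conclude $\int_{\Gamma_t}|\mathrm{II}^t-\mathrm{II}^\Sigma|^2\,d\mu_t\to 0$ before finishing with a ``standard argument.'' Your route --- write $\Gamma^t$ as a graph $u$ over $\Sigma$, use $\psi=|u|^2\to 0$ and $s^2\le 2(1-*\Omega)\to 0$ for exponential $C^1$ decay of $u$, convert the uniform bounds $|\nabla^kA|\le\alpha_k$ of \Cref{L 2.14} into uniform $C^k$ bounds on $u$, and interpolate --- is more elementary (it avoids the $L^2$-in-space, $L^1$-in-time bookkeeping and the external real-analysis lemma) and yields decay rates for every derivative; its cost is the routine but nontrivial verification that curvature bounds translate into graph bounds, which needs $s$ small so that the induced metric in graph coordinates remains uniformly equivalent to that of $\Sigma$ --- exactly what \Cref{P4.12} guarantees. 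Two minor cautions: for fixed $j$ the interpolation $\|\nabla^ju\|_{C^0}\le C_{j,k}\|u\|_{C^0}^{1-j/k}\|u\|_{C^k}^{j/k}$ should be applied with one fixed $k>j$ rather than by ``letting $k\to\infty$'' (both $C_{j,k}$ and $\|u\|_{C^k}$ may grow with $k$); and the hypotheses of \Cref{L 2.14} on $|\overline{\nabla}^k\lambda|$ must be closed through (\ref{Lam}) together with the bound $\lambda\ge 1$ forced by $\eta_2=1/\lambda\le 1$ on the special flow, which is the same induction the paper relies on.
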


\begin{proof}
 Due to long time existence, we know that the special $H$-flow exists for all time. From $C^0$ estimate (\Cref{P4.11}), we see that 
 \begin{align*}
\frac{\partial}{\partial t}(\psi)\leq \lambda^2(\Delta^{\Gamma_t}(\psi)+\langle \nabla(\frac{\lambda^2}{2}),\nabla\psi \rangle-c_1\psi. 
\end{align*}    
Thus, $\psi(x,t)\leq \sup\psi|_{t=0} e^{-c_1 t}$ ,i.e., it converges to 0 exponentially. Also, from $C^1$ estimate (\Cref{P4.12}), we have
\begin{align*}
    \frac{\partial}{\partial t}(1-*\Omega+K\psi)\leq \lambda^2\Big[\Delta(1-*\Omega+K\psi)-\frac{1}{2}((*\Omega)-K\psi)|\mathrm{II}^{t}-\mathrm{II}^{\Sigma}|^2\Big]-c_{2}(1-*\Omega+K\psi),
\end{align*}
then $(1-*\Omega+K\psi)(x,t)\leq \sup(1-*\Omega+K\psi)|_{t=0} e^{-c_2 t}$ ,i.e., $(*\Omega)$ converge to 1 exponentially by $C^1$ estimate.

For $C^2$ convergence, we need to use the following
\begin{align} \label{KE}
    &\nonumber \frac{\partial}{\partial t}\eta \geq \lambda^2[\Delta\eta+\frac{p}{3}\eta |\mathrm{II}^{t}-\mathrm{II}^{\Sigma}|^2 ],
    \\& \frac{\partial}{\partial t}|\mathrm{II}^{t}-\mathrm{II}^{\Sigma}|^2\leq \lambda^2\Big[\Delta|\mathrm{II}^{t}-\mathrm{II}^{\Sigma}|^2+c(|\mathrm{II}^{t}-\mathrm{II}^{\Sigma}|^4+1)\Big],
\end{align}
where the first inequality is (\ref{OP}) and the second one is followed by (\ref{A}). 

\textbf{Claim 1:} $    \int_0^\infty (\int_{\Gamma_t}|\mathrm{II}^{t}-\mathrm{II}^{\Sigma}|^2 d\mu_t)dt\leq C_1$ for some positive constant $C_1$.
\\
\\First see that 
\begin{align*}
    \frac{\partial}{\partial t}\textrm{Vol}(\Gamma_t)=\int_{\Gamma_t}\frac{\partial}{\partial t}d\mu_t=\int_{\Gamma_t} \left( \Delta ( \frac{\lambda^2}{2}
) - \lambda^2 |H|^2 \right) d\mu_t=\int_{\Gamma_t} - \lambda^2 |H|^2  d\mu_t\leq 0.
\end{align*}
Thus, $\textrm{Vol}(\Gamma_t)$ is non-increasing and has a lower bound. Then, the limit exists. Next, we integrate (\ref{KE}) and note that $\eta$ has a lower bound and converges to 1 uniformly.
\begin{align*}
    \int_{\Gamma_t}\frac{\partial}{\partial t}\eta d\mu_t\geq  \int_{\Gamma_t} \lambda^2\Delta\eta d\mu_t+ c_3 \int_{\Gamma_t}|\mathrm{II}^{t}-\mathrm{II}^{\Sigma}|^2 d\mu_t.
\end{align*}
Rewrite the left hand side  
\begin{align*}
\int_{\Gamma_t}\frac{\partial}{\partial t}\eta d\mu_t&= \frac{\partial}{\partial t}  \int_{\Gamma_t}\eta d\mu_t-\int_{\Gamma_t} \eta(\frac{\partial}{\partial t} d\mu_t)
\\&=\frac{\partial}{\partial t}  \int_{\Gamma_t}\eta d\mu_t-\int_{\Gamma_t} \eta \left( \Delta ( \frac{\lambda^2}{2}
) - \lambda^2 |H|^2 \right) d\mu_t
\\&=\frac{\partial}{\partial t}  \int_{\Gamma_t}\eta d\mu_t+\int_{\Gamma_t}\frac{1}{2}\langle \nabla(\lambda^2),\nabla\eta\rangle d\mu_t+\int_{\Gamma_t}\eta\lambda^2|H|^2 d\mu_t,
\end{align*}
where we use the Stoke's theorem. Thus
\begin{align}\label{2T}
   \frac{\partial}{\partial t}  \int_{\Gamma_t}\eta d\mu_t+\int_{\Gamma_t}\frac{3}{2}\langle \nabla(\lambda^2),\nabla\eta\rangle d\mu_t+\int_{\Gamma_t}\eta\lambda^2|H|^2 d\mu_t\geq  c_3 \int_{\Gamma_t}|\mathrm{II}^{t}-\mathrm{II}^{\Sigma}|^2 d\mu_t.
\end{align}
Recall that $|\nabla(\lambda^2)|$ is uniformly bounded (since $|\mathrm{II}^{t}|$ is uniformly bounded) and $|\nabla\eta|^2\leq c_4(|\nabla(*\Omega)|^2+|\nabla\psi|^2)$. It can be seen from \Cref{L4.8} and the proof in $C^0$ estimate such that  
\begin{align*}
   |\nabla\eta|^2\leq c_5(s^2+\psi)\leq c_6(1-*\Omega+K\psi)\leq c_7e^{-c_8 t}.
\end{align*}
Now, we can finish the proof of claim 1. The first term is
\begin{align*}
    \int_0^t \frac{\partial}{\partial s}  \int_{\Gamma_s}\eta d\mu_s ds=\int_{\Gamma_t}\eta d\mu_t-\int_{\Gamma_0}\eta d\mu_0.
\end{align*}
Since $\eta$ converges to 1 uniformly, it converges. The second term is 
\begin{align*}
   \int_0^t \int_{\Gamma_s}\frac{3}{2}\langle \nabla(\lambda^2),\nabla\eta\rangle d\mu_s ds\leq \int_0^t \int_{\Gamma_s}\frac{3}{2}(|\nabla(\lambda^2)||\nabla\eta|) d\mu_s ds\leq \int_0^t c_7e^{-c_8 s}\textrm{Vol}(\Gamma_0) ds<\infty.
\end{align*}
The last term is 
\begin{align*}
   \int_0^t \int_{\Gamma_s}\eta\lambda^2|H|^2 d\mu_s ds \leq \int_0^t \int_{\Gamma_s}\lambda^2|H|^2 d\mu_s ds=\int_0^t -\frac{\partial}{\partial s}\int_{\Gamma_s}d\mu_s ds=\textrm{Vol}(\Gamma_0)-\textrm{Vol}(\Gamma_t).
\end{align*}
The proof ofr claim 1 is followed by the inequality (\ref{2T}).

\textbf{Claim 2:} $\frac{d}{dt}\int_{\Gamma_t}|\mathrm{II}^{t}-\mathrm{II}^{\Sigma}|^2 d\mu_t\leq C_2$ for some positive constant $C_2$.
\\By (\ref{KE}), we get
\begin{align*}
    \frac{\partial}{\partial t}|\mathrm{II}^{t}-\mathrm{II}^{\Sigma}|^2\leq \lambda^2\Delta|\mathrm{II}^{t}-\mathrm{II}^{\Sigma}|^2+c_9.
\end{align*}
Since $|\mathrm{II}^{t}|$ is uniformly bounded, we compute 
\begin{align*}
    \frac{d}{dt}\int_{\Gamma_t}|\mathrm{II}^{t}-\mathrm{II}^{\Sigma}|^2 d\mu_t&=\int_{\Gamma_t}\frac{\partial}{\partial t}|\mathrm{II}^{t}-\mathrm{II}^{\Sigma}|^2 d\mu_t+\int_{\Gamma_t}|\mathrm{II}^{t}-\mathrm{II}^{\Sigma}|^2(\frac{\partial}{\partial t} d\mu_t)
    \\&\leq \int_{\Gamma_t}\Big(\lambda^2\Delta|\mathrm{II}^{t}-\mathrm{II}^{\Sigma}|^2+\Delta(\frac{\lambda^2}{2})|\mathrm{II}^{t}-\mathrm{II}^{\Sigma}|^2- \lambda^2 |H|^2|\mathrm{II}^{t}-\mathrm{II}^{\Sigma}|^2+c_9\Big)d\mu_t
    \\&\leq \int_{\Gamma_t} \Big( -\frac{3}{2}\langle \nabla(\lambda^2),\nabla|\mathrm{II}^{t}-\mathrm{II}^{\Sigma}|^2\rangle +c_9\Big)d\mu_t<\infty.
\end{align*}
because $\big|\nabla|\mathrm{II}^{t}-\mathrm{II}^{\Sigma}|\big|^2$ is unifromly bounded.

Now, by claim 1 and claim 2 we can use Lemma 6.3 in \cite{WT2} to derive that
\begin{align*}
    \int_{\Gamma_t}|\mathrm{II}^{t}-\mathrm{II}^{\Sigma}|^2 d\mu_t \to 0.
\end{align*}
Then, it is a standard argument to show that $|\mathrm{II}^{t}-\mathrm{II}^{\Sigma}|\longrightarrow 0$ and $\Gamma^t$ converges to $\Sigma$ smoothly.

\end{proof}

\begin{corollary}
Let $\Gamma$ be a Lagrangian surface with respect to $K$ in the Eguchi-Hanson space, which is $C^1$ close to the zero section $S^2$. Then, the special $H$-flow $\Gamma^t$ with $\Gamma^0=\Gamma$ exists for all time and converges to $S^2$ smoothly.
\end{corollary}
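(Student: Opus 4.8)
The plan is to apply the preceding main Theorem with $M$ equal to the Eguchi--Hanson space and $\Sigma$ equal to its zero section $S^2$, so that the entire content of the corollary reduces to checking that $(M,S^2)$ meets the hypotheses of that Theorem. First I would recall from the discussion of \S3.2 that the Eguchi--Hanson metric is hyperk\"ahler, with the three K\"ahler forms $\omega_I,\omega_J,\omega_K$ written explicitly in the orthonormal coframe $\{\omega^0,\omega^1,\omega^2,\omega^3\}$. The zero section is compact and oriented --- it is the round $S^2$ appearing in the limit $r\to\sqrt[4]{c}$ --- and by the Proposition on the zero section it is totally geodesic and strongly stable, the strong-stability inequality \eqref{st} following from the curvature signs $R_{0110}=R_{0220}=R_{1331}=R_{2332}=-2/\sqrt{c}<0$.

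Next I would pin down the complex-Lagrangian structure on $TS^2=\mathrm{span}\{e_1,e_2\}$. The explicit complex structures give $\omega_J(e_1,e_2)=\overline{g}(Je_1,e_2)=0$ and $\omega_K(e_1,e_2)=\overline{g}(Ke_1,e_2)=0$, whereas $\omega_I(e_1,e_2)=\overline{g}(Ie_1,e_2)=1>0$. Thus $S^2$ is Lagrangian for both $\omega_J$ and $\omega_K$ and is calibrated by $\omega_I$; equivalently, $S^2$ is complex Lagrangian for the holomorphic symplectic form $\omega_J+i\omega_K$.

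The one point that needs care is that the Eguchi--Hanson conventions place the calibrating form at $\omega_I$ and the two vanishing forms at $\omega_J,\omega_K$, whereas the main Theorem is stated with calibrating form $\omega_J$ and vanishing forms $\omega_I,\omega_K$. The remedy is a relabeling of the hyperk\"ahler triple: setting $(I',J',K')=(J,I,-K)$ one checks, using $JI=-K$, that $I'^{2}=J'^{2}=K'^{2}=-\mathrm{Id}$ and $I'J'=JI=-K=K'$, so $(I',J',K')$ is again a hyperk\"ahler structure (the relabeling lies in $SO(3)$ acting on the sphere of compatible complex structures). Under this relabeling the calibrating form becomes $\omega_{J'}=\omega_I$ while the vanishing forms become $\omega_{I'}=\omega_J$ and $\omega_{K'}=-\omega_K$, which is exactly the configuration $\omega_{I'}+i\omega_{K'}\equiv 0$ with $\omega_{J'}$ a volume form required of $\Sigma$ in the Theorem. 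Since $\omega_{K'}=-\omega_K$ vanishes precisely where $\omega_K$ does, the hypothesis that $\Gamma$ be Lagrangian with respect to $K$ is unaffected, and the special $H$-flow is taken in $\mathcal{N}=\{f^*\omega_{J'}=\rho,\ f^*\omega_{K'}=0\}=\{f^*\omega_I=\rho,\ f^*\omega_K=0\}$.

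With all hypotheses of the main Theorem in force --- compact oriented strongly stable complex Lagrangian $\Sigma=S^2$, and $\Gamma$ Lagrangian with respect to $K$ and $C^1$ close to $S^2$ --- its conclusion applies verbatim, yielding that the special $H$-flow $\Gamma^t$ with $\Gamma^0=\Gamma$ exists for all time and converges smoothly to $S^2$. I expect the only obstacle to be the bookkeeping of this relabeling --- matching the calibration and vanishing roles of $\omega_I,\omega_J,\omega_K$ to the Theorem's convention and confirming the quaternionic relations survive --- rather than any new analytic estimate, since the long-time existence and convergence are imported wholesale from the Theorem.
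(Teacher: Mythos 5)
Your proposal is correct and follows the same route the paper intends: the corollary is an immediate application of the main theorem once the zero section of Eguchi--Hanson is known to be a compact, oriented, strongly stable complex Lagrangian surface, all of which is established in \S3.2. Your explicit $SO(3)$ relabeling $(I',J',K')=(J,I,-K)$ reconciling the Eguchi--Hanson convention (calibration by $\omega_I$, vanishing of $\omega_J,\omega_K$) with the theorem's convention (calibration by $\omega_J$, vanishing of $\omega_I,\omega_K$) is a correct and worthwhile clarification of a bookkeeping point the paper leaves implicit.
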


\bibliographystyle{plain}
\bibliography{Reference}
\nocite{*}

\end{document}